\newcommand{\capsize}[1]{\scalebox{1.5}{$#1$}}
\newtheorem{thm}{Theorem} 
\newtheorem{prop}{Proposition}[section]
\newtheorem{lem}[prop]{Lemma}
\newtheorem{defn}{Definition}[section]
\newtheorem{rem}{Remark}[section]
\numberwithin{equation}{section}
\newcommand\R{\mathbb R}
\newcommand\N{\mathbb N}
\newcommand{\be}{\begin{equation} \label}
	\newcommand{\ee}{\end{equation}}
\newcommand{\ds}{\displaystyle}
\newcommand{\ts}{\textstyle}
\def\e{\varepsilon}
\def\e{\varepsilon}
\newcommand{\da}{\partial_a}
\newcommand{\dt}{\partial_t}
\newcommand{\nud}{\frac{1}{2}}
\newcommand{\rp}{{\mathbb{R}_+}}
\newcommand{\ddt}[1]{\frac{d#1}{dt}}
\newcommand{\cE}{{\mathcal E}}
\newcommand{\cS}{{\mathcal S}}
\newcommand{\nrm}[2]{{\lVert#1\rVert}_{#2}}
\newcommand{\D}{{\mathcal D}}
\newcommand{\veps}{u_\e}
\newcommand{\zz}{{z_0}}
\newcommand{\rhostar}{\mu_1}
\newcommand{\rhobar}{\varphi}
\def\e{\varepsilon}
\newcommand{\zeps}{z_\e}
\newcommand{\zp}{{z_p}}
\newcommand{\kernel}{\varrho}
\newcommand{\cL}{{\mathcal L}}
\newcommand{\intrp}{\int_0^\infty}
\DeclareMathOperator*{\argmin}{argmin}
\journalname{}
\begin{document}

\begin{frontmatter}




\title{From transient elastic linkages to friction: \\a complete study of a  penalized fourth order  equation with delay}


\author[1]{Vuk Mili\v{s}i\'c}
\ead{vuk.milisic@univ-brest.fr}
    
\address[1]{Univ. Brest, CNRS UMR 6205 Laboratoire de Math\'ematiques de Bretagne Atlantique, 
		6, avenue Victor Le Gorgeu, 29200, Brest, France}

 \author[2]{Philippe Souplet} 
 \ead{souplet@math.univ-paris13.fr}
\address[2]{Universit\'e Sorbonne Paris Nord, LAGA, CNRS UMR 7539,
	99, avenue Jean-Baptiste Cl\'ement,
	93430, Villetaneuse, France}

\selectlanguage{english} 
\begin{abstract}
\noindent\textbf{(English)}  In this paper we consider a fourth order nonlinear parabolic  delayed problem
modelling a quasi-instantaneous turn-over of linkages in the context
of cell-motility.
The model depends on a small
parameter $\varepsilon$ which represents a typical time scale of the memory effect.
We first prove global existence and uniqueness of solutions for $\varepsilon$ fixed.
This is achieved by combining suitable fixed-point and energy arguments and by uncovering a nonlocal in time,  conserved integral quantity.
After giving a complete classification of steady states in terms of elliptic functions,
we next show that every solution converges to a steady state as $t\to\infty$. 
When $\varepsilon \to 0$, 
we then establish convergence results on finite time intervals, showing 
that the solution tends in a suitable sense
towards the solution of a parabolic problem without delay.
Moreover, we establish the convergence of energies as $\varepsilon \to 0$, 
which 
enables us to show
that, for $\varepsilon$ small enough, 
the $\varepsilon$-dependent 
problem inherits part of the large time asymptotics of the limiting parabolic problem.
\vspace{1em}

\noindent\textbf{(Français)} 
	Dans cet article, nous considérons un problème parabolique avec retard, non linéaire et du quatrième ordre en espace, modélisant le renouvellement quasi-instantané  des liaisons élastiques dans le contexte de la motilité cellulaire. 
	Le modèle dépend d'un petit paramètre $\e$ qui représente une échelle de temps typique de l'effet de mémoire. 
	Nous commençons par prouver l'existence globale et l'unicité des solutions pour $\e$ fixé. 
	Nous y parvenons en combinant des arguments de point fixe et d'énergie appropriés et en exhibant une nouvelle quantité 
	intégrale conservée mais non locale en temps. 
	Après avoir donné une classification complète des états stationnaires en termes de fonctions elliptiques, 
	nous montrons que chaque solution converge vers un état stationnaire lorsque $t\to \infty$. 
	Lorsque $\e \to 0$, nous établissons alors des résultats de convergence pour des intervalles de temps finis, 
	montrant que la solution tend dans un sens approprié vers la solution d'un problème parabolique sans retard. 
	De plus, nous établissons la convergence des énergies quand $\e$ tend vers 0, 
	ce qui nous permet de montrer que, pour $\e$  suffisamment petit, 
	le problème $\e$-dépendant hérite d'une partie de l'asymptotique en temps grand du problème parabolique limite.

\end{abstract}

\begin{keyword}
  integro-differential equations, 
  cell motility and adhesion,
  nonlinear penalized pointwise constraint, 
  limit sets and steady states, 
  fourth order elliptic and parabolic problems.


  \MSC 35B40, 35B25, 35B45, 
   45K05, 35K55, 35K35, 35Q80
\end{keyword}

\end{frontmatter}
\selectlanguage{english} 
\newpage


\section{Introduction}
This article is a contribution to the mathematical study of adhesion forces in the context of cell motility,
in continuation to the project \cite{MiOel.1,MiOel.2,MiOel.3,MiOel.4}. Cell adhesion and migration play a crucial role in many biological phenomena such as embryonic development, inflammatory responses, wound healing and tumor metastasis. 
The main motivation comes from the seminal papers \cite{OeSchVi,OeSch}, where the authors
built a complex and realistic model of the Lamellipodium. It is a cytoskeletal quasi-two-dimensional actin mesh,
and the whole structure propels the cell on the substrate.
 The model has the potential to include the effects of (de)polymerization, of the mechanical
	effects of cross-linking, bundling, and motor proteins, of cell-substrate adhesion, as well as of
	the leading edge of the membrane. As the (de)polymerisation and the edge of the membrane add even more mathematical complexity 
	we simplify the problem and consider the others mechanical effects except these. 
 In this setting, the authors of \cite{OeSchVi,OeSch} consider an axi-symmetric idealization
of the network. It is represented by two families of  inextensible filaments (clockwise and anti-clockwise)
interacting with each other. Thanks to these hypotheses, 
the problem reduces to  a single equation whose solution is the position $z(s,t)\in\R^2$
of a single filament
evolving in time $t\in \rp$ and with respect to a reference configuration 
$s \in (0,L)$. 
Using gradient flow techniques, the authors obtain a  nonlinear equation:
\begin{equation}\label{eq.starting.pbm}
	\left\{ 
	\begin{aligned}
		&	\underbrace{   z_0''''}_{\text{bending}}-
		\underbrace{( \lambda  z_0')'}_{\text{unextensibility}} + \underbrace{
			\mu^A D_t z_0 }_{\text{adhesion }} 
		+	
		\underbrace{
			\left( 
			\left(
			\arccos
			\left(\left| z_0 \right|'\right)-\varphi_0
			\right)  
			(z_0')^{\bot}
			\right)'
		}_{\text{twisting}}
		+ 
		\underbrace{
			D_t \varphi
			z^\bot_0 
		}_{\text{stretching}}
		=0 \\
		& 	\left| \partial_s z_0\right|= 1,\\ 
	\end{aligned}
	\right.
\end{equation}
where $\varphi = \arg z_0$ and $z^\bot_0=(-z_{0,2},z_{0,1})^T$ and $'$ denotes the space derivative (with respect to $s$). The Lagrange multiplier $\lambda(s,t)$ accounts for the inextensibility constraint and is an unknown of the problem.
They complement this equation 
with initial and  natural boundary conditions that we omit for sake of conciseness.
The adhesion and stretching friction terms appearing in this force balance equation
were obtained as formal limits of a microscopic description of adhesion mechanisms.
For instance the first friction term 
is obtained as the limit when $\e$, a small dimensionless parameter, goes to zero in the expression:
$$
\frac{1}{\e} \intrp \left(z_\e (s,t) - z_\e (s+\e a,t-\e a)\right) \kernel(s,a,t) da \to \mu_1 (s,t) \left( \dt - \partial_s \right) z_0(s,t) =: \mu^A D_t z.
$$
Here $a$ represents the age of a linkage established with the previous locations
of the filament and the shift in the reference configuration $s+ \e a$ comes from 
the (de)polymerization of the filaments inside the lamellipodium. 
The  parameter $\e$ represents at the same time a characteristic age of the linkages, 
and  the inverse of their stiffness.
The elongation $(z(s,t)-z(s+\e a,t-\e a))/\e$, for a fixed $a$, is the linear elastic force 
exerted on $z(s,t)$ by a linkage established at $z(s+\e a,t-\e a)$ (see also 
\cite{oelz_schmei_book,MR3385931} for an extension to the non-axi-symmetric case). 

The type of system can be considered as a model for the motion of elastic, inextensible rods in
a high friction regime \cite{oelz_schmei_book}. Currently the mathematical modelling of biopolymers and biopolymer
networks is a field of high scientific interest and elastic rod models have recently also been used
for the modelling of the DNA \cite{balaeff2006modeling,bijani2006anisotropic}. 
The primary motivation for these studies
is to obtain qualitative results of these models which have the potential to give
insight into the behaviour of the respective biological systems.

More than ten years ago, our initial goal was to 
give a rigorous justification of the limit when $\e$ goes to zero in the previous 
system leading to \eqref{eq.starting.pbm},  but the nonlinear
space-dependent feature in \eqref{eq.starting.pbm} made it out of reach at that time.
A first attempt to cope with the fully nonlinear space-dependent 
problem was performed in \cite{Mi.5}.
In this latter article the first author considered delayed harmonic maps 
($z''''$ is replaced by $z''$ and the constraint $|z'(s,t)|=1$ is replaced by $|z(s,t)|=1$ for a.e.~$s \in (0,L)$)
and showed  rigorously the asymptotics with respect to $\e$. 
In that
work a key feature is compactness in time. When compared with the classical
parabolic problem without delay \cite{oelz.sema}, compactness is far more difficult to obtain.
Indeed, in the standard case, using minimizing movements \cite{AmGiSa}, 
a variational principle provides this estimate first, 
and convergence with respect to the discretisation step of the  semi-discrete scheme in time 
is then immediate. This provides in turn existence (and uniqueness) for 
the continuous problem \eqref{eq.starting.pbm} for instance (this was exactly 
the method used in \cite{OeSch} when starting from classical {\em minimizing movements} {\em \`a la} De Giorgi \cite{AmGiSa}). 

The present work
is the first attempt in order to handle the fourth order case from \cite{OeSch},
 and this turns out to be quite challenging mathematically.
Because the geometric structure (related to the constraint) of the problem 
is different, results from \cite{Mi.5} do not apply.
Here we consider a penalization of the constraint $|z'(s,t)|=1$ for a.e.~$t>0$ and all $s \in I$.
Namely we
replace it by a non-convex potential $F_\delta(z'(s,t)) = \delta^{-1}(1-{z'}^{2}(s,t))^2$ in the variational principle.
When $\delta$ becomes small the solution $z^{\delta}$ should in some sense converge to the
solution of the constrained problem above \cite{oelz.sema}.
These hypotheses lead to study a delayed
parabolic model of Allen-Cahn type (but of fourth order in space) where the penalization term applies to $z'$, that we detail 
in the next subsection.

For the usual Allen-Cahn equation $u_t-\Delta u=\delta^{-1}u(1-u^2)$
(second order in space, local in time and with double-well potential nonlinearity acting on $u$ itself),
the long time and/or singular perturbation behaviors have been studied in, e.g., the classical works 
\cite{AC79, BK91, Ch92, MS95, AlHiMa}. As for the
literature on linear and non-linear, nonlocal in time Volterra-type equations, it is vast in finite dimension
	(cf.~\cite{Gripenberg_ea} and references included). However, when the space variable is added as well as
	for instance elliptic operators \cite{DiVaVe,DuTu,AlHiMa,NguCaTu}) 
	the time operators considered are often either fractional derivatives or delay operators
	independent of these space variables. Exceptions are papers 
	where the fractional order depends on the space variable; see for instance \cite{BoHeZa, ZW21}. 
	 In the problem that we consider here, 
	the density of linkages $\varrho$ depends on the space variable as well, which for instance forbids
	any kind of space-time variable separation. This assumption allows to take into account adhesion forces related for example to the properties of the substrate or specific features of the proteins involved, whereas on the mathematical 
	side it creates some more complications when analyzing the equations. Moreover, the non-linearity 
    that we consider concerns first order derivatives in space which is original and to our knowledge
    poorly understood \cite{Oelz.CMS}.

\subsection{The model}
We define $I := (0,L)$, with $L>0$, to be a segment of the real line.
We now assume that the filament at time $t$ is described by the 
	graph of a planar curve $I\ni s\mapsto z(s,t)\in\R$.
	First consider the time dependent energy (depending on the past positions of the filament),
	defined by
	\be{energy-kappa}
\begin{aligned}
		\tilde\cE_t (w) := \frac{1}{2 \e}& \int_I \intrp  \bigl( w(s) - \zeps(s,t-\e a)\bigr)^2 \kernel(s,a) da ds
 	+ \int_I \kappa^2(w(s)) ds + \frac{1}{\delta}\int_I \tilde F(w'(s)) ds.
\end{aligned}
	\ee
	Here $\e, \delta, \ell>0$ are parameters, the double well potential reads $\tilde F(\xi):= (\xi^2-\ell^2)^2$,
	$$\kappa(w):=\frac{w''}{(1+|w'|^2)^{3/2}}$$
	is the curvature, $\rho$ is a given kernel and $\zeps(s,t) = \zp(s,t)$ for $s \in I$ and for negative times,
	with a given past data $\zp$.
	The $\tilde F$-term in \eqref{energy-kappa} is a penalization taking into account the limited
	extensibility of the filament, whereas
	the $\kappa$-term in \eqref{energy-kappa} accounts for the bending energy.
	In this paper we shall actually consider the simplified case where $\kappa(w)$ is replaced by $w''$.
	We will also assume $\delta=1$ and (with no loss of generality up to proper normalization) $\ell=1$.
	This leads to the energy functional
	\be{energy0}
	\begin{aligned}
		\cE_t (w) := \frac{1}{2 \e}& \int_I \intrp  \bigl( w(s) - \zeps(s,t-\e a)\bigr)^2 \kernel(s,a) da ds  +
		\frac12 \int_I |w ''(s)|^2 ds + \int_I F(w'(s)) ds.
	\end{aligned}
	\ee
	 Here and throughout this paper, $F$ denotes the double-well potential
	$$
	F(\xi):= (\xi^2-1)^2,\quad\hbox{with } F'(\xi)=4\xi(\xi^2-1).
	$$
	Although this will to some extent simplify the treatment, 
	this captures the essential mathematical features of the problem under study, namely 
	the conjunction of a fourth-order operator, a time delay operator and an Allen-Cahn type nonlinearity 
	acting on the space derivative.
	Here we will focus on the vanishing memory limit $\e\to 0$,
	leaving the inextensible limit $\delta\to 0$ for future investigations (in \cite{Oelz.CMS} another penalty
		method on the constraint was performed 
		but with a classical time derivative instead of our non-local time operator, 
		 and the techniques there do not apply here).
		As for the case \eqref{energy-kappa}, it presents lots of additional difficulties due to the fully nonlinear nature of the principal elliptic part, and it is not at all clear at this point if our techniques would allow to carry out such a complete analysis as for \eqref{energy0}. In view of  the novelty of problem \eqref{energy0} and of the already serious mathematical difficulties arising therein,
		we have thus preferred to leave the case \eqref{energy-kappa} for future study. Finally,
	the even more difficult case when the filament is described by a curve which is not a graph
	seems completely out of reach for the moment.

	Given the energy in \eqref{energy0}, we start from the formal variational minimization principle:
$$
\zeps (t) := \argmin_{w\in H^2(I)} \cE_t(w),\quad t>0.
$$
Define the delay operator $\cL_\e$ by
$$\begin{aligned}
	\cL_\e [z_\e](s,t)
	&= \frac{1}{\e} \int_0^{\infty} \bigl(z_\e(s,t)-z_\e(s,t-\e a)\bigr) \kernel(s,a) da,
\end{aligned}$$
which can be seen as a time derivative with memory effect and compared to a fractional derivative
	of order greater or equal to one.
The above minimization procedure leads 
to the variational problem: find $z_\e=z_\e(s,t)$, with $z_\e({\cdot},t)$ valued in $H^2(I)$,
solving 
\be{mainweak}
\left\{\begin{aligned}
	& \int_I \cL_\e [z_\e](t) v ds+\int_I \bigl(\zeps''(t) v'' + F'(\zeps'(t)) v'\bigr)ds 
	= 0,& \forall v\in H^2(I),\ t>0,\\
	&z_\e(t)=z_p(t), & t<0.
\end{aligned}
\right.
\ee
Here and throughout the paper we omit the variable $s$ when no risk of confusion arises.
The strong form of the integral equation in the latter problem reads:
\be{mainstrong1}
\cL_\e [\zeps] + \zeps '''' - \left(F'(\zeps')\right)' = 0,\quad s\in I,\ t>0,
\ee
complemented by the corresponding natural boundary conditions:
\be{mainstrong2}
\zeps'''-F'(\zeps') =  \zeps'' = 0,\quad s\in\partial I,\ t>0.
\ee

\subsection{Assumptions and notation}
In our main results we will assume that the given kernel $\rho$ satisfies
\be{hyppbm1loc}
0\le\, \rho(s,a)\in L_a^\infty(0,\infty;H^2(I))\cap L_a^1(0,\infty;H^2(I)),
\ee
\be{eq.kernel0}
\rho\in W_a^{1,1}(0,\infty;L^\infty(I)), \quad \partial_a\rho\le 0\ \ a.e.,
\ee
\be{hyppbmloc10}
0< \mu_{\min}\le \mu(s):=\int_\R \rho(s,a) da,\quad s\in I,
\ee
\be{hyp10}
\int_I \int_{a=0}^\infty a^{3/2}\|\rho(\cdot,a)\|_{2}  da <\infty,\quad 
 \int_0^\infty a\|\rho(\cdot,a)\|_{\infty} da <\infty.
\ee
For some results, we will need the following mild coercivity assumption on the kernel:
	\be{hyprhoa}
	\int_I\int_0^\infty \rho^2(s,a)| \partial_a \rho(s,a)|^{-1} dads<\infty. 
	\ee
	Note that assumption \eqref{hyp10} is for instance satisfied if
	$0\le \rho(s,a)\le K(1+a)^{-m}$ with some constants $m>\frac{5}{2}$, $K>0$,
	and that \eqref{hyprhoa} then holds if in addition $-\partial_a\rho(s,a)\ge c(1+a)^{-4}$ with $c>0$. 

As for the past data, we will assume
\be{hyppbm1locz0}
z_p\in W^{1,\infty}(-\infty,0;H^2(I)).
\ee
In connection with the kernel, we define the useful functions
\be{Sndef2}
\rhobar(s,\tau)=\int_\tau^\infty \rho(s,a)da,
\quad
\rhostar(s)=\int_0^\infty a\rho(s,a)da=\int_{\tau=0}^\infty\rhobar(s,\tau) d\tau\ee
and, for given past data $z_p$ and all $\e\ge 0$, we define the following constants
\be{defkappae}
\kappa_\e=\int_{\tau=0}^\infty \int_Iz_p(s,-\e \tau)\rhobar(s,\tau)ds d\tau.
\ee
Throughout the paper, the $L^2(I)$  scalar product  and the $L^p(I)$ norm ($1\le p\le\infty)$ will be 
respectively denoted by $(\cdot,\cdot)$  and $\|\cdot\|_p$.
 The Sobolev-Slobodecki spaces (over $(0,T)$ or $I$, possibly fractional and vector valued)
	will be denoted by $W^{k,p}$ for $k\in[0,\infty)$, $p\in[1,\infty]$
	and we will set $H^k=W^{k,2}$.
	Spaces like $L^2(I), H^k(I)$ will be often abbreviated by $L^2, H^k$.
We will sometimes omit the subscript $\e$ when no confusion may arise.
Throughout this paper, $C$ will denote a generic positive constant
	{\it depending only on $\rho, L$.}

\smallskip

	\subsection{Steady states and limiting problem}
	First, in order to motivate our results and explain their significance, 
	we need to  introduce the stationary problem associated with \eqref{mainstrong1}
	and the limiting problem as $\e\to 0$.
	The parameter $\e$ in \eqref{mainstrong1} represents a typical time scale of the memory effect,
	which gradually ``fades out'' as $\e$ becomes smaller.
	At a {\it formal} level, 
	the limiting problem to \eqref{mainstrong1} as $\e\to 0$ is given by the following parabolic problem  without delay: 
	\be{mainstrong1-0}
	\left\{\begin{aligned}
		b(s)\partial_t z_0 + z_0'''' - \bigl(F'(z_0')\bigr)' &= 0,&s\in I,\ t>0, \\
		z_0'''-F'(z_0') =  z_0'' &= 0,&s\in\partial I,\ t>0,\\
		z_0(s,0)&=\phi(s),&s\in I,
	\end{aligned}
	\right.
	\ee
	where $b=\rhostar$ is the first moment of $\rho$, defined in \eqref{Sndef2}, and $\phi=z_p(\cdot,0)$.
	We note that \eqref{mainstrong1} and \eqref{mainstrong1-0} have the same steady states, namely the solutions of the
	stationary problem
	\be{eqstatz}
	\left\{\hskip 2mm\begin{aligned}
		Z'''' - \bigl(F'(Z')\bigr)' = 0,&\quad& s\in I, \\
		F'(Z')-Z''' = Z''= 0,&\quad&s \in \partial I.
	\end{aligned}
	\right.
	\ee
	As a preliminary to the rest of our analysis, we first give the complete classification of these steady states 
	(see Section~\ref{SecSS} for precise definition and proof).
	
	\begin{prop} \label{propeqstatw0} \hspace{2em}
		\begin{itemize}
			\item[(i)]Assume $L\le\pi/2$. Then, up to additive constants,
			problem \eqref{eqstatz} admits only the affine solutions 
			$Z_0\equiv 0$ and $Z_{\pm 1}\equiv \pm x$.

			\item[(ii)]Assume $L>\pi/2$ and let $m=\lfloor2L/\pi\rfloor\in N^*$. Beside the affine solutions, 
			up to additive constants,
			\eqref{eqstatz} admits exactly $2m$ nonaffine solutions, namely 
			$Z_2,\cdots,Z_{m+1}$, and their opposites $Z_{-i}=-Z_i$.
		\end{itemize}
	\end{prop}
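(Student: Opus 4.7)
My plan is to reduce the fourth-order problem to a second-order ODE of pendulum type, and then classify nonconstant solutions via their (half-)period, computed explicitly in terms of Jacobi elliptic functions.

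\textbf{Step 1: reduction to a second-order Neumann problem.} Integrating \eqref{eqstatz} once over $(0,s)$ and using the boundary condition $F'(Z')-Z'''=0$ at $s=0$, the constant of integration vanishes, so $Z'''=F'(Z')$ on $I$. Setting $u=Z'$, the remaining condition $Z''=0$ at $\partial I$ becomes $u'(0)=u'(L)=0$, hence $u$ solves the Neumann problem
\begin{equation*}
u''=F'(u)=4u(u^2-1)\ \text{on}\ (0,L),\qquad u'(0)=u'(L)=0.
\end{equation*}
Conversely, any such $u$ gives rise, by integration, to a solution $Z$ of \eqref{eqstatz} determined up to an additive constant. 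It therefore suffices to classify the solutions of this Neumann problem.

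\textbf{Step 2: constant solutions.} The zeros of $F'$ are $u=0,\pm1$, yielding constant solutions $u\equiv 0$ and $u\equiv\pm 1$; integrated back, these give precisely the affine solutions $Z_{0}\equiv\mathrm{const}$ and $Z_{\pm1}=\pm s+\mathrm{const}$ listed in the statement.

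\textbf{Step 3: nonconstant solutions.} The ODE $u''=F'(u)$ is conservative with first integral $H=\tfrac{1}{2}(u')^{2}-F(u)$. Since $F''(0)=-4$ and $F''(\pm1)=8$, the origin is a center and $(\pm 1,0)$ are saddles; the level $H=0$ contains the saddles together with heteroclinic orbits, and for $H\in(-1,0)$ the level set is a closed orbit around the origin on which $u$ oscillates between $\pm u_{H}$ with $F(u_{H})=-H$, $0<u_{H}<1$. A nonconstant Neumann solution must lie on such an orbit, and since $u'$ vanishes exactly twice per period (at $\pm u_H$), the condition $u'(L)=0$ is equivalent to $L=j\,T(H)/2$ for some $j\in\mathbb{N}^{*}$, where $T(H)$ is the period. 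Because $F$ is a quartic with two equal wells, the orbit is parametrized explicitly by a Jacobi $\mathrm{sn}$: one verifies that, up to the symmetry $u\mapsto -u$,
\begin{equation*}
u(s)=\frac{\sqrt{2}\,k}{\sqrt{1+k^{2}}}\,\mathrm{sn}\!\left(\frac{2s}{\sqrt{1+k^{2}}}+K(k),\,k\right),\qquad k\in(0,1),
\end{equation*}
solves the ODE with $u'(0)=0$, and the distance between consecutive zeros of $u'$ is $g(k):=K(k)\sqrt{1+k^{2}}$. Thus $u'(L)=0$ holds iff there exists $j\in\mathbb{N}^{*}$ and $k\in(0,1)$ with $g(k)=L/j$.

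\textbf{Step 4: counting and obstacle.} The key point is that $g\colon(0,1)\to(\pi/2,+\infty)$ is a strictly increasing bijection, with $g(0^{+})=\pi/2$ (small-amplitude limit, period $\pi$ of the linearized equation $v''=-4v$) and $g(1^{-})=+\infty$ (heteroclinic limit). This follows from the classical monotonicity of $K(k)$ together with $k\mapsto\sqrt{1+k^{2}}$, and is where the proof really hinges: monotonicity of the period function for quartic wells is delicate in general, but for this specific $F$ it reduces to elementary properties of $K$. Consequently, for each $j\in\mathbb{N}^{*}$ the equation $g(k)=L/j$ has a (unique) root iff $L/j>\pi/2$, i.e.\ $j<2L/\pi$. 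If $L\le\pi/2$ no such $j$ exists, proving (i). If $L>\pi/2$, the admissible indices are $j=1,\dots,m$ with $m=\lfloor 2L/\pi\rfloor$, yielding $m$ distinct nonaffine profiles $u_{j}$; integrating, we obtain $m$ nonaffine solutions $Z_{2},\dots,Z_{m+1}$, and the reflection $u\mapsto -u$ gives their opposites $Z_{-i}=-Z_{i}$, for a total of $2m$ nonaffine solutions modulo additive constants, as claimed in (ii).
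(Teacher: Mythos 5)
Your proof is correct and follows essentially the same route as the paper: reduce \eqref{eqstatz} to the second-order Neumann problem for $u=Z'$, write the nonconstant solutions explicitly in terms of the Jacobi $\mathrm{sn}$ function, and count via the strict monotonicity and range $(\pi/2,\infty)$ of the half-period function. Your parametrization by the modulus $k$ rather than by the shooting amplitude $a$ (they are related by $k=a/\sqrt{2-a^2}$) gives $g(k)=K(k)\sqrt{1+k^2}$, and your observation that monotonicity is immediate as a product of two increasing functions is a small simplification over the paper's explicit computation of $\Pi_a'$ as a positive integral; otherwise the two arguments coincide.
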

	
	As for the,  more classical, parabolic problem without delay \eqref{mainstrong1-0}, 
	we have the following result
	 (see Definition~\ref{def-auxilf} and Remark~\ref{rem-auxilf} for the precise notion of solution).

	\begin{prop}\label{prop-pbm0}
		Let $b\in H^2(I)$ with $\inf_I b>0$ and $\phi\in H^2(I)$.
		
		\smallskip
		
		(i) Problem \eqref{mainstrong1-0} has a unique, global solution
		\be{regul-0}
		z_0\in  C_b([0,\infty);H^2(I))\cap H^1_{loc}([0,\infty);L^2(I))\cap L^2_{loc}([0,\infty);H^4(I)).
		\ee
		 Moreover, for each $p\in(2,\infty)$, we have
		\be{regul-01}
		 \sup_{t\ge 1} \ \left(\,\|z_0\|_{W^{1,p}(t,t+1;L^2(I))}+\|z_0\|_{L^p(t,t+1;H^4(I))}\,\right)<\infty.
		\ee
		
		(ii) There exists a stationary solution $Z_0$, with  $(Z_0,b)=(\phi,b)$, such that
		\be{cv-quasistat0-0}
		\lim_{t\to \infty} \|z_0(t)-Z_0\|_{H^2(I)}=0.
		\ee
	\end{prop}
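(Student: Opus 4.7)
The plan is to exploit the gradient flow structure of \eqref{mainstrong1-0}: the equation is the formal gradient flow of
\[
\mathcal{E}_0(w) := \frac{1}{2}\int_I |w''|^2\,ds + \int_I F(w')\,ds
\]
with respect to the weighted inner product $\langle u,v\rangle_b := \int_I b\,uv\,ds$. Two identities drive the whole analysis: testing the weak formulation with $v \equiv 1$ gives the conservation law
\[
(z_0(t),b) = (\phi,b),\qquad t \ge 0,
\]
while testing (formally) with $v = \partial_t z_0$ gives the dissipation identity
\[
\frac{d}{dt}\mathcal{E}_0(z_0(t)) + \int_I b\,|\partial_t z_0|^2\,ds = 0.
\]
Part~(i) is obtained by combining these with standard parabolic techniques; Part~(ii) relies on LaSalle's invariance principle together with the classification of steady states in Proposition~\ref{propeqstatw0}.

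\emph{Part (i).} I would construct a local-in-time solution via a Galerkin scheme in a basis of eigenfunctions of the fourth-order operator with the natural boundary conditions, using that $w\mapsto (F'(w'))'$ is locally Lipschitz from $H^2(I)$ into $H^{-1}(I)$ (as $F$ is a polynomial and $H^2\hookrightarrow C^1$). The dissipation identity gives $\mathcal{E}_0(z_0(t))\le \mathcal{E}_0(\phi)$; since $F\ge 0$, this uniformly bounds $\|z_0''\|_2$, and together with the conservation law and $\inf_I b>0$ a Poincar\'e-type inequality yields a uniform $H^2(I)$ bound, hence global existence and $\partial_t z_0 \in L^2(0,\infty;L^2(I))$. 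Rewriting the equation as
\[
b\,\partial_t z_0 + z_0'''' = F''(z_0')\,z_0''
\]
and using $H^2(I)\hookrightarrow C^1(\bar I)$ to control $F''(z_0')$ in $L^\infty$ and $z_0''$ in $L^\infty_t L^2_s$, the right-hand side lies in $L^\infty_t L^2_s$; parabolic $L^p$ maximal regularity on unit time intervals $(t,t+1)$ then yields \eqref{regul-01}, and in particular the regularity in \eqref{regul-0}. For uniqueness, I would subtract two solutions $z := z_1 - z_2$ and test with $v = z$: the boundary terms from the $z''$-integration by parts vanish thanks to the natural conditions, the nonlinear contribution is bounded by $M\|z'\|_2^2 \le \eta\|z''\|_2^2 + C_\eta\|z\|_2^2$ via the interpolation $\|z\|_{H^1}^2 \le C\|z\|_{H^2}\|z\|_{L^2}$, and a Gronwall argument on $\int_I b z^2$ gives $z\equiv 0$.

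\emph{Part (ii).} Conservation forces any possible limit to satisfy $(Z_0,b) = (\phi,b)$. The dissipation identity yields $\mathcal{E}_0(z_0(t)) \downarrow \mathcal{E}_\infty$ and $\int_0^\infty\!\int_I b\,|\partial_t z_0|^2\,ds\,dt <\infty$. Combined with the uniform $H^2$ bound, Aubin--Lions applied on unit time intervals (using \eqref{regul-01}) shows that, for every sequence $t_n\to\infty$, the translates $\tau\mapsto z_0(t_n+\tau)$ are precompact in $C([0,1];H^2(I))$; the vanishing of the dissipation on $(t_n,t_n+1)$ forces any limit to be independent of $\tau$, and passing to the limit in the weak equation identifies it with a solution of \eqref{eqstatz}, so $\omega(z_0)$ consists of steady states. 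The decisive observation is that, by Proposition~\ref{propeqstatw0}, there are only finitely many steady states modulo additive constants, and the affine constraint $(Z,b)=(\phi,b)$ uniquely selects the constant in each equivalence class since $(1,b) = \int_I b > 0$; hence $\omega(z_0)$ lies in a finite set. As $t\mapsto z_0(t)$ is continuous in $H^2(I)$ with relatively compact trajectory, $\omega(z_0)$ is nonempty, compact and connected, so it reduces to a single point $Z_0$, yielding \eqref{cv-quasistat0-0}.

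\emph{Main obstacle.} The most delicate step is upgrading subsequential convergence to the full convergence stated in \eqref{cv-quasistat0-0}. Finiteness of the steady state set from Proposition~\ref{propeqstatw0} circumvents the need for a Lojasiewicz--Simon gradient inequality, but requires orbit precompactness in the strong $H^2$ topology, not merely in $C^1$ via Rellich--Kondrachov; this is precisely why the parabolic maximal regularity estimate \eqref{regul-01} -- giving $z_0\in L^p_{loc}(H^4)\cap W^{1,p}_{loc}(L^2)$ for some $p>2$ -- is built into Part~(i) and is essential for closing Part~(ii).
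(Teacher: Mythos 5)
Your Part~(ii) and your uniqueness and energy arguments are essentially the paper's (conservation of $(z_0,b)$, energy dissipation, $\omega$--limit set is connected and consists of steady states, and finiteness of the steady states from Proposition~\ref{propeqstatw0} closes the argument). Your existence construction via Galerkin is a genuinely different route from the paper's, which instead obtains $z_0$ as the $\e\to 0$ limit of the delayed problem with the ad hoc kernel $\rho(s,a)=e^{-a}b(s)$ and constant past data $z_p\equiv\phi$, thereby reusing Lemma~\ref{lem.eps.zero}; both are viable.

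There is, however, a real gap in your derivation of \eqref{regul-01}. When you rewrite the equation as $b\,\partial_t z_0 + z_0'''' = F''(z_0')z_0''$ and declare that ``parabolic $L^p$ maximal regularity on unit intervals then yields \eqref{regul-01}'', you are implicitly treating the problem as though it carried homogeneous boundary conditions $z_0''=z_0'''=0$. It does not: the natural boundary condition attached to \eqref{mainstrong1-0} is the \emph{nonlinear, inhomogeneous} condition $z_0'''=F'(z_0')$ on $\partial I$. A solution of your homogeneous-boundary surrogate is a different object, and standard maximal $L^p$ regularity for the homogeneous problem does not apply to $z_0$. Handling this is precisely the technical point the paper singles out (``especially in view of the needed time regularity, in presence of nonlinear boundary conditions''), and is the content of Lemma~\ref{lemregulpq}: one must invoke the Denk--Hieber--Pr\"uss maximal regularity theory for \emph{inhomogeneous} boundary data, and verify that the boundary trace $B f$ with $f=F'(z_0')$ lies in the required fractional space $W^{\theta,p}(0,T;\R^2)$ with $\theta>1/8$. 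That verification is not automatic; it is obtained from the $p=2$ regularity $z_0\in H^1(0,T;L^2)\cap L^2(0,T;H^4)$ via space--time interpolation $z_0\in H^{1-\nu}(0,T;H^{4\nu})$ and the Sobolev embedding $H^{4\nu}(I)\hookrightarrow W^{1,\infty}(I)$ for $\nu>3/8$. Your argument as written skips this entire chain, so it does not establish \eqref{regul-01}.

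A secondary consequence: since your Part~(ii) explicitly leans on the strong precompactness in $H^2(I)$ that you derive from \eqref{regul-01}, the gap propagates there as well. Once the trace-regularity step is supplied (or once one adopts the paper's alternative of inheriting the uniform $H^{1/6}(0,T;H^{5/2})$ bound \eqref{concl-compact0} from the $\e$-family), the rest of your Part~(ii) is correct and coincides with the paper's.
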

	
	Although Proposition~\ref{prop-pbm0} may be known, we have been unable to find a suitable reference,
		especially in view of the needed time regularity, in presence of nonlinear 
		boundary conditions.
		We thus provide a proof in  \ref{Secz0}, 
		based on maximal regularity estimates from \cite{DHP} for inhomogeneous, linear higher order 
		parabolic problems.
	
	\subsection{Main results}
	
		\vskip 1pt
		
	The main goals of this work are to show that:
	
	\vskip 2pt
	
	$\bullet$ For sufficiently small $\e>0$,
	the time-nonlocal problem \eqref{mainstrong1} is globally well posed
	and its solution converges to one of the steady states as $t\to\infty$.
	
	\vskip 2pt
	
	$\bullet$ When $\e\to 0$, the solution $z_\e$ of \eqref{mainstrong1} converges 
	in a certain sense to the solution $z_0$ of 
	problem \eqref{mainstrong1-0}.
	\vskip 2pt
	
	$\bullet$ For sufficiently small $\e>0$,
	problem \eqref{mainstrong1} inherits part of the large time asymptotic properties of \eqref{mainstrong1-0}.
	
	\vskip 2pt
	Although this may seem reasonable, this is by no means obvious, in view of the nonlocal nature of \eqref{mainstrong1},
	and will require rather involved arguments.

	\goodbreak

	\begin{thm} \label{thmglob}
		Assume \eqref{hyppbm1loc}--\eqref{hyp10} and \eqref{hyppbm1locz0}.
		There exists $\e_0>0$ depending only on $\rho,  L$ and on
		$\|z_p\|_{L^\infty(-\infty,0;H^2(I))}$ such that,
		for all $\e\in(0,\e_0]$, the following holds:
		
		\smallskip
		(i) Problem \eqref{mainweak} admits a unique global solution $z_\e\in L^\infty(\R;H^2(I))$.
		
		\smallskip
		(ii) The curve $\e \mapsto z_\e(t)$ is locally $1/4$-H\"older continuous from $(0,\e_0]$ into $H^2(I)$, 
		uniformly for $t>0$ in bounded intervals.
		\smallskip
		
		(iii) There exists a stationary solution $Z_\e$, with $(Z_\e,\rhostar)=\kappa_\e$, such that,	
		\be{convH2b}
		z_\e(t)\to Z_\e \hbox{ in $H^2(I)$, as $t\to\infty$.}
		\ee
		
		(iv) The set of $\e$ such that $Z'_\e\equiv 1$ (resp.~$-1$) is relatively open in $(0,\e_0]$.
	\end{thm}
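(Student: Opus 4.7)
I would first construct a local-in-time solution by a Banach fixed point argument in $C([0,\tau];H^2(I))$ for $\tau$ small. Splitting $\cL_\e[z_\e](s,t) = (\mu(s)/\e)\,z_\e(s,t) - \e^{-1}\int_0^\infty z_\e(s,t-\e a)\rho(s,a)\,da$, the second part depends only on the known past data for $t < \e a$; combined with the Lipschitz character of $F'$ on $H^2(I) \hookrightarrow W^{1,\infty}(I)$ and standard maximal regularity for the fourth order operator with natural boundary conditions, this gives a contraction. For global continuation, two estimates are needed. The first is energy dissipation, obtained by formally testing with $\partial_t z_\e$: the delay term contributes $\int_I (\mu/\e)(\partial_t z_\e)^2 ds$ minus a past-data error controlled via the kernel moments \eqref{hyp10}, while the elliptic and nonlinear terms produce the time derivative of $\tfrac{1}{2}\|z_\e''\|_2^2 + \int_I F(z_\e')$. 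The second is the nonlocal-in-time conservation law obtained by testing with the admissible constant $v \equiv 1$: boundary terms cancel by \eqref{mainstrong2}, leaving $\int_I \cL_\e[z_\e](t)\,ds = 0$; integrating in $t$ and unwinding the delay yields a moving-average identity whose large-$t$ limit is $(z_\e,\rhostar) = \kappa_\e$. These two estimates, with $\e_0$ small enough to absorb past-data errors in the energy inequality, give a uniform-in-$t$ $H^2$-bound and hence global existence and uniqueness.

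\textbf{Part~(ii).} Setting $w := z_{\e_1} - z_{\e_2}$ and subtracting the equations yields
\[
\cL_{\e_1}[w] + w'''' - \bigl(F'(z'_{\e_1}) - F'(z'_{\e_2})\bigr)' = (\cL_{\e_2} - \cL_{\e_1})[z_{\e_2}].
\]
The right-hand side vanishes with $|\e_1 - \e_2|$ thanks to the cancellation $\cL_\e \to \rhostar\partial_t$ as $\e \to 0$ and the time regularity of $z_{\e_2}$ from (i), at a rate equal to a fractional power of $|\e_1-\e_2|$ in a suitable $L^p_t L^2_s$ norm. Testing with $w$, using the uniform $H^2$-bound of (i) to Lipschitz-control the $F'$ difference, and applying Gronwall leads to $\|w\|_{L^\infty(0,T;H^2)} \lesssim |\e_1-\e_2|^{1/4}$ on bounded time intervals, the $1/4$ exponent arising from the interplay between the convergence rate of the source and the Gronwall exponential.

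\textbf{Part~(iii).} I would use the stationary part $\cE^{\mathrm{st}}(w) := \tfrac{1}{2}\|w''\|_2^2 + \int_I F(w')$ of the energy as a Lyapunov functional. Testing with $\partial_t z_\e$ yields a dissipative delay contribution, modulo an error integrable in time by \eqref{hyp10}, so $\cE^{\mathrm{st}}(z_\e(t))$ converges and $\partial_t z_\e \to 0$ in $L^2$ along a sequence of times. Combined with the uniform $H^2$-bound of (i) and the compact embedding $H^2(I) \hookrightarrow C^1(\overline{I})$, the $\omega$-limit set $\omega(z_\e)$ is a nonempty, compact, connected subset of the steady-state set \eqref{eqstatz}. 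By Proposition~\ref{propeqstatw0}, steady states form finitely many one-parameter families parametrized by additive constants. Passing to the limit in the conservation law from (i) along any $Z \in \omega(z_\e)$ forces $(Z,\rhostar) = \kappa_\e$, which fixes the constant within each family; thus $\omega(z_\e)$ is a finite set, and connectedness singles out a unique $Z_\e$, giving \eqref{convH2b}.

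\textbf{Part~(iv).} Let $d_0 > 0$ denote the minimum $H^2$-distance between distinct steady-state shape classes, which is positive since by Proposition~\ref{propeqstatw0} there are only finitely many such shapes modulo constants. Fix $\e^* \in (0,\e_0]$ with $Z'_{\e^*} \equiv 1$ and choose $t_0$ large enough that $\|z_{\e^*}(t_0) - Z_{\e^*}\|_{H^2} < d_0/4$ by (iii). By (ii), for $\e$ sufficiently close to $\e^*$, $z_\e(t_0)$ lies within $H^2$-distance $d_0/2$ of the affine slope-$+1$ manifold. The main obstacle is then to upgrade this one-time snapshot into a long-time statement: one must show that $t \mapsto z_\e(t)$ remains in a tube around this manifold for all $t \ge t_0$, so that $Z_\e = \lim_{t\to\infty} z_\e(t)$ also belongs to it. I expect this to follow from a quantitative stability analysis around $Z_{\e^*}$: since $F''(1) = 8 > 0$, the quadratic form $\int_I (v'')^2 + 8\int_I (v')^2$ associated with the linearization is coercive on $H^2(I)$ modulo additive constants, and together with the nonnegativity of the delay dissipation this should confine the trajectory in the tube. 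The additive constant in $Z_\e$ is then pinned down by $(Z_\e,\rhostar) = \kappa_\e$, and the case $Z'_\e \equiv -1$ is identical.
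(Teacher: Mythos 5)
Your proposal captures the overall architecture — Banach fixed point for local existence, energy dissipation plus a nonlocal conservation law for global control, $\omega$-limit set analysis for (iii), perturbation from $\e^*$ for (iv) — but there are two genuine gaps that would sink the argument as written, plus some genuine differences of route worth noting.

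The central gap is in the energy dissipation. You assert that testing with $\partial_t z_\e$ makes the delay term contribute ``$\int_I (\mu/\e)(\partial_t z_\e)^2\,ds$ minus a past-data error controlled via the kernel moments \eqref{hyp10}.'' This is false: $\int_I \cL_\e[z_\e]\,\partial_t z_\e\,ds$ is not $\e^{-1}\|\sqrt{\mu}\,\partial_t z_\e\|_2^2$ up to a small error, and the moment bounds \eqref{hyp10} give no sign. The mechanism that actually works (and that the paper uses) is to augment the mechanical energy with the memory term $\frac{1}{2\e}\int_I\int_0^\infty u_\e^2\,\rho\,da\,ds$ (where $u_\e(s,t,a)=z_\e(s,t)-z_\e(s,t-\e a)$), so that the full $E_\e$ satisfies $E_\e'(t)=\frac{1}{2\e^2}\int_I\int_0^\infty u_\e^2\,\partial_a\rho\,da\,ds\le 0$; the nonpositivity relies crucially on the monotonicity hypothesis $\partial_a\rho\le 0$ of \eqref{eq.kernel0}, which your proposal never invokes. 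Without this assumption and this augmented energy, there is no usable dissipation identity, and the $H^2$-in-time bound needed for global existence (and re-used in (iii), (iv)) does not follow. A second gap appears in (iii): showing $\partial_t z_\e\to 0$ in $L^2$ along a time sequence does not by itself give the stationary equation, because for the delayed problem the force balance reads $\cL_\e[z_\e]+z_\e''''-(F'(z_\e'))'=0$ and $\cL_\e[z_\e]$ is not $\partial_t z_\e$; one must prove $\cL_\e[z_\e](\cdot,t)\to 0$ in $L^p$ as $t\to\infty$, which the paper derives from the time-integrability of $\int_I\int_0^\infty u_\e^2|\partial_a\rho|\,da\,ds$ via Cauchy--Schwarz, again relying on the dissipation structure above.

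Beyond the gaps, two genuine differences of route are worth flagging. For the local fixed point, you lean on parabolic maximal regularity, whereas the paper deliberately uses the \emph{elliptic} resolvent for $\mu+\e\,\partial_s^4$ (Proposition~\ref{lemresolvent.bis}); because the delay operator is not a time derivative, the parabolic framework does not apply cleanly and the elliptic formulation is what makes the contraction argument tractable and yields the sharp $\e$-dependence of the existence time. For part (iv), your linearized-stability ``tube'' argument around the slope-$\pm 1$ manifold is a different idea from the paper's global energy-barrier argument (thresholds $\eta_1,\eta_2$ in \eqref{defeta1}--\eqref{defeta2} combined with an intermediate-value obstruction on $z_\e'(0,t)$); the paper's route avoids the delicate issue of controlling a nonlocal-in-time linearized dynamics in a tube, which your sketch acknowledges but does not resolve. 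The conservation law itself is also slightly off in your description: the quantity that is \emph{exactly} conserved is the past-time average $\Theta_\e(t)=\int_0^\infty\int_I z_\e(s,t-\e\tau)\rhobar(s,\tau)\,ds\,d\tau$, while $(z_\e(t),\rhostar)$ is only shown to be $O((\bar R\e)^{1/2})$-close to $\kappa_\e$ and to converge to it as $t\to\infty$.
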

	
	\goodbreak

	\begin{thm}\label{thm-cvE}
		Assume \eqref{hyppbm1loc}--\eqref{hyp10}, \eqref{hyppbm1locz0}, 
		and let $z_0$ be the solution of \eqref{mainstrong1-0}  with $b=\rhostar$ and $\phi=z_p(s,0)$.
		
		\smallskip
		
		(i) For every fixed $T>0$, we have
		\be{cvzeps}
		\lim_{\e\to 0}\zeps = z_0,
		\ee
		where the convergence is strong in $C([0,T];C^1(\bar I))$, weak in $H^1(0,T;L^2(I))$ and weak-${}^*$ in $L^\infty(0,T;H^2(I))$.
		If in addition \eqref{hyprhoa} holds,
		then the convergence in \eqref{cvzeps} is strong in $L^2(0,T;H^2(I))$.
		\smallskip
		
		(ii) Assume that \eqref{hyprhoa} holds and that $Z'_0\equiv 1$ (resp.~$-1$).
		Then there exists $\bar\e_0\in(0,\e_0]$ such that, for all $\e\in(0,\bar\e_0]$, $Z'_\e\equiv 1$  (resp.~$-1$).
	\end{thm}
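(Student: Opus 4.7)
My plan is to extract a limit by compactness and identify it with $z_0$. Testing (\ref{mainweak}) against $\partial_t z_\e$ yields
\[
\frac{d}{dt}\Bigl(\tfrac12\|z_\e''\|_2^2+\int_I F(z_\e')\,ds\Bigr)+\bigl(\cL_\e[z_\e],\partial_t z_\e\bigr)=0,
\]
and bounding the delay pairing via (\ref{hyp10}) and the past-data bound (\ref{hyppbm1locz0}) (as in the existence part of Theorem~\ref{thmglob}) produces $\e$-uniform bounds in $L^\infty(0,T;H^2)\cap H^1(0,T;L^2)$, hence in $L^2(0,T;H^4)$ via the equation. Aubin-Lions with $H^2(I)\subset\subset C^1(\bar I)\subset L^2(I)$ then gives a subsequence converging to some $\bar z$ strongly in $C([0,T];C^1(\bar I))$, weakly in $H^1(0,T;L^2)$, and weak-${}^*$ in $L^\infty(0,T;H^2)$. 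The nonlinear term $F'(z'_\e)$ passes by strong $C^1$-convergence. For the delay operator I would rewrite
\[
\cL_\e[z_\e](s,t)=\int_0^\infty a\rho(s,a)\,\frac{1}{\e a}\int_{t-\e a}^{t}\partial_\sigma z_\e(s,\sigma)\,d\sigma\,da,
\]
split at $a=A$ using (\ref{hyp10}) to control the tail, and exploit the weak $L^2_{t,s}$-convergence of $\partial_t z_\e$ on the finite-$a$ range to obtain $\cL_\e[z_\e]\rightharpoonup\rhostar\,\partial_t\bar z$ weakly in $L^2(0,T;L^2)$. Hence $\bar z$ solves (\ref{mainstrong1-0}); uniqueness in Proposition~\ref{prop-pbm0} identifies $\bar z=z_0$ and removes the subsequence. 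For the strong $L^2(0,T;H^2)$-convergence under (\ref{hyprhoa}), I would test the difference equation on $w_\e:=z_\e-z_0$ against $\partial_t w_\e$, using the coercivity (\ref{hyprhoa}) to extract a dissipation bounding $\|w_\e''\|_{L^2_{t,s}}^2$, with the $F'$-difference absorbed via the already established strong $C^1$-convergence.

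\textbf{Plan for (ii).} I would argue by contradiction, assuming $\e_n\to 0$ with $Z'_{\e_n}\not\equiv 1$. By Proposition~\ref{propeqstatw0} and the normalisation $(Z_\e,\rhostar)=\kappa_\e$, with $\kappa_\e$ continuous in $\e$ from (\ref{defkappae}) and $\kappa_0=(Z_0,\rhostar)$, the set $\{Z_\e\}$ is a finite union of continuously varying affine shifts of fixed profiles, bounded in $H^2(I)$. Up to a subsequence $Z_{\e_n}\to Z^*$ in $H^2$ with $(Z^*,\rhostar)=\kappa_0$. The crux is to identify $Z^*=Z_0$. For this I would prove convergence of the static energy
\[
\mathcal F(w):=\tfrac12\|w''\|_2^2+\int_I F(w')\,ds,\qquad \mathcal F(Z_\e)\longrightarrow \mathcal F(Z_0)=0\ \text{as}\ \e\to 0,
\]
using an augmented energy identity of the form $\frac{d}{dt}(\mathcal F(z_\e(t))+\mathcal G_\e(t))\le 0$ for some nonlocal memory functional $\mathcal G_\e$ (essentially the conserved quantity announced in the abstract), combined with: $z_\e(t)\to Z_\e$ in $H^2$ from Theorem~\ref{thmglob}(iii) to compute $\lim_{t\to\infty}\mathcal G_\e(t)$; the strong $L^2(0,T;H^2)$-convergence from (i) giving $\mathcal F(z_\e(T))\to\mathcal F(z_0(T))$ at a.e.~$T$; uniform smallness of $\mathcal G_\e(T)$ in $\e\to 0$ from (\ref{hyp10}) and the uniform a priori bounds; and the dissipation $\mathcal F(z_0(T))\to 0$ as $T\to\infty$ from Proposition~\ref{prop-pbm0}. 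Since non-affine and constant steady states have $\mathcal F>0$, this forces $(Z^*)'\in\{\pm 1\}$.

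To select the sign $+1$, I would observe that for $\delta$ small the sublevel set $\{w\in H^2(I):\mathcal F(w)<\delta\}$ splits into two connected components around the affine families $\{s+c:c\in\R\}$ and $\{-s+c:c\in\R\}$, separated by a positive Allen-Cahn-type barrier. By (i) and Proposition~\ref{prop-pbm0}, $z_\e(T)$ is close in $C^1$ to $s+c_0$ for $T$ large and $\e$ small, so $z_\e(T)$ lies in the $+1$-component. Monotonicity of $\mathcal F+\mathcal G_\e$ together with uniform control of $\mathcal G_\e$ then keeps $\mathcal F(z_\e(t))<\delta$ for $t>T$, and continuity in $t$ traps the trajectory in the $+1$-component, forcing $(Z_\e)'\equiv 1$ at the large-time limit. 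This contradicts our assumption. Alternatively, Theorem~\ref{thmglob}(iv) gives openness of $\{Z'_\e\equiv\pm 1\}$, and connectedness of $(0,\e^\star]$ combined with the finite-time $+1$-bias from (i) yields the same conclusion.

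\textbf{Main obstacle.} The principal difficulty is the energy convergence $\mathcal F(Z_\e)\to 0$: the $t$-derivative of $\mathcal F(z_\e(t))$ is not sign-definite in the delayed dynamics, so one must identify the right augmented energy $\mathcal F+\mathcal G_\e$, prove its monotonicity, control $\mathcal G_\e$ uniformly in $\e$, and commute the limits $t\to\infty$ and $\e\to 0$. The subsequent slope selection rests on the connectedness structure of low-energy sets together with the non-routine openness of the slope-selection sets provided by Theorem~\ref{thmglob}(iv).
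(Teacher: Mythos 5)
Your proposal for part (ii) is essentially the paper's argument: you rely on convergence of energies $E_\e\to E_0$ uniformly in $t$ (Proposition~\ref{prop-cvE}), you note that the memory part of the energy vanishes as $\e\to 0$ (this is the content of Lemma~\ref{lem.compact1}, and ``$\mathcal F+\mathcal G_\e$'' is precisely $E_\e$), and you use an intermediate-value / connectedness argument to rule out the wrong sign. The paper's version is a bit more concrete --- it introduces the two positive thresholds $\eta_1$ (lowest stationary energy above affine) and $\eta_2$ (Allen--Cahn barrier in the form $\inf\{\tilde E(W): W'(0)=0\}>0$) and then tracks the pointwise value $z_\e'(0,t)$ rather than connected components of sublevel sets --- but your ``barrier'' is the same object, so this part is aligned.

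For part (i), the overall structure (compactness from $L^\infty_t H^2_s\cap H^1_tL^2_s$ bounds, identification of the limit via uniqueness, Lemma~\ref{lem.unique-weak}) matches the paper, but the two technical implementations differ and yours have gaps.

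First, your passage to the limit in $\cL_\e[z_\e]$ through the representation
$\cL_\e[z_\e]=\int_0^\infty a\rho\,\frac{1}{\e a}\int_{t-\e a}^t\partial_\sigma z_\e\,d\sigma\,da$
is not correct as written, because $z_\e$ has a jump discontinuity at $t=0$ (cf. Remark~\ref{rem-mainth}(iii) and Proposition~\ref{lem.discont}), so for $t-\e a<0$ the fundamental theorem of calculus picks up a Dirac contribution of size $O(\e^{1/2})$ in $L^2(I)$. This is not fatal --- the jump decays as $\e\to0$ and your split at $a=A$ could be made to quarantine it --- but it needs to be accounted for explicitly. The paper sidesteps the issue entirely by testing against a smooth $\psi(s,t)$, integrating in $t$, and transferring the $\e a$-shift onto $\psi$ by a change of variables (the $\mathcal D^1_\e,\mathcal D^2_\e,\mathcal D^3_\e$ decomposition in Lemma~\ref{lem.eps.zero}), which requires no time differentiability of $z_\e$ across $t=0$ and is therefore cleaner. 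Also, your parenthetical claim of a uniform-in-$\e$ bound in $L^2(0,T;H^4)$ ``via the equation'' does not hold: from \eqref{mainstrong1} one has $z_\e''''=(F'(z_\e'))'-\cL_\e[z_\e]$, and controlling $\cL_\e[z_\e]$ in $L^2_{t,s}$ uniformly in $\e$ requires the coercivity assumption \eqref{hyprhoa} (this is exactly what Lemma~\ref{lem.compact0} proves, and why \eqref{hyprhoa} appears as an extra hypothesis for the strong $L^2H^2$ conclusion). Fortunately the $C([0,T];C^1)$ compactness that you actually use follows already from the $L^\infty H^2\cap H^1L^2$ bounds.

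Second, your plan for the strong $L^2(0,T;H^2)$ convergence --- subtract the equations, test against $\partial_t w_\e$ with $w_\e=z_\e-z_0$, and use \eqref{hyprhoa} as a coercivity of the delay term --- is a genuinely different route from the paper's, and it is the step I find least convincing. The difficulty is that $\cL_\e[z_\e]-\rhostar\partial_tz_0$ is not $\rhostar\partial_tw_\e$ plus a small error: the cross term $\cL_\e[z_\e]-\rhostar\partial_tz_\e$ is controlled only in a time-integrated sense and weakly, not at the level where it can be paired against $\partial_tw_\e$ and absorbed. The paper instead proves a uniform-in-$\e$ bound for $z_\e$ in $L^2(0,T;H^3)$ by multiplying the equation with $-z_\e''$ and using \eqref{hyprhoa} exactly to control $\|\cL_\e z_\e\|_{L^1(I)}$ via Cauchy--Schwarz against the dissipated quantity $u_\e^2|\partial_a\rho|$; compactness by Aubin--Lions then upgrades the weak convergence to strong $L^2_tH^2_s$ convergence. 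Unless you can make your energy argument for the difference close, you should adopt a higher-regularity-plus-compactness strategy of this type.
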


\begin{rem} \label{rem-mainth}
	 (i) 
		By a simple energy argument  (see Remark~\ref{rem-pbm0stab}(i) for details), the stable steady states of \eqref{mainstrong1-0} 
		are exactly the nonconstant affine steady states ($Z\equiv \pm x+Const.$).
		More precisely, if $ \phi\in H^2(I)$ and $\|\phi''\|_2+\|\phi'\pm 1\|_\infty$ is sufficiently small, 
		then $Z'_0\equiv \mp 1$.
		Theorem~\ref{thm-cvE}(ii) guarantees that under
		this assumption for $\phi=z_p(0)$, we have $Z'_\e\equiv -1$ (resp.~$1$) for $\e>0$ small.
		
		On the contrary, the result of Theorem~\ref{thm-cvE}(ii) is not expected to hold if $Z'_0\not\equiv\pm 1$,
		since the steady state is then unstable even for the limiting problem (cf.~Remark~\ref{rem-pbm0stab}(ii)).
	
	\smallskip
	
	(ii) In Theorem~\ref{thmglob}, by $z_\e\in L^\infty(\R;H^2(I))$ being a solution we mean that the first (resp., second) part of
	\eqref{mainweak} is satisfied for a.e.~$t>0$ (resp., $t<0$).
	Under the assumptions of Theorem~\ref{thmglob},
	the solution actually enjoys the additional regularity
	\be{regulhigher}
	z_\e\in L^\infty(0,\infty;W^{4,\infty}(I))\cap W^{1,\infty}(0,\infty;H^2(I)),
	\ee
	so that the first part of \eqref{mainweak} is satisfied for all $t>0$
	and $z_\e$ is in fact a strong solution.
	Moreover, it satisfies the uniform global bound
	$$\|z_\e(t)\|_{H^2(I)}\le C\bar R,\quad t>0,\ \e\in (0,\e_0],$$
	and the uniform, time derivative global integrability property
		\begin{equation}\label{eq.stab.l2.tps.espace}
			\int_0^\infty\int_I |\partial_t z_\e|^2 dsdt \le C\bar R,\quad \e\in (0,\e_0],
		\end{equation}
		where
	$$R:=1+\|z_p\|_{L^\infty(-\infty,0;H^2(I))},\quad \hat R:=\|z_p\|_{W^{1,\infty}(-\infty,0;L^\infty(I))}, \quad
	\bar R=R^4+\hat R^2.$$
	Also we may take 
	$\e_0=c_0\bar R^{-4}$,  with $c_0=c_0(\rho, L)>0$.
	See Propositions~\ref{thmregtime} and \ref{thmunifbounds}. 
	
	\smallskip
	
	(iii) The solution $z_\e$ is continuous in $H^2(I)$ for $t>0$  and has a limit as $t\to 0^+$ (owing to \eqref{regulhigher}),
	but it generally has 
	a jump discontinuity at $t=0$, unless a suitable  
	compatibility condition is imposed on $z_p$.
	A useful estimate of the jump, of independent interest, is given in Proposition~\ref{lem.discont}.
\end{rem}

\subsection{Organisation and main ideas of the proofs}
In Section~\ref{SecSS} we describe the steady states of the problem by means of ODE analysis,
providing results that will be used in Section~\ref{SecConv} for the proof of the asymptotic behavior as $t\to\infty$ (Theorem \ref{thmglob}(iii)).

 Local existence and uniqueness  (Theorem \ref{thmloc} below), as well as time regularity 
(Proposition~\ref{thmregtime}), rely on a Banach fixed point. 
 The proofs, given in Section~\ref{SecLoc}, are based on fixed point arguments
	built on the resolution of a fourth-order elliptic
	problem. It is to be noticed that this is not standard in the framework of partial 
	differential evolution equations where a parabolic problem is more often used.
	Here we fix-point not only the nonlinearity but also the past of the solution,
	 involved as a
	source term in the fixed-point operator. 
	 The fourth-order 
	elliptic problem is analyzed in Section~\ref{SecRes}, where we provide resolvent estimates in terms of $\e$ that are required
	for the local existence-uniqueness. This is done in the Hilbert setting and for classical solutions,
	giving by interpolation a general result in $L^p$ spaces for $p\geq 2$ which 
	is of interest {\em per se}  (cf.~Propositions \ref{lemresolvent0.bis} and \ref{lemresolvent.bis}).

	The previous step guarantees a local time of existence for $\e$ small enough.
		In order to extend this time,
	 so as to prove the global existence part of Theorem \ref{thmglob}(i), we  first show in Section~\ref{SecEn}
	a stability result,
	 namely the time derivative global integrability property
	\eqref{eq.stab.l2.tps.espace} (see Proposition~\ref{thm.nrj}(ii)). One of the main features of this estimate is the uniformity with respect to $t$ 
	and to $\e$,  which plays a key role in subsequent proofs. Its proof is much more involved than for \eqref{mainstrong1-0}. Indeed, for   smooth solutions of classical
	gradient flow models one has easily that:
	\begin{equation}\label{edo.energie.zz}
		\ddt{} \left\{ \int_0^L F(\zz'(t))ds + \frac12\|\zz''(t)\|_2^2 \right\}=- \int_0^L  b(s)\left|\dt \zz(t) \right|^2 ds,
	\end{equation}
	giving directly the result after integration in time. For delayed problems
	this  does not hold and an alternative approach has to be found. 
	Thanks to the monotonicity condition \eqref{eq.kernel0}, one first ensures 
	 the energy dissipation property
	\begin{equation}\label{edo.energie.zeps}
		\ddt{} \cE_t(\zeps) = \frac{1}{2\e^2} \int_0^L \int_0^\infty  \bigl(\zeps(t)-\zeps(t-\e a)\bigr)^2\partial_a \kernel \, da ds <0,
	\end{equation}
	which guarantees the time integrability of the 
	right hand side of \eqref{edo.energie.zeps} (see Proposition~\ref{thm.nrj}(i)). This latter
	term is then used as a source term for a closed elliptic problem 
	satisfied by $\dt \zeps$  (Lemma~\ref{lem.dt.z.unif2})
	and from this we deduce an $L^\infty_t L^2_s$ bound for $\zeps'$ and $\zeps''$
	(see Proposition~\ref{thm.dt.z.unif}).
	
	Since our basic working space for local well-posedness is $L^\infty_t H^2_s$ and our boundary conditions are of (nonlinear) Neumann type, 
	this is still unsufficient to conclude global existence and it remains to bound $\zeps$ itself.
	This is achieved in a second step in Section~\ref{SecUnif},
	where a uniform $L^\infty$ bound in time and space
	 is derived (Proposition~\ref{thmunifbounds}), based on a new invariant  
	\begin{equation}\label{invariant.intro}
	\int_0^L \int_0^\infty \zeps(s,t-\e a)\varphi(s,a) da ds=\kappa_\e,
		\end{equation}
	where $\varphi$ is defined in \eqref{Sndef2},
	which appears herein as a key quantity
	 (its properties are derived in Lemma~\ref{lemunifbounds}).
	While $\varphi$ is somehow reminiscent of the so-called left eigenvector 
	in studies of hyperbolic equations from population dynamics 
		\cite[Chap. 3]{Perth.Book}, it is completely new in the present context.
	This invariant is to be related to $(\mu_1,\zz)$ to which it formally reduces when $\e$ goes to zero. 
	As a matter of fact, we show, cf.~Theorem \ref{thmglob}{\em (iii)}, that
	actually for $\e>0$ fixed, $\zeps(t) \to Z_\e$ when $t$ grows large
	and $(Z_\e,\mu_1) = \int_0^L \intrp \zp(-\e a) \varphi(s,a) da ds$. The same result 
	holds as well when $\e=0$ and is more standard (cf.~Proposition \ref{prop-pbm0}{\em (ii)}).
	
	  In Section~\ref{SecCont}, the (H\"older) continuity with respect to $\e$ in Theorem \ref{thmglob}{\em (ii)} is 
	 established (cf.~Proposition~\ref{prop-conteps}) by making use in particular of the above energy estimates.
	
	Next, in Section~\ref{SecConv}, Theorem \ref{thmglob}{\em (iii)}, i.e.~the convergence to a steady state as $t\to\infty$ for fixed $\e$, is proved by dynamical systems arguments
	(cf.~Lemmas~\ref{prop-quasistat} and \ref{lem-quasistat}),
	based on the Liapunov functional given by the energy and on the special structure of the set of equilibria.
	Namely, there is a finite number of equilibria up to additive constants (see Proposition~\ref{propeqstatw2}) and, whereas the energy itself 
	does not discriminate the additive constant, the stabilization to a single value of the 
	additive constant follows from the existence of the invariant in \eqref{invariant.intro}.

	 We next prove Theorem~\ref{thm-cvE}(i), 
	 i.e.~the convergence of $\zeps$ to the solution $z_0$ of the parabolic problem \eqref{mainstrong1-0}
	as $\e\to 0$.
	This is first established in a weak $H^2_s$ sense in Section~\ref{SecConvEps}, and then in the strong $L^2_t H^2_s$ sense in~Section~\ref{SecStab},
	under the additional coercivity assumption \eqref{hyprhoa} on the kernel $\rho$.
	The proofs rely on interpolation and compactness arguments which make use of the 
	{\em a priori} estimates on $\dt \zeps$ mentioned above,  plus some additional higher order {\em a priori} estimates (Lemma~\ref{lem.compact0}).
	The strong convergence result is new as compared with 
	\cite{MiOel.2,MiOel.4,Mi.5}, to which 
	it applies as well (see Remark \ref{rmk.10.1}). Moreover it allows to show Theorem~\ref{thm-cvE}(ii),
	namely the stability of the affine stationary states $Z'= \pm 1$. This relies on three main ingredients:
	(i)  these stationary states are global minima of the mechanical energy 
	$E_0(Z):=\int_0^L (\frac12|Z''|^2+F(Z')) ds$;  
	(ii)  the convergence of the energy $E_\e(z_\e)$ to $E_0(z_0)$ (as a consequence of the above $L^2_t H^2_s$-convergence);
	(iii) the decrease of energies in time.
	
	 Finally, in the appendix, we provide a proof of the properties of the  parabolic problem~\eqref{mainstrong1-0}:
	  existence-uniqueness-regularity (Proposition \ref{prop-pbm0}(i)),
	energy balance (Proposition~\ref{prop-pbm0bEn}),   and stabilization (Proposition \ref{prop-pbm0}(ii)).

\medskip


\section{Steady states}\label{SecSS}

\newcommand{\sn}{{\mathrm{sn}}}
Before entering the analysis of the evolution problem \eqref{mainweak}, 
a basic and more elementary task is to describe the steady states and to establish Proposition~\ref{propeqstatw0},
which is needed in the proof of the convergence part of Theorem~\ref{thmglob}.

We say that $Z$ is a (classical) solution of the stationary problem \eqref{eqstatz} if $Z\in C^4([0,1])$ and $Z$ satisfies \eqref{eqstatz} pointwise.
It is not difficult to see that weak and classical solutions of \eqref{eqstatz} are equivalent notions.
Indeed, if $Z\in H^2(I)$ is a weak solution, it solves:
$$
\left(Z'',v''\right) + (F'(Z'),v') =0,\quad \forall v \in H^2(I)
$$
then it solves \eqref{eqstatz} in the distributional
sense. As $Z' \in C(\overline{I})$ and $Z'' \in L^2(I)$, this implies that $Z'''' \in L^2(I)$ and
thus $Z \in H^4(I) \subset C^3(\overline{I})$ thanks to Sobolev's embeddings. 
Again using $\eqref{eqstatz}$ provides $Z'''' \in C^1(\overline{I})$.
The sufficient condition
is obvious.

To prove Proposition~\ref{propeqstatw0} (and give additional information on the solutions), we proceed as follows.
Observe that $z$ is a solution of \eqref{eqstatz}
if and only if $w:=z'$ solves
\be{eqstatw}
w''=F'(w)=-4w(1-w^2) \ \ \hbox{ in $(0,L)$, \quad with }w'(0)=w'(L)=0.
\ee
To study \eqref{eqstatw}, for all $a\in \R$, we consider the shooting problem
\be{eqstatw2}
w''=F'(w) \quad \hbox{ with $w'(0)=0$ and $w(0)=a$.}
\ee
Problem \eqref{eqstatw2} has a unique solution $w_a$, defined on a maximal time interval $[0,S_a^*)$
for some $S_a^*\in(0,\infty]$.
We clearly have $w_0=0$, $w_{\pm 1}=\pm 1$ (constant solutions).
Also by symmetry, we have $w_{-a}=-w_a$, so that it thus suffices to consider $a>0$, $a\ne 1$.
This case is treated in the following proposition.

\begin{prop} \label{propeqstatw} \hspace{1em}
	\begin{itemize}
		\item[(i)]If $a>1$ then $w'_a>0$ on $(0,S_a^*)$. Moreover, $S_a^*<\infty$ and $\lim_{s\to S_a^*} w'_a(s)=\infty$.
		
		\item[(ii)]For $a\in(0,1)$, we have $S_a^*=\infty$ and $w_a$ is given by 
		 an anti-periodic solution of $w''=F'(w)$, for some half-period $\Pi_a>0$
		(which is in particular a periodic solution of period $2\Pi_a$) and is explicit:
		\begin{equation}\label{eq.exact.sol}
			w_a(s) = a \, \sn\left(\sqrt{2(2-a^2)}s + K(k_a),k_a\right), \quad k_a = \frac{a}{\sqrt{2-a^2}},
		\end{equation}
		where $\sn$ denotes  the {\em sine amplitude elliptic function} and $K(k)$ the {\em complete elliptic integral of the first kind.} 
		The number $K(k)$ is a quarter period associated to $\sn(\cdot,k)$ and reads:
		$$
		K(k):= \int_0^{\frac{\pi}{2}} \frac{1}{\sqrt{1-k^2 \sin^2\hskip -1.5pt \theta}} \,d\theta,\quad k \in (0,1).
		$$
	Moreover we have $w'_a< 0$ on $(0, \Pi_a)$.
	
		\item[(iii)]The half-period function $(0,1)\ni a\mapsto \Pi_a$ is continuous monotone increasing with 
		\be{steady0}
		\lim_{a\to 0^+} \Pi_a= \pi/2,\qquad \lim_{a\to 1} \Pi_a=\infty 
		\ee
		and reads: $\Pi_a := K(k_a)/\sqrt{2(2-a^2)}$.
	\end{itemize}
\end{prop}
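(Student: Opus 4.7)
The plan is to reduce \eqref{eqstatw2} to a phase-plane problem via its first integral. Multiplying the ODE by $w'_a$ and using $w'_a(0)=0$, $w_a(0)=a$ yields
\[
(w'_a)^2 = 2\bigl(F(w_a)-F(a)\bigr) = 2(w_a^2-a^2)(w_a^2+a^2-2),
\]
and essentially everything below follows from monitoring the sign of this identity together with that of $w''_a(0)=F'(a)=4a(a^2-1)$.

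For part (i), $a>1$ gives $w''_a(0)>0$, so $w_a$ leaves $a$ strictly increasingly, and the factorization shows $(w'_a)^2>0$ as long as $w_a>a$; a continuity/bootstrap argument yields $w'_a>0$ on $(0,S^*_a)$. Finite-time blow-up is then read off from the time-of-flight integral
\[
S^*_a=\int_a^\infty \frac{dw}{\sqrt{2(F(w)-F(a))}},
\]
whose integrand is $O(w^{-2})$ as $w\to\infty$ (since $F(w)\sim w^4$), so $S^*_a<\infty$ and $w'_a(s)\to\infty$ as $s\to S^*_a$.

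For part (ii), $0<a<1$ gives $w''_a(0)<0$ and $F(w)\ge F(a)$ on $[-a,a]$ (while the other component of $\{F\ge F(a)\}$ is $\{|w|\ge\sqrt{2-a^2}\}$, disconnected from $a$ since $a^2<1<2-a^2$). Hence $w_a$ decreases monotonically from $a$ to the next zero of $w'_a$, which by the first integral must be $w_a=-a$, reached at a finite time $\Pi_a>0$. Anti-periodicity is a symmetry argument: since $F'$ is odd, $v(s):=-w_a(s+\Pi_a)$ satisfies the same Cauchy data and the same ODE as $w_a$, whence $w_a(s+\Pi_a)=-w_a(s)$ by uniqueness; this gives global existence and $2\Pi_a$-periodicity. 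For the closed form \eqref{eq.exact.sol} I would substitute the ansatz $w_a(s)=a\,\sn(\lambda s+\phi,k)$, use the identity $\sn''=-(1+k^2)\sn+2k^2\sn^3$, and match coefficients of $\sn$ and $\sn^3$ to fix $k^2=a^2/(2-a^2)$ and $\lambda^2=2(2-a^2)$; the phase $\phi=K(k_a)$ is then forced by $w_a(0)=a$ together with $\sn(K(k),k)=1$.

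For part (iii), continuity and strict monotonicity of $a\mapsto\Pi_a$ on $(0,1)$ reduce, via the explicit representation, to the facts that $k_a$ and $1/\sqrt{2-a^2}$ are each continuous and strictly increasing on $(0,1)$ and that $K$ is continuous and strictly increasing on $[0,1)$; the limits in \eqref{steady0} then follow from $K(0)=\pi/2$ and from the logarithmic divergence $K(k)\to\infty$ as $k\to 1^-$. The only non-routine step I anticipate is the amplitude–frequency matching of the sn ansatz with the prescribed initial data; once that is done, the anti-periodic structure, global existence, and the asymptotics of $\Pi_a$ fall out of classical properties of $\sn$ and $K$.
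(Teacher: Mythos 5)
Your proof is correct and, at its core, parallels the paper's argument: both reduce the shooting problem to the energy identity $(w_a')^2 = 2\bigl(F(w_a)-F(a)\bigr)$, recognize the bounded trajectories as elliptic sines, and invoke the monotonicity and endpoint behavior of the complete elliptic integral $K$. Your variations are each sound and in places cleaner. For (i), the time-of-flight integral $\int_a^\infty dw/\sqrt{2(F(w)-F(a))}<\infty$ treats the finiteness of $S_a^*$ and the divergence of $w_a'$ in one stroke; the paper instead bootstraps the differential inequality $w''\ge c\,w^3$, a slightly longer route to the same conclusion. For (ii), you make the anti-periodicity fully explicit via the reflection $v(s):=-w_a(s+\Pi_a)$ and Cauchy uniqueness—an argument the paper leaves implicit—and you match the $\mathrm{sn}$ ansatz against the second-order identity $\mathrm{sn}''=-(1+k^2)\,\mathrm{sn}+2k^2\,\mathrm{sn}^3$, whereas the paper matches against the first integral $(\mathrm{sn}')^2=k^2\mathrm{sn}^4-(1+k^2)\mathrm{sn}^2+1$; the two are equivalent. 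For (iii), deducing monotonicity of $\Pi_a$ from that of $a\mapsto k_a$, of $K$, and of $a\mapsto 1/\sqrt{2-a^2}$ is lighter than the paper's direct evaluation of $\Pi_a'$ as a positive integral and is equally conclusive. One detail to fix when writing it up: the anti-period of $s\mapsto\mathrm{sn}(\chi_a s+K(k_a),k_a)$ with $\chi_a=\sqrt{2(2-a^2)}$ is $2K(k_a)/\chi_a$, so $\Pi_a = 2K(k_a)/\sqrt{2(2-a^2)}$ (this is what the paper's proof actually derives); the formula displayed in the Proposition is off by a factor of $2$, and only the corrected version is consistent with $\lim_{a\to 0^+}\Pi_a=\pi/2$ and your use of $K(0)=\pi/2$.
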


As a consequence we can describe the solutions of \eqref{eqstatw} and \eqref{eqstatz} as follows.

\begin{prop} \label{propeqstatw2} \hspace{1em}
	\begin{itemize}
		\item[(i)]Assume $L\le\pi/2$. Then problem \eqref{eqstatw} admits only the  constant solutions $0$ and $\pm 1$.
		\item[(ii)]Assume $L>\pi/2$ and let $m=\lfloor 2L/\pi\rfloor\in N^*$. Beside the constant solutions, \eqref{eqstatw} admits an even finite number $2m$ of solutions $\pm w_1,\cdots,w_m$
		where, for each $n\in\{1,\dots,m\}$, $w_n$ is the unique antiperiodic solution of
		\eqref{eqstatw2} with $a>0$ such that $\Pi_a=L/n$.
		\item[(iii)]The solutions of \eqref{eqstatz} are given by $z(s)=\int_0^s w(\tau)d\tau + c$ where $w$ is any solution of 
		\eqref{eqstatw} and $c\in\R$ is arbitrary.
	\end{itemize}
\end{prop}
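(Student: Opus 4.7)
Since any $w\in C^2([0,L])$ solving \eqref{eqstatw} has $w'(0)=0$, it is uniquely determined by $w(0)=a$ and thus coincides on $[0,L]$ with the shooting solution $w_a$ of \eqref{eqstatw2}. Classifying solutions of \eqref{eqstatw} is therefore equivalent to finding those $a\in\R$ for which $w_a$ is defined on the whole of $[0,L]$ and satisfies $w'_a(L)=0$. The choices $a\in\{0,\pm 1\}$ give the constant solutions $w\equiv 0,\pm 1$, and by the symmetry $w_{-a}=-w_a$ it suffices to examine $a>0$, $a\ne 1$. When $a>1$, Proposition~\ref{propeqstatw}(i) yields $w'_a>0$ strictly on $(0,S_a^*)$ together with $S_a^*<\infty$; hence either $L\ge S_a^*$ (no solution on $[0,L]$) or $L<S_a^*$ and $w'_a(L)>0$. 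In either case no such $a$ is admissible.

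The heart of the argument is the case $a\in(0,1)$. By Proposition~\ref{propeqstatw}(ii), $w_a$ is $2\Pi_a$-periodic, anti-periodic of half-period $\Pi_a$, and $w'_a<0$ on $(0,\Pi_a)$. Anti-periodicity gives $w'_a(s+\Pi_a)=-w'_a(s)$, so $w'_a>0$ on $(\Pi_a,2\Pi_a)$; periodicity then shows that the set of critical points of $w_a$ on $[0,\infty)$ is exactly $\{n\Pi_a : n\in\mathbb{N}\}$. Consequently, $w'_a(L)=0$ is equivalent to $\Pi_a=L/n$ for some $n\in\mathbb{N}^*$. Proposition~\ref{propeqstatw}(iii) says that $(0,1)\ni a\mapsto\Pi_a$ is a continuous strictly increasing bijection onto $(\pi/2,\infty)$, so this equation admits a (unique) solution $a_n\in(0,1)$ exactly when $L/n>\pi/2$, i.e.~$n<2L/\pi$. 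If $L\le\pi/2$ no such $n$ exists, proving (i); if $L>\pi/2$ the admissible indices are $n\in\{1,\dots,m\}$ with $m=\lfloor 2L/\pi\rfloor$, and taking into account the sign flip $a\mapsto -a$ one obtains the $2m$ nonconstant solutions $\pm w_1,\dots,\pm w_m$ of \eqref{eqstatw}, which yields (ii).

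For (iii), given $w$ solving \eqref{eqstatw}, its primitive $z(s)=\int_0^s w(\tau)\,d\tau+c$ satisfies $z'=w$, $z''=w'$, $z'''=w''=F'(w)=F'(z')$, whence $z''''=(F'(z'))'$; the boundary conditions $z''|_{\partial I}=w'|_{\partial I}=0$ and $z'''-F'(z')=0$ on $\partial I$ follow at once, so $z$ solves \eqref{eqstatz}. Conversely, for any solution $z$ of \eqref{eqstatz} the function $w=z'$ inherits $w'(0)=w'(L)=0$ from $z''|_{\partial I}=0$; integrating $z''''=(F'(z'))'$ once gives $w''=F'(w)+C$ on $[0,L]$, and the boundary condition $z'''(0)=F'(z'(0))$ forces $C=0$, so $w$ solves \eqref{eqstatw}. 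The most delicate step is identifying the full critical-point set of $w_a$ in the oscillatory regime $a\in(0,1)$: one must rule out any spurious zero of $w'_a$ inside $(0,\Pi_a)$, which rests precisely on the strict monotonicity $w'_a<0$ furnished by Proposition~\ref{propeqstatw}(ii); the rest of the argument is then a direct count via the half-period function $\Pi_a$.
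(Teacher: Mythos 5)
Your proposal is correct and follows essentially the same route as the paper's proof: reduce the boundary-value problem to the shooting problem via Proposition~\ref{propeqstatw}, discard $a>1$ using part (i), reduce the count for $a\in(0,1)$ to the equation $\Pi_a=L/n$ using parts (ii)--(iii), and deduce assertion (iii) by integration. You are somewhat more careful than the paper in two places — explaining via anti-periodicity and the sign of $w'_a$ on $(0,\Pi_a)$ why the critical-point set of $w_a$ is exactly $\{n\Pi_a:n\in\mathbb{N}\}$, and giving both directions of the $z\leftrightarrow w$ equivalence with the constant-of-integration argument — but the underlying strategy is identical.
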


\begin{proof}[Proof of Proposition~\ref{propeqstatw}]
	We shall denote $w=w_a$ for simplicity.
	
	\smallskip
	
	(i)
	If $a>1$ then $w$ cannot be a solution of \eqref{eqstatw}.
	Indeed, we have $w''(0)>0$ and we easily deduce that $w''$ and $w'$ remain $>0$ for $s>0$ as long as $w$ exists.  It follows that $w''\ge cw^3$ for all $s\in (0,S^*)$ with some $c>0$,
		and a standard argument then implies $S^*<\infty$ and $\lim_{s\to S^*} w'(s)=\infty$.

	\smallskip (ii) Multiplying \eqref{eqstatw2} by $w'(s)$ gives
	$
	\nud \left( w_a'\right)^2 {\phantom{}}'= F(w_a)'
	$. We integrate with respect to $s$. This leads to:
	$$
	\nud \left( w_a'\right)^2(s) - F(w_a(s)) = \nud \left( w_a'\right)^2(0) - F(w_a(0)).
	$$
	Now using the boundary conditions in \eqref{eqstatw2} provides:
	$$
	\nud \left( w_a'(s)\right)^2 = F(w_a(s)) - F(a),
	$$
	which finally gives:
	$$
	\left( w_a'(s)\right)^2 = 2(1-F(a) )-4 w_a(s)^2 + 2 w_a(s)^4 =: A w_a^4 + B w_a^2 +  c.
	$$
	We compare this equation with $\sn(s,k)$ satisfying \cite{Schwalm.Book}:
	$$\left(  \sn'(s,k) \right)^{2}=\alpha\, \sn^{4}(s,k)+\beta\, \sn^{2}(s,k)+\gamma, \quad \alpha := k^2, \quad \beta := -(1+k^2),\quad \gamma =1,$$
	where $k$ is a given parameter in $(0,1)$.
	We look for $w_a$ to be  of the form:
	$$
	w(s) = \xi \,\sn (\chi  \,(s-s_0),k),
	$$
	where $\xi$, $\chi$ and $k$ are constants to be found.
	This provides a system of 3 unknowns s.t.
	$$
	2 = \frac{\chi^2}{\xi^2} k^2 ,\quad 4=\chi^2 (1+k^2) , \quad 2(1-F(a)) = \xi^2 \chi^2.
	$$
	After some computations and accounting that $k\in(0,1) $, one obtains the following definition of the constants $\xi,\chi,k$ solving the previous system:
	$$
	\xi_a=a,\quad \chi_a= \sqrt{2(2-a^2)}, \quad k_a = 
	\frac{a}{\sqrt{2-a^2}}.
	$$
	We  next ensure that the solution satisfies the boundary conditions \eqref{eqstatw2}
	at the origin $s=0$. All this leads to the explicit form \eqref{eq.exact.sol}.
	It is a periodic solution of period $4K(k_a)/\sqrt{2(2-a^2)}$.

	\smallskip
	
	(iii) The complete elliptic integral of the first kind $K(k)$ is a regular monotone increasing function of $k \in (0,1)$ s.t. 
	(cf.~\cite{Schwalm.Book}), 
	$\lim_{k \to 0}  K(k) = \frac{\pi}{2}$, and $\lim_{k \to 1} = K(k) = + \infty$, so that $\Pi_a = 2 \frac{K(k_a)}{\chi_a}$
	and one has
	$$
	\lim_{a \to 0} \Pi_a = \frac{\pi}{2} , \quad  \lim_{a \to 1} \Pi_a = +\infty.
	$$
	Moreover, 
	$$
	\Pi_a'= - \int_0^{\frac{\pi}{2}} \frac{4 a (\cos^2(t)-2)dt}{\left({4 (1- a^2) + 2 a^2\cos^2(t)}\right)^{\frac{3}{2}}}  >0, \quad \forall a \in (0,1),
	$$
	which ends the proof.
\end{proof}

\begin{proof}[Proof of Proposition~\ref{propeqstatw2}]
	In view of assertion (i) of Proposition~\ref{propeqstatw}, if $a>1$ then $w$ cannot be a solution of \eqref{eqstatw}.
	In view of assertion (ii), for $a\in (0,1)$, the function $w_a$ is a solution of \eqref{eqstatw} (i.e.,~solves the boundary condition 
	$w'(L)=0$) if and only if its antiperiod is a submultiple of $L$, that is, $\Pi_a=L/n$ for some integer $n\ge 1$. 
	This implies that:
	$$
	L \geq \Pi_a \geq  \frac{\pi}{2}.
	$$
	We exclude the case $L=\pi/2$ since then no non-trivial solution $w_a$ exists.
	Next, the problem: find $(a_n)_{n \in \{1,\dots,m\}} \in (0,1)^m$ s.t.
	$$
	\frac{L}{n}=  \Pi_{a_n}
	$$
	admits a unique solution iff $L/n \in [\pi/2,\infty)$ for all $n\in\{1,\dots,m\}$, which is precisely the way $m$ was chosen.
	This procedure guarantees the construction 
	 of $2m$ distinct solutions of \eqref{eqstatw}. 
	
	This along with the constant solutions describes the set of solutions of \eqref{eqstatw} and completes the proof of assertions (i) and (ii). Assertion (iii) follows immediately by integration.
\end{proof}
\section{Resolvent operator}\label{SecRes}

In order to first establish the local in time existence-uniqueness (by a suitable fixed point argument), problem 
\eqref{mainstrong1}-\eqref{mainstrong2} will be rewritten under the form
\be{pbmloc}
\left\{\hskip 2mm\begin{aligned}
	(\mu(s)+\e\partial^4_s)z&=\e\partial_s(F'(\partial_sz))+\int_0^\infty z(s,t-\e a)\rho(s,a) da, 
	&\; & 0<t<T,\ s\in I, \\
	\partial^2_s z(s,t)&=\partial^3_s z(s,t)-F'(\partial_sz(s,t))=0 &\; &0<t<T,\ s\in\partial I, \\
	z(t)&=z_p(t) &\; &t<0.
\end{aligned}
\right.
\ee
To study \eqref{pbmloc}, for given $\e>0$, we will need resolvent estimates for the linear auxiliary problem:
\be{linearpbm2}
\left\{
\begin{aligned}
	&	ku+\e u''''=\e f'+g,& s\in I, \\
	&	u''=0,\, u'''=f, & s\in\partial I.
\end{aligned}
\right.
\ee
	Here $k_2\ge k_1>0$ and $k=k(s)\in L^\infty(I)$ is a fixed function such that 
	\be{hyplinearpbm}
	0<k_1\le k(s)\le k_2,\quad s\in I.
	\ee
	In this section, $K$ denotes a generic positive constant depending only on $k_1, k_2, L$.
	For given data $(f,g)$ in $L^2(I)\times L^2(I)$,
	by a weak solution of \eqref{linearpbm2} we understand a function $u\in H^2(I)$ which satisfies the variational identity
	\be{linearpbmweak}
	\int_I k(s)u(s)v(s)ds+\e\int_I u''(s)v''(s) ds=-\e \int_I f(s)v'(s)ds+\int_I g(s)v(s)ds,\quad \forall v\in H^2(I).
	\ee
	
	We start with the basic existence-uniqueness result for \eqref{linearpbm2}.

	\begin{prop} \label{lemresolvent0.bis}
		Assume \eqref{hyplinearpbm} and let $\e>0$,  $(f,g)\in L^2(I)\times L^2(I)$.
		\begin{itemize}
			\item[(i)]Problem \eqref{linearpbm2} admits a unique weak solution.
			
			\smallskip
			
			\item[(ii)]Assume in addition that $f\in H^1(I)$. Then a function $u\in H^2(I)$ is a weak solution if and only if it is a strong solution
			i.e.,~$u\in H^4(I)$, $u$ satisfies the equation for a.e.~$s\in I$ and the boundary conditions pointwise.
		\end{itemize}
	\end{prop}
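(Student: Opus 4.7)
To establish (i), I would cast \eqref{linearpbm2} in the standard variational framework on $V=H^2(I)$ endowed with its usual norm. Set
$$a(u,v) := \int_I k(s)\,u(s)\,v(s)\,ds + \e \int_I u''(s)\,v''(s)\,ds, \qquad
\ell(v) := -\e\int_I f\,v'\,ds + \int_I g\,v\,ds,$$
so that the weak formulation \eqref{linearpbmweak} reads $a(u,v)=\ell(v)$ for all $v\in V$. Continuity of $a$ and $\ell$ on $H^2(I)$ is immediate from \eqref{hyplinearpbm} and the embedding $H^2(I)\hookrightarrow W^{1,\infty}(I)$, so the proof of (i) reduces via the Lax-Milgram lemma to coercivity.

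The bound $a(u,u)\ge k_1\|u\|_2^2 + \e\|u''\|_2^2$ controls the $L^2$-norm and the second-derivative seminorm, but not $\|u'\|_2$. I would recover the missing piece by the Gagliardo-Nirenberg inequality on the bounded interval $I$,
$$\|u'\|_2 \le C(L)\bigl(\|u\|_2^{1/2}\|u''\|_2^{1/2}+\|u\|_2\bigr),$$
which yields $\|u\|_{H^2}^2 \le C(k_1,\e,L)\,a(u,u)$. This interpolation step is the only real analytic point in (i); no boundary conditions need to be imposed on the ambient space, since the equation's boundary conditions are of Neumann type and will be recovered at the regularity stage. Lax-Milgram then delivers a unique weak solution.

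For (ii), let $f\in H^1(I)$ and let $u\in H^2(I)$ be the weak solution. Testing \eqref{linearpbmweak} against $v\in C^\infty_c(I)$ and integrating by parts the $f$-term gives
$$\e u'''' + ku = \e f' + g \quad \text{in } \mathcal{D}'(I).$$
Since the right-hand side lies in $L^2(I)$ and $ku\in L^2(I)$, this forces $u''''\in L^2(I)$, hence $u\in H^4(I)$ and the equation holds a.e.\ in $I$.

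Having obtained this interior regularity, I would read off the boundary conditions by returning to a general test function $v\in H^2(I)$, integrating by parts twice the term $\int u''v''$ and once the term $\int f v'$, and subtracting the identity just established in the interior. All volume integrals cancel and there remains
$$\e \bigl[u''(s)\,v'(s)\bigr]_0^L - \e\bigl[(u'''(s)-f(s))\,v(s)\bigr]_0^L = 0, \qquad \forall v\in H^2(I).$$
Since the boundary traces $(v(0),v(L),v'(0),v'(L))$ can be prescribed arbitrarily in $H^2(I)$ (e.g.\ by polynomials), this forces $u''(0)=u''(L)=0$ and $u'''(0)=f(0)$, $u'''(L)=f(L)$. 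The reverse implication (any strong solution is a weak solution) follows by reversing these integrations by parts. The only mildly delicate point of the whole proof is thus the coercivity of $a$ on all of $H^2(I)$, which is resolved cleanly by the one-dimensional interpolation inequality above.
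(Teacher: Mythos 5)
Your proof is correct and follows essentially the same route as the paper: Lax--Milgram on $H^2(I)$ for existence-uniqueness (the paper states this step tersely, while you supply the interpolation inequality that delivers coercivity), followed by distributional regularity to get $u\in H^4(I)$ and a boundary-term integration by parts to recover the natural boundary conditions. No gaps.
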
 
	
	We shall denote by $A_\e:L^2(I)\times L^2(I)\to H^2(I)$ the resolvent operator, defined by $(f,g)\mapsto u$.
	Note that, by linearity, we may write
	\be{defA1A2a}
	A_\e(f,g)=A_{1,\e}f+A_{2,\e}g,
	\ee
	where
	\be{defA1A2}
	A_{1,\e}:f\in L^2(I)\mapsto A_\e(f,0), \quad A_{2,\e}:g\in L^2(I)\mapsto A_\e(0,g).
	\ee
	
	The following proposition provides the required resolvent estimates.
	
	\begin{prop} \label{lemresolvent.bis}
		Assume \eqref{hyplinearpbm} and let $\e\in(0,1)$. 	\smallskip
		
		(i) Let $(f,g)\in L^2(I)\times L^2(I)$. Then $u=A_\e(f,g)$ satisfies
		\be{EstimH2}
		\nrm{u^{(j)}}{2}\leq K \e^{-j/4}\bigl(\e^{3/4} \nrm{f}{2}+\nrm{g}{2}\bigr),\quad j\in\{0,1,2\}.
		\ee
		
		(ii) Let $q\in[2,\infty]$ and $(f,g)\in W^{1,q}(I)\times L^q(I)$. Then,
		for any integer $j\in[0,4]$, we have
		\be{res1.bis}
		\|A_{1,\e}f\|_{W^{j,q}}\le  K \e^{(3-j)/4}\left(\|f\|_q+\e^{1/4}\|f'\|_q\right)
		\ee
		and
		\be{res1.bis2}
		\|A_{2,\e}g\|_{W^{j,q}}\le  K \e^{-j/4}\|g\|_q.
		\ee
		\smallskip
		
		(iii) 
	Let  $K_1>0$ and assume that 
			\be{hypK1}
			k\in H^2(I),\quad \|k\|_{H^2(I)}\le K_1.
			\ee
			Then $A_{2,\e}\in {\mathcal{L}}(H^2(I), H^4(I))$ and
			\be{res2.bis}
			\|A_{2,\e}\|_{\mathcal{L}(H^2(I), H^{2+j}(I))}\le  \tilde K \e^{-j/4},\quad j\in\{0,1,2\},
			\ee
			where the constant $\tilde K$ depends only on $k_1,k_2,K_1,L$.
	\end{prop}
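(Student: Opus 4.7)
I would handle the three assertions in order, with the technical core being the sharp $L^2$ bound for $A_{1,\e}$ in part (i). To prove (i), I start with the self-adjoint case $A_{2,\e}$: testing the weak form \eqref{linearpbmweak} with $v = u := A_{2,\e}g$ and using $k \ge k_1$ yields
\[
k_1\|u\|_2^2 + \e\|u''\|_2^2 \le \|g\|_2 \|u\|_2,
\]
whence $\|u\|_2 \le K\|g\|_2$ and $\|u''\|_2 \le K\e^{-1/2}\|g\|_2$ by Young's inequality, and $\|u'\|_2 \le K\e^{-1/4}\|g\|_2$ from the one-dimensional interpolation $\|u'\|_2 \le C\|u\|_2^{1/2}(\|u\|_2 + \|u''\|_2)^{1/2}$. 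For $A_{1,\e}$, a direct energy estimate would be off by a factor $\e^{1/4}$, so the key observation is to use duality: for arbitrary $h \in L^2(I)$, setting $\phi := A_{2,\e}h$ and exploiting the symmetry of the bilinear form $B(u,v) := \int_I kuv + \e\int_I u''v''$ in the two weak formulations for $u = A_{1,\e}f$ and $\phi$ gives
\[
(A_{1,\e}f, h)_{L^2} = B(u, \phi) = B(\phi, u) = -\e(f, \phi')_{L^2},
\]
so by duality and the bound on $\phi'$ from the previous step, $\|A_{1,\e}f\|_2 \le \e\|f\|_2 \cdot K\e^{-1/4} = K\e^{3/4}\|f\|_2$. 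Returning to the energy identity $\e\|u''\|_2^2 \le \e\|f\|_2\|u'\|_2$ with this improved $L^2$-control on $u$, the interpolation for $\|u'\|_2$ combined with Young's inequality produces $\|u''\|_2 \le K\e^{1/4}\|f\|_2$, and the intermediate $\|u'\|_2$ bound follows; the decomposition \eqref{defA1A2a} then yields \eqref{EstimH2}.

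For part (ii), the case $q = 2$ with $j = 4$ follows by inverting the equation, $u^{(4)} = f' + (g - ku)/\e$, and $j = 3$ by interpolation between $j = 2$ and $j = 4$. For $q \in (2, \infty]$ I would first establish the $L^\infty$ endpoint via the one-dimensional Green's function $G_\e$ of $k + \e\partial_s^4$ with the natural boundary conditions: under the natural scaling $s \sim \e^{1/4}$, it decays exponentially at that length scale, so $\|G_\e(s, \cdot)\|_{L^1(I)} \le K$ uniformly in $s, \e$ and $\|\partial_s^j G_\e(s, \cdot)\|_{L^1(I)} \le K\e^{-j/4}$. Young's convolution inequality then delivers the $L^\infty$ version of \eqref{res1.bis}--\eqref{res1.bis2}, the boundary contribution in the $f$-case producing the $\|f\|_q + \e^{1/4}\|f'\|_q$ structure. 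Riesz-Thorin interpolation between the $L^2$ case from (i) and the $L^\infty$ endpoint then handles intermediate $q$.

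For part (iii), the key input is that $H^2(I)$ is a Banach algebra in one dimension, so \eqref{hypK1} implies $\|ku\|_{H^2} \le CK_1\|u\|_{H^2}$. Differentiating the equation $ku + \e u'''' = g$ twice, the function $v := u''$ satisfies
\[
kv + \e v'''' = g'' - k''u - 2k'u',
\]
with the clamped boundary conditions $v = v' = 0$ on $\partial I$ inherited from $u'' = u''' = 0$. A direct energy estimate in $H^2_0$ (testing with $v$), combined with the $H^2$-algebra control on $ku$ and a bootstrap exploiting the lower-order bounds from (i), yields the $\e$-independent $H^2$-bound for $j = 0$ and the $\e^{-1/2}$-scaling for $j = 2$; the intermediate case $j = 1$ follows by interpolation.

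The main obstacle is the duality argument in (i): a direct energy test is intrinsically sub-optimal because the source $\e f'$ pairs against $u'$ rather than $u$ itself, and the symmetry observation is exactly what transfers the $\e^{1/4}$ loss into the one-derivative bound for $A_{2,\e}$. Secondary delicate points include controlling the Green's function and its boundary corrections in (ii) with the correct $\e$-scaling, and closing the $H^2$-bootstrap in (iii) for variable $k$, where the constants depend genuinely on the full norm $K_1$.
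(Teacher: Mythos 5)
Your treatment of part (i) is a genuine alternative to the paper's. The paper proves \eqref{EstimH2} by a single energy estimate, choosing the interpolation parameter $\eta$ and the Young parameters $\delta_1,\delta_2$ as explicit functions of $\e$ so that the $\e^{3/4}$ prefactor on $\|f\|_2$ falls out directly. Your duality argument
\[
(A_{1,\e}f,h)_{L^2}=B(A_{1,\e}f,A_{2,\e}h)=-\e(f,(A_{2,\e}h)')_{L^2}
\]
is correct: it exploits the symmetry of $B$ and the already-known $\e^{-1/4}$ bound on $\|(A_{2,\e}h)'\|_2$ to transfer the gain, and your bootstrap from $\|u\|_2\le K\e^{3/4}\|f\|_2$ back through the energy identity $\|u''\|_2^2\le \|f\|_2\|u'\|_2$ and the 1-D interpolation inequality does close (splitting into the cases $\|u''\|_2\gtrless\|u\|_2$). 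This is cleaner conceptually than the paper's calibration, at the mild cost of a two-step bootstrap.

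Part (ii) is where I see a genuine gap. The paper establishes the $q=\infty$ endpoint for $j=0$ by a contradiction-plus-rescaling argument: assume the estimate fails along a sequence, rescale $\sigma=(s-s_i)/\e_i^{1/4}$, extract a $W^{4,\infty}_{loc}$ limit, and use the local energy identity \eqref{intalpha} on expanding intervals (plus Fatou) to force the limit to vanish, contradicting normalization. This uses nothing beyond the $L^2$ theory, compactness, and the bound $k_1\le k\le k_2$ with $k$ merely in $L^\infty$. Your proposal instead asserts a pointwise decay $|\partial_s^j G_\e(s,s')|\lesssim \e^{-(j+1)/4}e^{-c|s-s'|/\e^{1/4}}$ for the Green's function of $k+\e\partial_s^4$ with natural boundary conditions. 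For a fourth-order operator there is no maximum principle, so this decay is not free; for constant $k$ it is explicit, but for variable $k\in L^\infty(I)$ it requires an Agmon-type weighted-energy argument which you do not give and which is not trivially uniform in the rough coefficient. Moreover, your representation formula must incorporate the inhomogeneous boundary condition $u'''=f$ on $\partial I$ (a boundary corrector, not just a bulk convolution against $\e f'$), and you wave at this with ``the boundary contribution producing the $\|f\|_q+\e^{1/4}\|f'\|_q$ structure'' without carrying it out. As written, this part of the argument would not withstand scrutiny; the paper's rescaling argument is the more robust route precisely because it avoids constructing or estimating $G_\e$ for rough $k$.

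Part (iii) matches the paper: differentiate the equation twice, observe $v:=u''$ satisfies a clamped problem $kv+\e v''''=g''-k''u-2k'u'$ with $v=v'=0$ on $\partial I$, test with $v$, and use the $H^2$-algebra property to control the commutator terms by $\|k\|_{H^2}\|u\|_{H^2}\lesssim K_1\|g\|_{H^2}$; $j=1$ follows by interpolation. This is essentially verbatim the paper's proof of (iii).
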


	\begin{proof}[Proof of Proposition~\ref{lemresolvent0.bis}]
		(i) This is an immediate consequence of Lax-Milgram's Theorem. 
		
		(ii) The weak solution $u$ satisfies the differential equation in \eqref{linearpbm2} in the sense of distributions and,
		assuming $f\in H^1(I)$, we have
		$u''''=\e^{-1}(g+\e f'-k u)\in L^2(I)$,
		so that $u\in H^4(I)\subset C^3(\overline J)$.
		As one has enough regularity, starting from the weak formulation and integrating by parts one shows that
		$$
		\bigl[(u'''-f)v-u''v'\bigr]_0^1 	=\int_I \bigl(u''''-f'+\e^{-1}(k u- g)\bigr)v=0
		\quad\hbox{ for all $v\in H^2(I)$.}
		$$
		It follows easily that $u'''=f$ and $u''=0$ on $\partial I$.
		
		Conversely, if $f\in H^1(I)$ and $u$ a strong solution, by multiplying by $v\in H^2(I)$ and integrating by parts, we easily obtain 
		\eqref{linearpbmweak}.
	\end{proof}

	\begin{proof}[Proof of Proposition~\ref{lemresolvent.bis}]
		(i) Recall the following interpolation inequality (which is a consequence of \cite[Theorem 7.37, p.198]{Leoni.Book}):
		If $|J|\ge L_0>0$, there exists $K=K(L_0)>0$ such that, for all $\eta>0$
		\be{interpol-eta}
		\|u'\|_{L^2(J)}\le \eta\|u''\|_{L^2(J)}+ K(1+\eta^{-1})\|u\|_{L^2(J)}.
		\ee
		For $\delta_1,\delta_2,\eta>0$ to be fixed, the choice $v=u$ in \eqref{linearpbmweak} yields
		$$\begin{aligned}	
			\int_I &\bigl(ku^2+\e{u''}^2\bigr)(s) ds 
			\leq   \delta_1  \e \nrm{u'}{L^2}^2 + \delta_1^{-1}\e\nrm{f}{2}^2 + \delta_2 \nrm{u}{2}^2+ \delta_2^{-1} \nrm{g}{2}^2 \\
			&\leq   \delta_1  \e\eta \nrm{u''}{L^2}^2+(C_0(1+\eta^{-1})\delta_1 \e+\delta_2)\nrm{u}{L^2}^2+ \delta_1^{-1}\e\nrm{f}{2}^2 + \delta_2^{-1} \nrm{g}{2}^2,
		\end{aligned}$$
		with $C_0=C_0(k_1,k_2)>0$.
		Choosing $\delta_1=\min(\frac12,\frac{k_1}{6 C_0})\e^{-1/2}$, $\eta=(2\delta_1)^{-1}$
		and $\delta_2=\frac{k_1}{3}$, we get
		$$C_0(1+\eta^{-1})\delta_1 \e\le C_0(1+2\delta_1)\frac{k_1}{6 C_0}\e^{1/2}
		\le 2C_0\e^{-1/2}\frac{k_1}{6 C_0}\e^{1/2}=\frac{k_1}{3},
		$$
		so that
		$$k_1\nrm{u}{L^2}^2+\e\nrm{u''}{L^2}^2
		\leq  \frac{\e}{2} \nrm{u''}{L^2}^2+\frac{2k_1}{3}\nrm{u}{L^2}^2+\frac{3}{k_1}\nrm{g}{2}^2 +
		2\e^{3/2}\nrm{f}{2}^2,$$
		hence \eqref{EstimH2} for $j\in\{0,2\}$. The case $j=1$ follows from \eqref{interpol-eta} with $\eta=\e^{1/4}$.

		\smallskip
		(ii) First consider the case $q=2$. For $j\in\{0,1,2\}$, \eqref{res1.bis} and \eqref{res1.bis2} follow from assertion~(i). 
		For $j=4$ it is then a consequence of $u''''=\e^{-1}(g-k u)+f'$. The result for $j=3$ 
		is then obtained by interpolating through \eqref{interpol-eta}.
		
		We next consider the case $q=\infty$ and establish the $L^\infty$ estimate by means of a contradiction and rescaling argument.
		Thus assume that \eqref{res1.bis} or \eqref{res1.bis2} with $j=0$ fails. Then there exist sequences $\e_i\in (0,1)$, 
		$k_i, f_i\in L^\infty(I)$, 
		$g_i\in W^{1,\infty}(I)$ such that,  denoting by $u_i$ the corresponding solutions, we have
		$$k_1\le k_i(s)\le k_2,\quad \|g_i\|_\infty+\e_i^{3/4}\|f_i\|_\infty+\e_i\|f_i'\|_\infty=1,\quad  
		M_i:=\|u_i\|_\infty\to \infty.$$
		Pick $s_i\in \overline I$ such that $|u_i(s_i)|=M_i$ and define the rescaled functions
		$$v_i(\sigma)=M_i^{-1}u_i(s_i+\e_i^{1/4}\sigma),\quad \sigma\in 
		J_i=[a_i,b_i]:=[-\e_i^{-1/4}s_i,\e_i^{-1/4}(L-s_i)].$$ 
		Since $u_i$ is a strong solution by Proposition~\ref{lemresolvent0.bis}(ii), we have $v_i\in W^{4,\infty}(J_i)$ and $v_i$ satisfies
		\be{linearpbm}
		\left\{\hskip 2mm\begin{aligned}
			&\tilde k_i v_i+v_i''''=\e_i^{3/4}\tilde f_i'+\tilde g_i,&\quad& a.e.\ \sigma\in J_i, \\
			&v_i''=0,\; v_i'''=\e_i^{3/4}\tilde{f}_i, &\quad& \sigma \in\partial J_i,
		\end{aligned}
		\right.
		\ee
		with $\tilde k_i(\sigma)=k_i(s_i+\e_i^{1/4}\sigma)$, $\tilde f_i(\sigma)=M_i^{-1}f_i(s_i+\e_i^{1/4}\sigma)$
		and $\tilde g_i(\sigma) := M^{-1}_ig_i(s_i+\e_i^{1/4}\sigma)$.
		Then $\|v_i\|_{L^\infty(J_i)}=|v_i(0)|=1$ and
		\be{w-W4a}
		\begin{aligned}
			\e_i^{3/4}\|\tilde f_i'\|_{L^\infty(J_i)}
			&+\e_i^{3/4}\|\tilde f_i\|_{L^\infty(J_i)}+\|\tilde g_i\|_{L^\infty(J_i)} \\
			&=M_i^{-1}\bigl(\e_i\|f_i'\|_{L^\infty(I)}+\e_i^{3/4}\|f_i\|_{L^\infty(I)}+\|g_i\|_{L^\infty(I)}\bigr)\le M_i^{-1}.
		\end{aligned}
		\ee
		In particular, $\|v_i''''\|_{L^\infty(J_i)}\le c$. Here and in the rest of the proof, $c$ denotes a 
			generic positive constant independent of $i$. We deduce
		\be{w-W4}
		\|v_i\|_{W^{4,\infty}(J_i)}\le c
		\ee
		by interpolation
		(note that $|J_i|\ge L$ due to $\e_i<1$).
		Let $\alpha<\beta$ be such that $[\alpha,\beta]\subset J_i$. 
		 Multiplying the differential equation \eqref{linearpbm} with $v_i$ and integrating by parts, we obtain
		\be{intalpha}
		\int_\alpha^\beta \tilde k_i v_i^2+(v_i'')^2=\bigl[v_i'v_i''-v_iv_i'''\bigr]_\alpha^\beta
		+\int_\alpha^\beta (\e_i^{3/4} \tilde f_i'+\tilde g_i) v_i d \sigma .
		\ee

		$\bullet$ First consider the case when $a_i$ and $b_i$ are bounded. Applying \eqref{intalpha} with 
		$\alpha=a_i$ and $\beta=b_i$ and using \eqref{w-W4a}, \eqref{w-W4},
		along with the boundary conditions in \eqref{linearpbm}, yields
		$$\begin{aligned}
			1&=\|v_i\|_{L^\infty(J_i)}\le c\int_{a_i}^{b_i} v_i^2+(v_i'')^2 
			\le c\e_i^{3/4}\bigl|\bigl[ \tilde f_iv_i\bigr]_{a_i}^{b_i}\bigr|+c\int_{a_i}^{b_i} (\e_i^{3/4} \tilde f_i'+\tilde g_i)^2 \\
			&\le c\e_i^{3/4}\| \tilde f_i v_i\|_{L^\infty(J_i)}+(b_i-a_i)
			\bigl(\e_i^{3/4} \|\tilde f_i'\|_{L^\infty(J_i)}+\|\tilde g_i\|_{L^\infty(J_i)}\bigr)^2\to 0,\quad i\to\infty,
		\end{aligned}$$
		a contradiction.
		\smallskip
		
		$\bullet$ Next assume that $a_i\to-\infty$ along a subsequence and that $b_i\ge 0$ is bounded. 
		Owing to \eqref{w-W4}, by passing to a further subsequence, we may assume that there exists $w\in W^{4,\infty}((-\infty,0])$ 
		such that $v_i\to w$ in $W^{3,\infty}_{loc}((-\infty,0])$, hence in particular
		\be{w-normalized}
		w(0)=1.
		\ee
		Fix any $\alpha\in (-\infty,0)$.
		Applying \eqref{intalpha} with $\beta=b_i$ and using the boundary conditions in \eqref{linearpbm} at $\sigma=b_i$ and 
		\eqref{w-W4a}, \eqref{w-W4}, we get, for $i\ge i_0(\alpha)$ large enough,
		$$\begin{aligned}
			\int_\alpha^0 v_i^2+(v_i'')^2
			&\le \int_\alpha^{b_i} v_i^2+(v_i'')^2 
			\le c\bigl|(v_i'v_i''-v_iv_i''')(\alpha)\bigr|+c\e_i^{3/4}\bigl|(\tilde f_iv_i)(b_i)\bigr|
			+c\int_\alpha^{b_i}  (\e_i^{3/4} \tilde f_i'+\tilde g_i)^2 \\
			&\le c\bigl(|v_i(\alpha)|+|v''_i(\alpha)|\bigr)+cM_i^{-1}+cM_i^{-2}(b_i-\alpha).
		\end{aligned}$$
		By Fatou's lemma, it follows that 
		\be{w-Fatou}
		\int_\alpha^0 w^2+(w'')^2\le c\bigl(|w(\alpha)|+|w''(\alpha)|\bigr)\le c
		\quad\hbox{for all $\alpha\in (-\infty,0)$.}
		\ee
		Consequently, $w\in H^2((-\infty,0])$, hence there exists a sequence $\alpha_j\to -\infty$ such that
		$\eta_j:=|w(\alpha_j)|+|w''(\alpha_j)|\to 0$. Going back to \eqref{w-Fatou} with $\alpha=a_j$, we deduce that
		$\int_{\alpha_j}^0 w^2\le c\eta_j$,
		hence $w\equiv 0$ on $(-\infty,0]$ upon letting $j\to\infty$. But this contradicts \eqref{w-normalized}.
		\smallskip
		
		$\bullet$ The cases when $b_i\to\infty$ along a subsequence, with $a_i$ either bounded or unbounded,
		are treated similarly.
		\smallskip
		
		We conclude that \eqref{res1.bis} and \eqref{res1.bis2} are true for $j=0$  and $q=\infty$.
		\smallskip
		
		Now, using $u''''=\e^{-1}(-k u+ g+\e f')$ a.e. in $I$ (cf.~Proposition~\ref{lemresolvent0.bis}(ii)), we deduce that
		$$	\|u''''\|_\infty\le K\e^{-1}\left(\|g\|_\infty +\e^{3/4}\|f\|_\infty+\e\|{f'}\|_\infty\right)$$
		i.e., \eqref{res1.bis}, \eqref{res1.bis2} for $j=4$. The case $j\in\{1,2,3\}$   and $q=\infty$ then follows by interpolation. 
		\smallskip
		
		Finally, the case $q\in(2,\infty)$ follows by interpolating between the cases $q=2$ and $q=\infty$.
		
		\smallskip
		
		(iii)  Let $g\in H^2$ and set $u=A_{2,\e}g$. Since $\e u''''=g-ku \in H^2$, 	we have $u\in H^6$.
			Differentiating twice, we see that $w=u''$ satisfies
			$\e w''''=g''-k''u-2k'u'-ku''$, i.e. 
			$$kw+\e w''''=\hat g:=g''-k''u-2k'u'.$$
			Moreover, $w=w'=0$ on $\partial I$.
			 Multiplying with $w$ and integrating by parts, we obtain
			$$\begin{aligned}
				\int_0^L (k_1 w^2+\e w''^2)ds
				&\le \int_0^L (k w^2+\e w''^2)ds=\bigl[w'w''-w w'''\bigr]_0^L +\int_0^L  \hat gw ds 
				\le \frac{k_1}{2}\int_0^L  w^2ds+\frac{1}{2k_1}\int_0^L \hat g^2 ds.
			\end{aligned}$$
			This combined with $\|u\|_2\le K\|g\|_2$ (cf.~assertion (i)) yields
			$$\|u\|_2^2+\|u''\|_2^2+\e\|u''''\|_2^2\le K\|\hat g\|_2^2\le K\|k\|^2_{H^2}\|g\|^2_{H^2}.$$
			This implies \eqref{res2.bis} for $j\in\{0,2\}$ and the case $j=1$ follows by interpolation.
	\end{proof}

	\section{Local existence-uniqueness}\label{SecLoc}

	For given $\tau\ge 0$, we define our working space
$$X_\tau=L^\infty(-\infty,\tau;H^2(I))$$
with norm
$$\|z\|_{X_\tau}=\sup_{t\in(-\infty,\tau)} \|z(t)\|_{H^2(I)},\quad \tau\ge 0.$$ 
Fix $\e\in(0,1)$ and $\tau>0$.
A (weak) solution of \eqref{mainweak} on $(0,\tau)$ is a function $z\in  X_\tau$
such that
\be{mainweak2}
\left\{\begin{aligned}
	\int_I \cL_\e [z](t) v ds+\int_I \bigl(z''(t) v'' + F'(z'(t)) v'\bigr)ds 
	&= 0,\quad\forall v\in H^2(I), &&a.e.\ t\in(0,\tau),\\
	z(t)&=z_p(t), &&a.e.\  t<0
\end{aligned}
\right.
\ee
(here we omit the variable $s$ and the subscript $\e$ without risk of confusion).

Let the operators $A_{1,\e}, A_{2,\e}$ be defined by 
 Proposition~\ref{lemresolvent0.bis}
with $k(s)=\mu(s)$.
If $z\in   X_\tau$ 
is a solution of \eqref{mainweak2} on $(0,\tau)$,  
then 
 it satisfies
\be{pbmloc2}
z(t)=
\begin{cases}
	A_{1,\e}F'(z'(t))+A_{2,\e}\ds\int_0^\infty z(t-\e a)\rho(\cdot,a) da,&a.e.\ t\in(0,\tau), \\ 
	\noalign{\vskip 1mm}
	z_p(t),& { a.e.\ } t<0.
\end{cases}
\ee
Conversely, for a given function $z\in X_\tau$, we have 
$F'(z'(t))\in L^\infty(I)$ and $\int_0^\infty z(t-\e a)\rho(\cdot,a) da\in L^\infty(I)$ 
for a.e.~$t\in(0,\tau)$, and 
 \eqref{pbmloc2} implies \eqref{mainweak2}.

	The following theorem is our basic local existence-uniqueness result,
	which in turn defines a suitable notion of maximal solution.
	
	\begin{thm} \label{thmloc}
		Assume \eqref{hyppbm1loc}, \eqref{hyppbmloc10} and  $z_p\in X_0$. 
		There exist constants $c_i>0$, with $c_2<1$, 
		depending only on $\rho, L$, such that the following properties are true for all $\e\in(0,\e_1]$,
		{ where}
		\be{BUalt0}
		\e_1:=c_2R_0^{-8}\ \hbox{ and }\  R_0:=1+c_1\|z_p\|_{X_0}.
		\ee
		\begin{itemize}
			\item[(i)] 
			 For $T=c_3\e$, problem \eqref{pbmloc2} admits a (unique) solution 
			$z\in X_T$ such that 
			$$\|z\|_{X_T}\le R_0.$$ 
			\item[(ii)]
			For any $\tau>0$, problem \eqref{pbmloc2} admits at most one solution $z\in X_\tau$ such that
			\be{BUalt2}
			\|z\|_{X_\tau}\le c_4\e^{-1/8}.
			\ee
			\item[(iii)]
			Let $z$ be the local solution given by assertion (i) and set
			$$T^*=\sup\bigl\{\tau>0;\ \hbox{$z$ extends to a solution satisfying \eqref{BUalt2}}\bigr\}.$$
			If $T^*$ is finite, then
			\be{BUalt2b}
			\|z\|_{X_{T^*}}\ge c_5\e^{-1/8}.
			\ee
		\end{itemize}
	\end{thm}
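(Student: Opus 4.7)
The plan is to prove (i) by a Banach fixed point argument on the closed subset
$$B_{R_0}:=\bigl\{z\in X_T:\; z(t)=z_p(t)\text{ for a.e.\ }t<0\text{ and }\|z\|_{X_T}\le R_0\bigr\},$$
with $T=c_3\e$, $R_0=1+c_1\|z_p\|_{X_0}$, and the constants $c_1,c_3$ to be adjusted. The fixed-point map is the one suggested by \eqref{pbmloc2}:
$$\Phi(z)(t):=A_{1,\e}\bigl[F'(z'(t))\bigr]+A_{2,\e}\bigl[g(z)(t)\bigr],\qquad g(z)(t):=\int_0^\infty z(t-\e a)\rho(\cdot,a)\,da,$$
for $t\in(0,T)$ and $\Phi(z)(t):=z_p(t)$ for $t<0$, where the resolvents $A_{1,\e},A_{2,\e}$ of Proposition~\ref{lemresolvent.bis} are taken with $k(s)=\mu(s)$ (admissible thanks to \eqref{hyppbm1loc}--\eqref{hyppbmloc10}).

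The crucial structural observation, and the point that makes this approach succeed on the short time $T=c_3\e$, is the splitting
$$g(z)(t)=\int_0^{t/\e}z(t-\e a)\rho(\cdot,a)\,da+\int_{t/\e}^\infty z_p(t-\e a)\rho(\cdot,a)\,da,\qquad t\in(0,T).$$
Since $t/\e\le c_3$, the first ``present'' integral involves only $a\in(0,c_3)$, so it contributes $\le\eta(c_3)\|z\|_{X_T}$ in $H^2$ with $\eta(c_3):=\int_0^{c_3}\|\rho(\cdot,a)\|_{H^2}\,da\to 0$ as $c_3\to 0^+$ (thanks to \eqref{hyppbm1loc}); the second ``past'' integral is independent of $z$ and bounded by $C\|z_p\|_{X_0}$. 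Plugging this into \eqref{res2.bis} (with $j=0$) and using \eqref{res1.bis} (with $j=2$, $q=2$) together with the elementary estimates $\|F'(z')\|_2+\e^{1/4}\|(F'(z'))'\|_2\le CR_0^3$ (valid on $B_{R_0}$ thanks to $H^2(I)\hookrightarrow W^{1,\infty}(I)$ and the cubic growth of $F'$) yields both the ball-stability bound
$$\|\Phi(z)\|_{X_T}\le C\|z_p\|_{X_0}+C\eta(c_3)R_0+C\e^{1/4}R_0^3$$
and the Lipschitz estimate
$$\|\Phi(z_1)-\Phi(z_2)\|_{X_T}\le\bigl(C\eta(c_3)+C\e^{1/4}R_0^2\bigr)\|z_1-z_2\|_{X_T},$$
the factor $R_0^2$ coming from the quadratic Lipschitz constant of $F'$ on $[-R_0,R_0]$.

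The constants are then tuned in sequence: $c_1$ is chosen so that $C\|z_p\|_{X_0}\le R_0/3$; next $c_3$ is made small so that $C\eta(c_3)\le 1/3$; finally $c_2$ is chosen small enough that $\e\le c_2R_0^{-8}$ forces $C\e^{1/4}R_0^2\le 1/3$ (equivalently, $\e^{1/4}\le(3C)^{-1}R_0^{-2}$), which simultaneously bounds the self-map term $C\e^{1/4}R_0^3\le R_0/3$ and the nonlinear part of the contraction. The three choices together yield $\Phi(B_{R_0})\subset B_{R_0}$ and a strict contraction of ratio $\le 2/3$, hence the unique fixed point by Banach's theorem. The main subtlety is precisely this balancing act: without the past/present splitting of $g(z)$, the nonlocal term would contribute a factor $\gtrsim R_0$ in $X_T$ that cannot be absorbed by smallness of $T$ or $\e$, and the specific exponent $-8$ in $\e_1=c_2R_0^{-8}$ reflects the quadratic Lipschitz growth $R_0^2$ of $F'$ matched with the $\e^{1/4}$ gain provided by the $A_{1,\e}$ estimate.

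For (ii), given $z_1,z_2\in X_\tau$ satisfying \eqref{BUalt2}, the difference $u=z_1-z_2$ vanishes on $(-\infty,0]$ and satisfies the same identity; the bound $\|z_i\|_{X_\tau}\le c_4\e^{-1/8}$ yields $C\e^{1/4}\|z_i\|_{X_\tau}^2\le Cc_4^2\le 1/3$ for $c_4$ small, while the $A_{2,\e}$-contribution can be made $\le 1/3$ on a small interval $(0,T_1)$, with $T_1$ depending only on $\rho,L,\e$. This forces $u\equiv 0$ on $(0,T_1)$, and iterating the argument (translating the time origin, using that $z_1=z_2$ on $(-\infty,kT_1]$) propagates uniqueness to all of $(0,\tau)$. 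For (iii), if $T^*<\infty$ and $\|z\|_{X_{T^*}}<c_5\e^{-1/8}$, then (i) applied with past data $z|_{(-\infty,T^*]}$ would yield an extension on $(T^*,T^*+c_3\e)$; picking $c_5$ small enough that $c_2(1+c_1c_5\e^{-1/8})^{-8}\ge\e$ ensures that the current $\e$ is admissible for the new past data, contradicting the maximality of $T^*$.
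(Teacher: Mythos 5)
Your approach to parts (i) and (ii) is essentially the paper's. The Banach fixed point on the ball $B_{R_0}\subset X_T$ with $T=c_3\e$, the splitting of the nonlocal term into the ``present'' integral over $a\in(0,t/\e)$ and the ``past'' integral over $a>t/\e$, the use of the resolvent estimates \eqref{res1.bis}--\eqref{res2.bis}, and the tuning of $c_1$, $c_3$, $c_2$ in that order all match the proof in Section~\ref{SecLoc}. The only cosmetic difference is that you control the present integral by $\eta(c_3)=\int_0^{c_3}\|\rho(\cdot,a)\|_{H^2}\,da$, whereas the paper bounds it by $M_1T/\e=M_1c_3$ using the $L^\infty_a$-norm of $\rho$; both give a factor that can be made small by choosing $c_3$ small. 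Your remark that the exponent $-8$ in $\e_1=c_2R_0^{-8}$ comes from matching the $\e^{1/4}$ gain of $A_{1,\e}$ against the quadratic Lipschitz constant $R_0^2$ of $F'$ is also exactly the paper's bookkeeping, and your iteration for (ii) is the paper's argument phrased slightly differently.

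Part (iii), however, has a genuine gap as written. You propose to apply assertion (i) with the new past data $\hat z_p=z(\cdot+T^*)$, and you claim $c_5$ can be chosen so that $c_2\bigl(1+c_1c_5\e^{-1/8}\bigr)^{-8}\ge\e$ for every admissible $\e$, i.e.\ that the given $\e$ lies below the new threshold $\hat\e_1:=c_2\hat R_0^{-8}$ built from $\hat R_0\le 1+c_1c_5\e^{-1/8}$. Rearranged, this demands $\e^{1/8}+c_1c_5\le c_2^{1/8}$ for all $\e\le\e_1=c_2R_0^{-8}$. But when $R_0=1$ (e.g.\ $z_p\equiv 0$) one has $\e_1=c_2$, and letting $\e\to\e_1$ forces $c_1c_5\le 0$, which is impossible; shrinking $c_2$ does not help, since it scales both sides the same way. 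The trap is that $\hat R_0$ may be much larger than the original $R_0$, so the new threshold $\hat\e_1$ may drop below $\e$, and then (i) taken as a black box simply does not apply. The paper sidesteps this by invoking not assertion~(i) but the underlying contraction estimate, which (via \eqref{choiceR}) works for \emph{any} radius $R$ in the range $[\,\text{(new) }R_0,\,c_4\e^{-1/8}\,]$, not merely $R=R_0$. The only condition to check is then $\hat R_0\le c_4\e^{-1/8}$, which is strictly weaker than $\e\le\hat\e_1$ (indeed $c_4\e^{-1/8}=2(c_2/\e)^{1/8}$), and it does hold: taking $c_5=c_4/(2c_1)$ and using $\e^{1/8}\le c_2^{1/8}=c_4/2$ gives $\hat R_0\le 1+(c_4/2)\e^{-1/8}\le c_4\e^{-1/8}$. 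So your overall strategy for (iii) is correct, but you must use the contraction in this uniform-in-$R$ form rather than re-applying (i) with a fresh $R_0$-dependent threshold.
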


\begin{rem}
	\begin{compactenum}[(i)]
		\item	 Note that, in a standard blow-up alternative, one would normally 
		have $\infty$ in the right hand side of \eqref{BUalt2b}.
		The current definition reflects the lack of parabolicity of the problem
		 (caused by the delay operator, with memory effect of time scale $\varepsilon$,
		which confers to problem \eqref{mainweak2} some elliptic features).
		
		\item The solution $z$ is generally 
		discontinuous at $t=0$, unless a suitable  
		compatibility condition is imposed on $z_p$.
	 An upper estimate of the jump in terms of $\e$ is given in Proposition~\ref{lem.discont}.
	\end{compactenum}
\end{rem}

 In the rest of the paper, by the solution $z=z_\e$ of \eqref{pbmloc}, we will mean the maximal solution
	defined in Theorem~\ref{thmloc}(iii).

Our next result provides, under additional assumptions on the past data,
further time regularity of the solution  for $t>0$, which will be required 
in order to derive the key energy estimates in Section~\ref{SecEn}.

\begin{prop} \label{thmregtime}
	Assume \eqref{hyppbm1loc}, \eqref{hyppbmloc10}, \eqref{hyppbm1locz0} and \eqref{BUalt0}.
	Then 
	\be{zLip0}
	z_\e \in W^{1,\infty}_{loc}( [0,T^*);H^2(I))
	\ee
	and we moreover have
	\be{zLip}
	\|\partial_tz_\e(t)\|_{H^2(I)}\le C(N_t\e^{-1}+R_2) \exp\bigl\{C\e^{-1}t\bigr\}, 
	\quad  \text{a.e.} \ t\in(0,T^*),
	\ee
	with $N_t=\|z_\e\|_{X_{t}}$ and 
	$R_2:=\|z_p\|_{W^{1,\infty}(-\infty,0;H^2(I))}$.
	\end{prop}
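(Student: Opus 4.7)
The plan is to differentiate the fixed-point identity \eqref{pbmloc2} in time, treating carefully the jump $J:=z_\e(0^+)-z_p(0)$ that $z_\e$ generally carries at the origin, and then exploit the resolvent estimates of Proposition~\ref{lemresolvent.bis} together with a Gronwall inequality. Formally differentiating \eqref{pbmloc2} and accounting for the Dirac contribution induced by the jump, $u:=\partial_t z_\e$ should satisfy
\begin{equation*}
u(t)=A_{1,\e}\!\bigl[F''(z_\e'(t))\,u'(t)\bigr]+A_{2,\e}\!\left[\int_0^\infty u(t-\e a)\rho(\cdot,a)\,da+\e^{-1}J\,\rho(\cdot,t/\e)\right],\quad t>0,
\end{equation*}
with $u(t)=\partial_t z_p(t)$ for $t<0$. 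The jump source term is exactly what is responsible for the factor $N_t\e^{-1}$ appearing in \eqref{zLip}: indeed $\|J\|_{H^2}\le\|z_\e(0^+)\|_{H^2}+\|z_p(0)\|_{H^2}\le 2N_t$.

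\textbf{A priori estimates.} Take the $H^2(I)$-norm of the above identity. For the $A_{1,\e}$ contribution, bound \eqref{res1.bis} (with $q=2$, $j=2$) combined with the 1D algebra property of $H^1(I)$ gives
$\|A_{1,\e}[F''(z_\e')u']\|_{H^2}\le C\e^{1/4}(1+N_t^2)\|u(t)\|_{H^2}$, which can be absorbed in the left-hand side for $\e$ sufficiently small: on $[0,T^*)$ one has $N_t\le c_4\e^{-1/8}$ by \eqref{BUalt2}, whence $\e^{1/4}(1+N_t^2)\le\e^{1/4}+c_4^2$ is $\le 1/(2C)$ provided the constants $c_2,c_4$ fixed in Theorem~\ref{thmloc} are chosen small enough. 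For the two $A_{2,\e}$ terms the sharper bound \eqref{res2.bis} with $j=0$ applies (its constant is independent of $\e$ and the hypothesis $\mu\in H^2(I)$ is provided by \eqref{hyppbm1loc}): the jump piece is $\le C\e^{-1}\|J\|_{H^2}\|\rho(\cdot,t/\e)\|_{H^2}\le C\e^{-1}N_t$, while the convolution, split at $a=t/\e$, produces a past part $\le CR_2$ using \eqref{hyppbm1locz0} and $\|\rho\|_{L^1_a(H^2)}\le C$, and a present part of $H^2$-norm $\le C\e^{-1}\int_0^t\|u(\tau)\|_{H^2}d\tau$ after the substitution $\tau=t-\e a$. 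Collecting,
\begin{equation*}
\|u(t)\|_{H^2}\le C\bigl(\e^{-1}N_t+R_2\bigr)+C\e^{-1}\!\int_0^t\|u(\tau)\|_{H^2}\,d\tau,
\end{equation*}
and Gronwall's inequality yields \eqref{zLip}.

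\textbf{Main obstacle: rigorous derivation via difference quotients.} The chief technical step is to justify the formal differentiation above, i.e. to prove $u\in L^\infty_{\rm loc}([0,T^*);H^2(I))$ and the emergence of the jump term. The plan is to use forward difference quotients $w_h(t):=h^{-1}(z_\e(t+h)-z_\e(t))$, with $z_\e$ extended by $z_p$ on $(-\infty,0)$. Substituting \eqref{pbmloc2} yields, for $t>0$,
$w_h(t)=A_{1,\e}[\phi_h(t)w_h'(t)]+A_{2,\e}\int_0^\infty w_h(t-\e a)\rho(\cdot,a)\,da$
with $\phi_h=\int_0^1 F''(z_\e'+\theta h w_h')\,d\theta$. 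Here $w_h(\tau)$ is controlled in $H^2$ by $R_2$ for $\tau<-h$ and by $2N_t/h$ on the short interval $\tau\in(-h,0)$ where it captures the jump. Splitting the convolution into the ranges $a>(t+h)/\e$, $t/\e<a<(t+h)/\e$ and $a<t/\e$, the middle ``spike'' piece contributes at most $(2N_t/h)\cdot C(h/\e)=CN_t/\e$ in $H^2$ uniformly in $h$, while the other two pieces are treated as in the previous paragraph. This yields the same Gronwall-type estimate uniformly in $h$; compactness then lets us pass to the limit $h\to 0^+$ to identify a limit which coincides with the a.e. classical time derivative of $z_\e$, satisfies the jump-source equation, and obeys \eqref{zLip0} together with \eqref{zLip}.
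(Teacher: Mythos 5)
Your proposal is correct and follows essentially the same strategy as the paper's proof: difference quotients of $z_\e$ applied to the fixed-point identity \eqref{pbmloc2}, absorption of the $A_{1,\e}$ contribution using the resolvent estimate and $\e$ small, a three-way split of the delay convolution isolating the jump spike (giving the $N_t\e^{-1}$ term), the past part (giving $R_2$), and the present part (entering Gronwall), and then Gronwall's lemma. The paper works with backward difference quotients on a full-measure subset of $(0,T)$ and directly reads off the Lipschitz bound without passing to a limit, whereas you use forward difference quotients and conclude by compactness, but these are cosmetic differences; your opening heuristic identifying the Dirac-type jump source $\e^{-1}J\rho(\cdot,t/\e)$ is a nice way to explain where the $N_t\e^{-1}$ factor comes from, and is consistent with what the paper does implicitly.
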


The proof of Theorem~\ref{thmloc} will be carried out by a fixed point argument on problem \eqref{pbmloc2}.

\begin{proof}[Proof of Theorem~\ref{thmloc}]
	 We first note that, owing to  \eqref{hyppbm1loc}, \eqref{hyppbmloc10}, assumptions \eqref{hyplinearpbm}, \eqref{hypK1} of 
		Proposition~\ref{lemresolvent.bis} with $k(s)=\mu(s)$ are satisfied. 
		
		\smallskip
		
		(i) For $T>0$, $R\ge  1+\|z_p\|_{X_0}$ to be determined below, we set
		$$B_{T,R}=\left\{z\in X_T,\ \|z\|_{X_T}\le R \hbox{ and $z(t)=z_p$ for a.e.~$t<0$}\right\},$$
		and consider the fixed point operator
		$\mathcal{T}:B_{T,R}\to X_T$
		defined by
		\be{pbmloc3}
		[\mathcal{T}(z)](t)=
		\begin{cases}
			A_{1,\e}F'(z'(t))+A_{2,\e}\ds\int_0^\infty z(t-\e a)\rho(\cdot,a) da,&a.e.\ t\in(0,T), \\ 
			\noalign{\vskip 1mm}
			z_p(t),& a.e.~t<0.
		\end{cases}
		\ee
		Note that $B_{T,R}$, being a closed subset of the Banach space $X_T$,
		is a complete metric space.
		Recalling \eqref{hyppbm1loc}, we set
		\be{defM1}
		M_1=1+\|\rho\|_{L_a^\infty(0,\infty;H^2(I))}+\|\rho\|_{L_a^1(0,\infty;H^2(I))}.
		\ee
		In what follows we shall respectively denote by $\|\cdot\|_\infty$ and $\|\cdot\|_{H^2}$
		the norms in $L^\infty(I)$ and $H^2(I)$, and keep the variable $s\in I$ implicit.
		Let $z\in B_{T,R}$. 
		Using $F'(z')=4z'(1-{z'}^2)$, 
		estimates \eqref{EstimH2}, \eqref{res2.bis},
		the Sobolev inequality $\|v\|_{W^{1,\infty}(I)}\le C\|v\|_{H^2(I)}$
		and the fact that $\|fg\|_{H^2}\le C\|f\|_{H^2}\|g\|_{H^2}$, we obtain,  for a.e.\ $t\in(0,T)$,
		$$\begin{aligned}
			\|\mathcal{T}(z)(t)\|_{H^2} 
			&\le C\|A_{1,\e}F'(z'(t))\|_{H^2}+\Big\|A_{2,\e}\int_0^{t/\e} z(t-\e a)\rho(a) da \Bigr\|_{H^2} 		+\Big\|A_{2,\e}\int_{t/\e}^\infty z_p(t-\e a)\rho(a) da \Bigr\|_{H^2}\\
			&\le C\e^{1/4} \|F'(z'(t))\|_{L^2}  
			+C\Big\|\int_0^{t/\e} z(t-\e a)\rho(a) da \Bigr\|_{H^2} 
			+C\Big\|\int_{t/\e}^\infty z_p(t-\e a)\rho(a) da \Bigr\|_{H^2}\\
			&\le C\e^{1/4} (1+\|z'(t)\|^2_\infty) \|z'(t)\|_\infty 
			+CM_1\e^{-1}T\sup_{\sigma\in(0,t)}\|z(\sigma)\|_{H^2}
			+CM_1\sup_{\sigma<0}\|z_p(\sigma)\|_{H^2}.
		\end{aligned}$$
		It follows that
		\be{mapping1}
		\|\mathcal{T}(z)\|_{X_T}\le C_0\bigl(\e^{1/4}R^3+M_1\e^{-1}TR+M_1 \|z_p\|_{X_{0}}\bigr),
		\ee
		where $C_0=C_0(\rho,L)\ge 1$ 
			(note that, for a.e.\ $t<0$, we have $\|z(t)\|_{H^2}=\|z_p(t)\|_{H^2}\le C_0M_1 \|z_p\|_{X_{0}}$ since 
			$C_0, M_1\ge 1$).
		Similarly, for all $z_1, z_2\in B_{T,R}$, using also
		\be{zcube}
		|X^3-Y^3|\le 2(X^2+Y^2)|X-Y|,\quad X,Y\in\R,
		\ee
		we get
		$$\begin{aligned}
			&\|\mathcal{T}(z_1)(t)-\mathcal{T}(z_2)(t)\|_{H^2}\\
			&\le C\bigl\|A_{1,\e}\bigl(F'(z'_1(t))-F'(z'_2(t))\bigr)\bigr\|_{H^2}
			+\Big\|A_{2,\e}\int_0^{t/\e} [z_1(t-\e a)-z_2(t-\e a)]\rho(a) da\Bigr\|_{H^2}\\
			&\le C\e^{1/4} \bigl\|F'(z'_1(t))-F'(z'_2(t))\bigr\|_{L^2}
			+\Big\|A_{2,\e}\int_0^{t/\e} [z_1(t-\e a)-z_2(t-\e a)]\rho(a) da\Bigr\|_{H^2}\\
			&\le C\e^{1/4} \bigl(1+\|z'_1(t)\|_\infty+\|z'_2(t)\|_\infty\bigr)^2\|(z'_1-z'_2)(t)\|_\infty +C\Big\|\int_0^{t/\e} [z_1(t-\e a)-z_2(t-\e a)]\rho(a) da\Bigr\|_{H^2}\\
			&\le C\e^{1/4} R^2\|(z'_1-z'_2)(t)\|_\infty
			+CM_1\e^{-1}T\sup_{\sigma\in(0,t)}\|(z_1-z_2)(\sigma)\|_{H^2}.
		\end{aligned}$$
		It follows  (taking $C_0=C_0(\rho,L)$ larger if necessary) that
		\be{mapping2}
		\|\mathcal{T}(z_1)-\mathcal{T}(z_2)\|_{X_T}\le C_0\bigl(\e^{1/4}R^2+M_1\e^{-1}T\bigr)\|z_1-z_2\|_{X_T}.
		\ee
		
		Now set 
		\be{choiceci}
		c_1:=2C_0M_1\quad
		c_2:=4^{-8}C_0^{-4}, \quad
		c_3=(4C_0M_1)^{-1},\quad 
		c_4:=\ts(4C_0)^{-1/2}
		\ee
		and
		$$R_0:=1+c_1\|z_p\|_{X_0}.$$
		For $\e\in(0,1]$, choose $T:=c_3\e$ and any $R$ such that
		\be{choiceR}
		R_0\le R\le c_4\e^{-1/8},
		\ee
		we have
		\be{mapping3}
		C_0\bigl\{\e^{1/4}R^2+M_1\e^{-1}T\bigr\}\le 1/2
		\ee
		and 
		\be{mapping4}
		C_0\bigl(\e^{1/4}R^3+M_1\e^{-1}TR+M_1\|z_p\|_{X_0}\bigr)-R
		\le \bigl\{C_0\bigl(\e^{1/4}R^2+M_1\e^{-1}T\bigr)-\ts\frac12\bigr\}R\le 0.
		\ee
		It follows from \eqref{mapping3}-\eqref{mapping4}
		that 
		\be{mapping0}
		\hbox{$\mathcal{T}$ is a contraction mapping on $B_{T,R}$.}
		\ee
		Consequently, \eqref{pbmloc2} admits a unique fixed point in $B_{T,R}$.
		Assuming $\e\in(0,\e_1]$ with $\e_1:=c_2R_0^{-8}$ and making the particular choice $R=R_0$ in \eqref{choiceR},
		this proves assertion (i).
		
		\smallskip
		
		(ii) Let $\tau>0$ and let $z_1,z_2\in X_\tau$ be solutions satisfying \eqref{BUalt2}.
		Let 
		$$\tau_0=\sup\,\Bigl\{t\in [0,\tau);\ z_1=z_2\ \hbox{a.e. on $(-\infty,t)$}\Bigr\}.$$
		By \eqref{mapping0}
		we know that $\tau_0>0$. Assume for contradiction that
		$\tau_0<\tau$.
		Since $\|z_i\|_{X_\tau}\le R:=c_4\e^{-1/8}$ 
		and $z_1=z_2$ for $t<\tau_0$, the argument leading to \eqref{mapping2},
			with $t/\e$ replaced by $(t-\tau_0)/\e$ implies
		$$\|z_1-z_2\|_{X_t} \le C_0\bigl\{R^2\e^{1/4}+M_1\e^{-1}(t-\tau_0)\bigr\}\|z_1-z_2\|_{X_t},
		\quad \tau_0<t<\tau.$$
		Since $C_0R^2\e^{1/4} \le\ 1/4$ (cf.~\eqref{choiceci}  and \eqref{choiceR}), 
		we deduce that $z_1(t)=z_2(t)$ for $t\ge \tau_0$ close to $\tau_0$: a contradiction.
		
		\smallskip
		
		(iii) By the definition of $T^*$, for each integer $j>{T^*}^{-1}$, there exists a solution 
		$z_j \in X_{T^*-j^{-1}}$ satisfying \eqref{BUalt2} with $\tau=T^*-j^{-1}$.
		By the uniqueness statement in assertion~(ii), we have $z_j=z_{j+1}$ for $t\le T^*-j^{-1}$.
		The desired solution is thus obtained by setting $z:=z_j$ for $t<T^*-j^{-1}$ and $j>{T^*}^{-1}$.

		Next assume for contradiction that $T^*<\infty$ and $\|z\|_{X_{T^*}}\le c_5\e^{-1/8}$,
		where $c_5=c_4/2c_1$.
		Since $\e\le\e_1\le c_2= (c_4/2)^8$, we have
		$\e^{1/8}\le c_4/2=c_1c_5$. 
		It follows that
		$$R_1:=1+c_1\|z\|_{X_{T^*}}\le 1+c_1c_5\e^{-1/8}\le 2c_1c_5\e^{-1/8}=c_4\e^{-1/8}.$$
		 Set $\hat z_p(t)=z(t+T^*)$ for a.e.~$t<0$. We may thus
		apply the above fixed point argument with $\tilde z_p\in X_0$ instead of $z_p$ and
		$R=R_1$ instead of $R=R_0$ in \eqref{choiceR}.
		This provides a time $T>0$ and a function $\hat z\in X_T$ such that
			$\hat z=\hat z_p$ for a.e. $t<0$ and
			$$\hat z(t)=A_{1,\e}F'(\hat z'(t))+A_{2,\e}\ds\int_0^\infty \hat z(t-\e a)\rho(s,a) da,\quad a.e.\ t\in(0,T).$$
			Letting $\tilde z(t):=\hat z(t-T^*)$ for a.e.~$t<T^*+T$, it follows that $\tilde z$ satisfies
			$\tilde z(t)=z(t)$ for a.e.~$t<T^*$ and
			$$\tilde z(t)=A_{1,\e}F'(\tilde z'(t))+A_{2,\e}\ds\int_0^\infty \tilde z(t-\e a)\rho(s,a) da,\quad a.e.\ t\in(0,T^*+T),$$
			along with \eqref{BUalt2}: a contradiction.
\end{proof}

\begin{proof}[Proof of Proposition~\ref{thmregtime}]
We must show that, more precisely, there exists a representative $\tilde z$ of $z$ (i.e.~$\tilde z(t)=z(t)$~a.e.)
such that, for each $T\in (0,T^*)$, $\tilde z$ is Lipschitz continuous from $[0,T]$ to $H^2(I)$ and satisfies the estimate in \eqref{zLip}.
Fixing a representative $z$ and $T\in(0,T^*)$, it suffices to show that there exists $\Sigma\subset (0,T)$
with $|(0,T)\setminus \Sigma|=0$, such that
	\be{LipAux0}
	\|z(t_2)-z(t_1)\|_{H^2}\le C(N_T\e^{-1}+R_2)|t_2-t_1| \exp\bigl\{C\e^{-1}T\bigr\}, \quad\text{ for all  }t_1,\ t_2\in \Sigma
	\ee
	(since one then easily sees that $\tilde z(t):=\lim_{\Sigma\ni t'\to t} z(t')$ exists in $H^2$ for all $t\in [0,T]$,
	coincides with $z$ on $\Sigma$, and has the desired properties).

To this end, we first note that, by \eqref{pbmloc2}, there exists $\Sigma \subset (0,T)$
with $|(0,T)\setminus \Sigma|=0$, such that
	\be{LipAux1}
	z(t)=A_{1,\e}F'(z'(t))+A_{2,\e}\ds\int_0^\infty z(t-\e a)\rho({s,a}) da \quad\text{ for all }t\in J.
	\ee
Fix $t_1,\ t_2\in \Sigma$ with $t_1<t_2$ and set $h:=t_2-t_1$ and $N:=\|z\|_{X_T}$.
	Using \eqref{LipAux1}, \eqref{EstimH2}, \eqref{res2.bis},
	\eqref{zcube} with $X=z'(s,t)$, $Y=z'(s,t-h)$,
	the Sobolev inequality $\|v\|_{W^{1,\infty}(I)}\le C\|v\|_{H^2(I)}$
	and the fact that $\|fg\|_{H^2}\le C\|f\|_{H^2}\|g\|_{H^2}$,
	we deduce that, for all $t\in \Sigma\cap (h+J)$,
	$$\begin{aligned}
		\bigl\|z(t)-z(t-h)\bigr\|_{H^2}
		&\le 4\Bigl\|A_{1,\e}\Bigl\{[z'(1-{z'}^2)](t)-[z'(1-{z'}^2)](t-h)\Bigr\}\Bigr\|_{H^2}+\Big\|A_{2,\e}\int_0^\infty [z(t-\e a)-z(t-h-\e a)]\rho(a) da\Bigr\|_{H^2}\\
		&\le C\e^{1/4} (1+N^2) \|z'(t)-z'(t-h)\|_2+C\int_0^\infty \Big\|[z(t-\e a)-z(t-h-\e a)]\rho(a) \Bigr\|_{H^2} da\\
		&\le C_1\e^{1/4} (1+N^2)  \|z(t)-z(t-h)\|_{H^2}+C_1\int_0^\infty \Big\|[z(t-\e a)-z(t-h-\e a)] \Bigr\|_{H^2} \|\rho(a)\|_{H^2} da,
	\end{aligned}$$
	with $C_1=C_1(\rho, L)>0$.
	On the other hand, setting $\delta_h(t)=\|z(t)-z(t-h)\|_{H^2}$, we note that, for each $t\in (0,T)$, we have
			\be{deltah0}
			\delta_h(t-\e a)=\|z_p(t-\e a)-z_p(t-\e a-h)\|_{H^2}\le R_2h,\quad \hbox{ for a.e.~}a>t/\e.
			\ee
	Moreover, up to replacing $C_0$ in 
	\eqref{mapping1}, \eqref{mapping2} by $\max(C_0,$ $C_1)$, we may assume that
	$$C_1\e^{1/4} (1+N^2)  \le C_1\e_1^{1/4} + C_1c_4^2
	=4^{-2}C_1C_0^{-1}R_0^{-2}+C_1(4C_0)^{-1}\le \ts\frac12.$$
	Using \eqref{defM1} and \eqref{deltah0}, we then obtain, for all $t\in \Sigma \cap (h+\Sigma )$,
	$$\begin{aligned}
		(2C_1)^{-1}\delta_h(t)
		&\le \int_0^\infty \Big\|[z(t-\e a)-z(t-h-\e a)] \Bigr\|_{H^2} \|\rho(a)\|_{H^2} da\\
		&=\int_0^{(t-h)/\e} \delta_h(t-\e a) \|\rho(a)\|_{H^2} da  +\int_{(t-h)/\e}^{t/\e} \delta_h(t-\e a) \|\rho(a)\|_{H^2} da 
		+\int_{t/\e}^\infty \delta_h(t-\e a) \|\rho(a)\|_{H^2} da \\
		&=\e^{-1}\int_h^t \delta_h(\tau) \|\rho(\e^{-1}(t-\tau))\|_{H^2} d\tau +\int_{(t-h)/\e}^{t/\e} \delta_h(t-\e a) \|\rho(a)\|_{H^2} da 
		+\int_{t/\e}^\infty \delta_h(t-\e a) \|\rho(a)\|_{H^2} da \\
		&\le M_1\left(\e^{-1}\int^t_h\delta_h(\tau) d\tau+2N\e^{-1}h+R_2h\right),
	\end{aligned}$$ 
	hence
		\be{LipAux2}
		\delta_h(t)\le C(N\e^{-1}+R_2)h+C\e^{-1}\int^t_h \delta_h(\tau) d\tau =:G(t),\quad \hbox{for all $t\in J\cap (h+J)$.}
		\ee
	Now observing that $|(h,T)\setminus(\Sigma \cap (h+J))|=0$, it follows that the (absolutely continuous) function $G$ satisfies 
	$\bigl(\exp\bigl\{-C\e^{-1}t\bigr\}G(t)\bigr)'\le 0$ for a.e.~$t\in(h,T)$, hence $G(t)\le G(h)\exp\bigl\{C\e^{-1}(t-h)\bigr\}$
	for all $t\in(h,T)$. Going back to \eqref{LipAux2} and noting that $t_2\in \Sigma \cap (h+\Sigma)$ owing to $t_1,\ t_2\in \Sigma$, we obtain
	$$\|z(t_2)-z(t_1)\|_{H^2}=\delta_h(t_2)\le G(t_2)\le G(h)\exp\bigl\{C\e^{-1}T\bigr\}=C(N\e^{-1}+R_2)|t_2-t_1| \exp\bigl\{C\e^{-1}T\bigr\},$$
	hence \eqref{LipAux0}.
	The proposition follows.
\end{proof}

\section{Energy estimates}\label{SecEn}

Let $z=z_\e$ be the solution given by Theorem~\ref{thmloc}, of existence time $T^*_\e$, and denote
\be{defusta}
u_\e (s,t,a) :=  z_\e(s,t)-z_\e(s,t-\e a),\quad 0<t<T^*_\e,\ a>0.
\ee
We define the energy:
\be{defEeps}
E_\e(t) :=\frac{1}{2\e}\int_I \intrp u_\e^2(s,a,t)  \rho(s,a) da ds + \frac{1}{2}\int_I (z_\e''(t))^2 ds+ 
\int_I F(z_\e'(t)) ds. 
\ee
In the following two propositions, we obtain the monotonicity 
 and dissipation property of the energy and,
as a consequence, derive some key estimates.
Special care is given to the dependence with respect to $\e$, 
which will turn out important in the proof of our main results.

\begin{prop}\label{thm.nrj}
	Assume \eqref{hyppbm1loc},  \eqref{eq.kernel0}, \eqref{hyppbmloc10}, \eqref{hyppbm1locz0}.
	 Let $\e\in(0,\e_1)$ where $\e_1$ is given by Theorem~\ref{thmloc}.
	
	(i) We have $E_\e\in W^{1,\infty}_{loc}([0,T^*_\e))$ and
	\be{eq.dissipation0}
	E_\e'(t) = {\frac{1}{2\e^2}}\int_I \intrp u_\e^2 (s,a,t) \partial_a\rho(s,a) da ds\le 0,\quad a.e.\ t\in(0,T^*_{\e}).
	\ee
	
	(ii) For all $0<t_0<t<T^*_{\e}$, we have
	\be{eq.dissipation}
	\int_{t_0}^t\int_I \intrp  u_\e^2 (s,a,\tau) |\partial_a\rho(s,a)| da ds d\tau
	\le 2 \e^2 E_\e(t_0),
	\ee
	\be{globalboundz0step}
	\int_{t_0}^t \bigl\{\|\partial_t z_\e(\tau)\|_{L^2(I)}^2 + \e \|\partial_t z_\e''(\tau)\|_{L^2(I)}^2\bigr\} d\tau \le
	CE_\e(t_0),
	\ee
	\be{globalboundz1step}
	\int_{t_0}^t \big\{ \e^{1/4}\|\partial_t z_\e(\tau)\|_{L^\infty(I)}^2
	+\e^{3/4} \|\partial_t z_\e'(\tau)\|_{L^\infty(I)}^2\bigr\} d\tau \le CE_\e(t_0).
	\ee
\end{prop}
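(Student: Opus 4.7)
The plan is to prove (i) by computing $E_\e'(t)$ term by term via the weak equation, then derive (ii) by combining (i) with an elliptic estimate on $w := \partial_t z_\e$ obtained from time-differentiating the equation.

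For (i), Proposition~\ref{thmregtime} gives $z_\e \in W^{1,\infty}_{loc}([0,T^*_\e);H^2(I))$, which together with the kernel assumptions guarantees $E_\e \in W^{1,\infty}_{loc}$. Differentiating the bending and penalization terms is direct. For the nonlocal term, I write $\partial_t u_\e(s,a,t) = \partial_t z_\e(s,t) - \partial_t z_\e(s,t-\e a)$ and exploit the identity $\partial_a u_\e = \e\, \partial_t z_\e(\cdot, t-\e a)$ to re-express the shifted time derivative as an $a$-derivative. Integrating by parts in $a$ then gives
$$\frac{d}{dt}\frac{1}{2\e}\int_I\int_0^\infty u_\e^2\rho\,dads = \int_I \cL_\e[z_\e]\,\partial_tz_\e\,ds + \frac{1}{2\e^2}\int_I\int_0^\infty u_\e^2\,\partial_a\rho\,dads,$$
the boundary terms vanishing because $u_\e(s,0,t)=0$ and $\rho(s,a)\to 0$ as $a\to\infty$ (deduced from $\rho\ge 0$, $\partial_a\rho\le 0$ and $\rho\in L^1_a$). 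Testing the weak equation \eqref{mainweak2} with $v = \partial_t z_\e$ (legitimate by Proposition~\ref{thmregtime}) shows that the sum of $\int_I\cL_\e[z_\e]\partial_tz_\e\,ds$, $\int_I z_\e''\partial_tz_\e''\,ds$ and $\int_I F'(z_\e')\partial_tz_\e'\,ds$ is zero, leaving $E_\e'(t) = \frac{1}{2\e^2}\int_I\int_0^\infty u_\e^2\partial_a\rho\,dads \le 0$ thanks to \eqref{eq.kernel0}.

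For (ii), estimate \eqref{eq.dissipation} follows by integrating (i) on $(t_0,t)$ and using $E_\e(t)\ge 0$. For \eqref{globalboundz0step}, a difference-quotient argument justified by Proposition~\ref{thmregtime} shows that $w = \partial_t z_\e$ satisfies weakly
$$\mu(s)w + \e w'''' = \e(F''(z_\e')w')' - \e^{-1}\int_0^\infty u_\e\,\partial_a\rho\,da,$$
with boundary conditions $w''=0$ and $w''' = F''(z_\e')w'$. Testing with $w$, integration by parts makes the boundary contributions from the fourth-order and $F''$ terms cancel exactly, leaving $\int_I \mu w^2 + \e(w'')^2\,ds = -\e\int_I F''(z_\e')(w')^2ds - \e^{-1}\int_I w\int_0^\infty u_\e\partial_a\rho\,da\,ds$. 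The $F''$-contribution is absorbed via the interpolation $\|w'\|_2^2 \le C(\|w\|_2\|w''\|_2 + \|w\|_2^2)$ from \eqref{interpol-eta}, provided $\e$ is sufficiently small, while the last term is controlled by Cauchy--Schwarz through $\|w\|_2\sqrt{D_\e(t)}$, where $D_\e(t) := \e^{-2}\int_I\int_0^\infty u_\e^2|\partial_a\rho|dads$. Young's inequality then yields $\|w\|_2^2 + \e\|w''\|_2^2 \le C D_\e(t)$, and integrating in $\tau$ gives \eqref{globalboundz0step}.

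The $L^\infty$ bounds \eqref{globalboundz1step} follow by applying the one-dimensional Sobolev estimate $\|v\|_\infty^2 \le C(\|v\|_2^2 + \|v\|_2\|v'\|_2)$ to $w$ and $w'$, combined with \eqref{interpol-eta}, leading to $\|w\|_\infty^2 \le C(\|w\|_2^2 + \|w\|_2^{3/2}\|w''\|_2^{1/2})$ and an analogous bound for $\|w'\|_\infty^2$. Young's inequality with the appropriate weights converts these into $\e^{1/4}\|w\|_\infty^2 + \e^{3/4}\|w'\|_\infty^2 \le C(\|w\|_2^2 + \e\|w''\|_2^2) \le CD_\e(t)$, and integration in $\tau$ finishes the proof. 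The main technical obstacle lies precisely in this $\e$-dependence tracking: each integration by parts, interpolation and Young step must be carried out so that the $\e$-weights balance correctly and the dissipation $D_\e(t)$ emerges as the right-hand side; additionally, the justification of the integration by parts in $a$ and of the weak time-differentiated equation requires careful use of the kernel decay and of Proposition~\ref{thmregtime}.
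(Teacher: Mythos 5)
Your proof is correct and follows essentially the same strategy as the paper: for (i) you combine the weak formulation tested with $\partial_t z_\e$ with an integration by parts in $a$ for the memory term (which the paper packages as Lemma~\ref{lem.dt.z.unif} via the change of variable $b=t-\e a$); for (ii) you test the time-differentiated weak equation (Lemma~\ref{lem.dt.z.unif2}) with $\partial_t z_\e$, absorb the $F''$ contribution by interpolation for $\e$ small, control the dissipative forcing by Cauchy--Schwarz against $D_\e$, and finish with the $\e$-weighted Sobolev/Agmon inequalities. The only cosmetic differences are in how the $a$-integration by parts and the $L^\infty$ step are phrased; the $\e$-bookkeeping and the key identities are the same.
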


 In the next proposition we make the following assumption (which is a consequence of the second part of \eqref{hyp10}):
\be{condrhoa}
\int_0^\infty a\|\rho(\cdot,a)\|_2 da <\infty.
\ee

\begin{prop}\label{thm.dt.z.unif}
	Assume \eqref{hyppbm1loc},  \eqref{eq.kernel0}, \eqref{hyppbmloc10}, \eqref{hyppbm1locz0}, \eqref{condrhoa}, let 
	$$R:=1+\|z_p\|_{X_{0}},\quad \hat R:=\|z_p\|_{W^{1,\infty}(-\infty,0;L^\infty(I))},
		\quad \bar R:=R^4+\hat R^2$$
	and  let $\e\in(0,\e_1)$ where $\e$ is given by Theorem~\ref{thmloc}.
		Then
	\be{Energy-init2}
	E_\e(t)\le C\bar R,\quad 0<t<T_\e^*,
	\ee
	and
	\be{globalboundz1}
	\|z_\e'(\cdot,t)\|_{L^\infty(I)}\le C\bar R^{1/3},\quad \|z_\e''(\cdot,t)\|_{L^2(I)}\le C\bar R^{1/2},\quad 0<t<T_\e^*.
	\ee
\end{prop}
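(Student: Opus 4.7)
The plan is to combine the energy dissipation from Proposition~\ref{thm.nrj}(i) with a direct variational bound on the initial energy at $t=0^+$. By Proposition~\ref{thmregtime}, $z_\e$ extends continuously in $H^2(I)$ up to $t=0$, so $z_\e(0^+)$ exists in $H^2(I)$, and a dominated-convergence argument (using \eqref{hyppbm1loc}, \eqref{hyppbm1locz0}) yields $E_\e(0^+) := \lim_{t\to 0^+}E_\e(t) = \cE_0(z_\e(0^+))$, where
\[
\cE_0(w) := \frac{1}{2\e}\int_I\int_0^\infty (w(s) - z_p(s,-\e a))^2 \rho(s,a) \,da\,ds + \tfrac12\|w''\|_2^2 + \int_I F(w')\,ds.
\]
By monotonicity, $E_\e(t)\le \cE_0(z_\e(0^+))$ on $(0,T_\e^*)$, so it suffices to show $\cE_0(z_\e(0^+)) \le C\bar R$. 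Passing to the limit $t\to 0^+$ in \eqref{mainweak2} moreover shows that $z_\e(0^+)$ solves the Euler-Lagrange equation
\[
\int_I \cL_\e[z_\e](0^+)v\,ds + \int_I z_\e''(0^+)v''\,ds + \int_I F'(z_\e'(0^+))v'\,ds = 0,\quad\forall v\in H^2(I).
\]

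Setting $p_0 := z_p(\cdot,0) \in H^2(I)$, I would test this equation with $v = z_\e(0^+) - p_0$. Applying the identity $a(a-b) = \tfrac12(a^2 - b^2 + (a-b)^2)$ to the nonlocal and bending contributions (with $a = z_\e(0^+) - z_p(-\e a)$, $b = p_0 - z_p(-\e a)$ for the former), together with the Bregman decomposition
\[
F'(\xi)(\xi - \eta) = F(\xi) - F(\eta) + D_F(\eta,\xi),\qquad D_F(\eta,\xi) := F(\eta) - F(\xi) - F'(\xi)(\eta - \xi),
\]
for the nonlinear term, one arrives at the variational identity
\[
\cE_0(z_\e(0^+)) + \frac{1}{2\e}\int_I\mu(s)(z_\e(0^+)-p_0)^2\,ds + \tfrac12\|z_\e''(0^+)-p_0''\|_2^2 + \int_I D_F(p_0',z_\e'(0^+))\,ds = \cE_0(p_0).
\]

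The right-hand side is estimated directly from the past data. Using both $|z_p(0)-z_p(-\e a)|\le \hat R\e a$ and $|z_p(0)-z_p(-\e a)|\le 2\|z_p\|_{L^\infty H^2}\le CR$ gives the mixed bound $(p_0 - z_p(-\e a))^2 \le CR\hat R\e a$; combined with \eqref{condrhoa} via Cauchy-Schwarz, this bounds the nonlocal part of $\cE_0(p_0)$ by $CR\hat R$, while the bending and potential terms give $\tfrac12\|p_0''\|_2^2 + \int F(p_0') \le CR^2 + CR^4$; altogether $\cE_0(p_0) \le C\bar R$. The main obstacle is that $F''(\xi) = 12\xi^2 - 4$ is sign-changing, so the Bregman term is indefinite and one must control it as a perturbation. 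The elementary estimate $|D_F(\eta,\xi)| \le C(1+\xi^2+\eta^2)(\eta-\xi)^2$, combined with the a priori bound $\|z_\e(0^+)\|_{H^2} \le R_0 \le CR$ from Theorem~\ref{thmloc}, yields
\[
\Big|\int_I D_F(p_0',z_\e'(0^+))\,ds\Big|\le CR^2\|z_\e'(0^+)-p_0'\|_2^2.
\]
Applying the interpolation \eqref{interpol-eta} to $z_\e'(0^+)-p_0'$ with parameter $\eta \sim R^{-2}$ splits the right-hand side into at most $\tfrac14\|z_\e''(0^+)-p_0''\|_2^2 + CR^4\|z_\e(0^+)-p_0\|_2^2$. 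Provided the constant $c_2$ in Theorem~\ref{thmloc} is chosen small enough (depending only on $\rho,L$) so that $\e\le\e_1$ forces $\e\le \mu_{\min}/(4CR^4)$, both error terms are absorbed into the positive quadratic deviations on the left of the identity. This yields $E_\e(0^+)\le \cE_0(p_0)\le C\bar R$, and hence \eqref{Energy-init2}.

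Finally, \eqref{globalboundz1} follows from \eqref{Energy-init2} by interpolation: the bound $\|z_\e''(t)\|_2^2 \le 2E_\e(t)\le C\bar R$ is immediate, while $F(\xi)\ge\tfrac12\xi^4-1$ yields $\|z_\e'(t)\|_4^4 \le 2E_\e(t) + 2L \le C\bar R$. The one-dimensional Gagliardo-Nirenberg inequality $\|u\|_\infty \le C\|u'\|_2^{1/3}\|u\|_4^{2/3} + C\|u\|_4$, applied to $u = z_\e'(t)$, then gives
\[
\|z_\e'(t)\|_\infty \le C(\bar R^{1/2})^{1/3}(\bar R^{1/4})^{2/3} + C\bar R^{1/4} \le C\bar R^{1/3},
\]
which completes the proof. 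The scaling $\bar R = R^4 + \hat R^2$ is precisely what is dictated by the interplay of the quartic nonlinearity $\int F(z_\e')\sim R^4$ with the quadratic-in-time deviations that arise from the Bregman correction.
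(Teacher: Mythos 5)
Your proof is correct, and the key step is a genuinely different route from the paper's. The agreements: both invoke the monotonicity $E_\e'\le 0$ from Proposition~\ref{thm.nrj}, both identify $\lim_{t\to 0^+}E_\e(t)=\cE_0(z_\e(0^+))$ by dominated convergence, and both then reduce to a bound of the initial energy. The difference is in how the initial energy is bounded. The paper first proves Proposition~\ref{lem.discont}, i.e.~the jump bound $\|z_\e(0^+)-z_p(0)\|_2\le C(R+\hat R)\e^{1/2}$, whose proof relies on the $H^4$ resolvent estimates \eqref{res1.bis}, \eqref{res2.bis}; it then splits $z_\e(0^+)-z_p(-\e a)$ through $z_p(0)$ and integrates against $\rho$. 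You instead test the Euler--Lagrange identity satisfied by $z_\e(0^+)$ with $v=z_\e(0^+)-z_p(0)$ and exploit the square-completion/Bregman decomposition to get the exact energy comparison
$$
\cE_0(z_\e(0^+)) + \frac{1}{2\e}\int_I\mu\,(z_\e(0^+)-p_0)^2 + \tfrac12\|z_\e''(0^+)-p_0''\|_2^2 + \int_I D_F(p_0',z_\e'(0^+)) = \cE_0(p_0),
$$
and you then absorb the indefinite Bregman term using the $H^2$ bound $\|z_\e(0^+)\|_{H^2}\le CR$ from Theorem~\ref{thmloc}, the interpolation \eqref{interpol-eta}, and the fact that $\e\le\e_1\lesssim R^{-8}$ makes the $\e^{-1}$-weighted $L^2$ deviation dominant. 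This bypasses the $H^4$ resolvent machinery entirely. Two small remarks: the correct interpolation scale is $\eta\sim R^{-1}$ (not $R^{-2}$), though the conclusion is unchanged; and your identity yields as a byproduct $\|z_\e(0^+)-p_0\|_2\le C(\e\bar R)^{1/2}$, which is slightly weaker than \eqref{zeps-discont}'s $C(R+\hat R)\e^{1/2}$ — but since the paper only uses the jump bound elsewhere (in Lemma~\ref{lemunifbounds}) through the crude consequence $C\bar R^{1/2}\e^{1/2}$, your version would in fact suffice as a drop-in replacement there too. For \eqref{globalboundz1}, your Gagliardo--Nirenberg route and the paper's $\|v\|_\infty^3\le C(1+\|v\|_4^4+\|v'\|_2^2)$ are essentially the same computation.
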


	In the process of proving Proposition~\ref{thm.dt.z.unif}, we obtain an estimate for the discontinuity of $z_\e$ at $t=0$,
	of independent interest.
	
	\begin{prop}\label{lem.discont}
		Under the assumptions of Proposition~\ref{thm.dt.z.unif}, there exists
		\be{zeps-init}
		z_\e(0^+):= \lim_{t\to 0^+} z_\e(t) \quad\hbox{ in the {strong} $H^2(I)$ sense}
		\ee
		and we have
		\be{zeps-discont}
		\|z_\e(0^+)-z_p(0)\|_2\le C(R+\hat R)\e^{1/2}.
		\ee
	\end{prop}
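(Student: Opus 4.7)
The plan is to use the integral form \eqref{pbmloc2} at $t=0^+$ and decompose the resulting error into three pieces, each of size $O(\e^{1/2})$ in $L^2(I)$. Since Proposition~\ref{thmregtime} yields $z_\e\in W^{1,\infty}_{loc}([0,T^*_\e);H^2(I))$, the solution is Lipschitz from $[0,T]$ into $H^2(I)$ for any $T<T^*_\e$, so the strong $H^2$-limit $z_\e(0^+)$ exists, giving \eqref{zeps-init}. After passing to the continuous representative, \eqref{pbmloc2} holds for every $t\in(0,T)$; letting $t\to 0^+$ (using $z_\e(t)\to z_\e(0^+)$ in $H^2\hookrightarrow C^1(\bar I)$, dominated convergence in the delay integral, and continuity of $A_{1,\e},A_{2,\e}$ on $L^2$), one obtains
\begin{equation*}
z_\e(0^+)= A_{1,\e} F'(z_\e'(0^+))+A_{2,\e}\Bigl(\int_0^\infty z_p(-\e a)\rho(\cdot,a) da\Bigr).
\end{equation*}

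I would next split $\int_0^\infty z_p(-\e a)\rho(\cdot,a) da=\mu\,z_p(0)+g_\e$, where $g_\e(s):=\int_0^\infty [z_p(-\e a)-z_p(0)]\rho(s,a)da$, so that, by linearity of $A_{2,\e}$,
\begin{equation*}
z_\e(0^+)-z_p(0) = A_{1,\e} F'(z_\e'(0^+))+\bigl[A_{2,\e}(\mu z_p(0))-z_p(0)\bigr]+A_{2,\e} g_\e,
\end{equation*}
and bound the three summands separately. For the first, Theorem~\ref{thmloc}(i) gives $\|z_\e(0^+)\|_{H^2}\le CR$, hence $\|F'(z_\e'(0^+))\|_2\le CR^3$, and \eqref{EstimH2} with $j=0$, $g=0$ yields $\|A_{1,\e} F'(z_\e'(0^+))\|_2\le CR^3\e^{3/4}$; the smallness $\e\le\e_1=c_2R_0^{-8}$ then implies $\e^{1/4}\le CR^{-2}$, so this term is $\le CR\e^{1/2}$. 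For the third, the Lipschitz bound $|z_p(-\e a)-z_p(0)|\le\hat R\e a$ combined with \eqref{condrhoa} gives $\|g_\e\|_2\le C\hat R\e$, and \eqref{EstimH2} then yields $\|A_{2,\e}g_\e\|_2\le K\|g_\e\|_2\le C\hat R\e^{1/2}$.

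The main step is the second term $w:=A_{2,\e}(\mu z_p(0))-z_p(0)$, which measures the elliptic regularisation error introduced by passing through the resolvent. Writing the weak formulation \eqref{linearpbmweak} for $u:=A_{2,\e}(\mu z_p(0))$ with $k=\mu$, $f=0$, $g=\mu z_p(0)$, and subtracting the trivial identity $\int_I\mu z_p(0)v=\int_I\mu z_p(0)v$, one gets
\begin{equation*}
\int_I \mu w v + \e\int_I w'' v'' = -\e\int_I z_p''(0)v'',\qquad\forall v\in H^2(I).
\end{equation*}
Testing with $v=w$ and using $2ab\le a^2+b^2$ yields $\mu_{\min}\|w\|_2^2+\tfrac{\e}{2}\|w''\|_2^2 \le \tfrac{\e}{2}\|z_p''(0)\|_2^2 \le CR^2\e$, hence $\|w\|_2\le CR\e^{1/2}$. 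Summing the three bounds (and using $R\ge 1$) delivers \eqref{zeps-discont}. The main difficulty lies in identifying the right three-piece decomposition so that the resolvent's elliptic regularisation error (not the nonlinearity) is the term dictating the $\e^{1/2}$ rate, and in invoking the smallness condition $\e\le\e_1$ to absorb the cubic growth of $F'(z')$ into a factor linear in $R$.
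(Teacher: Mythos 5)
Your proof is correct and uses all the right ingredients, but it organizes the estimate differently from the paper, and the difference is worth noting.

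The paper's proof works from the strong form of the equation, writing
\begin{equation*}
\mu(s)\bigl(z(s,t)-z_p(s,0)\bigr)=\e\bigl(-z''''+(F'(z'))'\bigr)(s,t)+\int_0^\infty\bigl(z(s,t-\e a)-z_p(s,0)\bigr)\rho(s,a)\,da,
\end{equation*}
and bounds the first term by the uniform $H^4$-estimate $\|z(t)\|_{H^4}\le CR\e^{-1/2}$ (itself obtained by applying \eqref{res1.bis} with $j=4$ and \eqref{res2.bis} with $j=2$ to the fixed-point representation \eqref{pbmloc2}), so that $\e\|-z''''+(F'(z'))'\|_2\le CR\e^{1/2}$; the integral term is then handled by dominated convergence as $t\to 0^+$, using the Lipschitz bound on $z_p$. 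Your proof instead passes to the $t\to 0^+$ limit in the fixed-point identity and decomposes $z_\e(0^+)-z_p(0)$ into three pieces: the nonlinearity through $A_{1,\e}$, the elliptic regularization error $w=A_{2,\e}(\mu z_p(0))-z_p(0)$, and the past-data shift $A_{2,\e}g_\e$. The new ingredient is the clean variational bound $\mu_{\min}\|w\|_2^2+\tfrac{\e}{2}\|w''\|_2^2\le\tfrac{\e}{2}\|z_p''(0)\|_2^2$, which isolates the failure of the resolvent $(\mu+\e\partial_s^4)^{-1}\mu$ to act as the identity on $H^2$ data as the source of the $\e^{1/2}$ rate, rather than extracting it from an $H^4$ regularity estimate on the solution. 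The two routes encode the same $\e^{1/2}$ smoothing of the resolvent, but yours makes the mechanism behind the jump estimate more transparent and modular, while the paper's re-uses the global $H^4$ bound it needs anyway; both are equally rigorous. (One cosmetic point: your third-term bound $\|A_{2,\e}g_\e\|_2\le C\hat R\e$ is actually stronger than the $\e^{1/2}$ you state, which is fine.)
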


	In view of the proofs of these three propositions, we prepare the following two lemmas.

\begin{lem}\label{lem.dt.z.unif}
	Let $\ell$ be a positive integer, 
	$v\in H^2(I)$ and set
	\be{Gelldef}
	G(t) := \int_I \intrp u^\ell_\e(s,t,a)  \rho(s,a) v(s)da ds.
	\ee
	We have $G\in W^{1,\infty}_{loc}([0,T_\e^*))$ and, for a.e.~$t\in(0,T_\e^*)$,
	\be{Gellprime}
	G'(t)=\ell\ds\int_I\int_0^\infty \partial_tz_\e(s,t) u^{\ell-1}_\e(s,t,a)\rho(s,a)v(s) da ds
	+ \frac1\e\ds\int_I\int_0^\infty u^\ell_\e(s,t,a)\partial_a\rho(s,a)v(s) da ds.
	\ee
\end{lem}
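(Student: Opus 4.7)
The plan is to combine the pointwise chain-rule identity
\[
\partial_t(u_\e^\ell) + \frac{1}{\e}\partial_a(u_\e^\ell) = \ell\, u_\e^{\ell-1}\partial_t z_\e(s,t)
\]
with an integration by parts in $a$ against $\rho$. To derive this identity, I would extend the data by setting $\tilde z_\e(\cdot,\tau) := z_\e(\cdot,\tau)$ for $\tau \ge 0$ and $\tilde z_\e(\cdot,\tau):=z_p(\cdot,\tau)$ for $\tau<0$, so that $u_\e(s,t,a) = z_\e(s,t) - \tilde z_\e(s,t-\e a)$. Then $(1/\e)\partial_a u_\e = \partial_t\tilde z_\e(s,t-\e a)$ and $\partial_t u_\e = \partial_t z_\e(s,t) - \partial_t\tilde z_\e(s,t-\e a)$ almost everywhere; adding the two and multiplying by $\ell u_\e^{\ell-1}$ yields the claimed identity.

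To justify differentiation under the integral, I would use the time regularity from Proposition~\ref{thmregtime}, which gives $z_\e \in W^{1,\infty}_{loc}([0,T_\e^*);H^2(I))$. Combined with \eqref{hyppbm1locz0}, this makes $\tilde z_\e$ Lipschitz in $t$ with values in $H^2(I)\hookrightarrow L^\infty(I)$ on any $[0,T]\subset[0,T_\e^*)$. In particular $u_\e^\ell$ is Lipschitz in $t$ with values in $L^\infty(I)$ uniformly in $a\in(0,\infty)$, so that $\partial_t(u_\e^\ell)$ exists for a.e.~$t$ and is pointwise bounded by a constant $C_T$. Since $\rho \in L^1_a(0,\infty;L^\infty(I))$ and $v\in L^2(I)\subset L^1(I)$, dominated convergence allows one to swap $\partial_t$ with the double integral, giving $G'(t) = \int_I v(s)\int_0^\infty \partial_t(u_\e^\ell)\,\rho\, da\, ds$ for a.e.~$t$.

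Inserting the chain-rule identity into this expression produces the first term of \eqref{Gellprime} plus a remainder $-\e^{-1}\int_I v(s)\int_0^\infty \partial_a(u_\e^\ell)\,\rho\,da\,ds$. Integrating by parts in $a$, the boundary contribution at $a=0$ vanishes because $u_\e(s,t,0)=0$ and $\ell\ge 1$, while the contribution at $a=\infty$ vanishes since $\rho(s,a)\to 0$ (a consequence of $\partial_a\rho\le 0$ together with $\rho\in L^1_a$) combined with the uniform boundedness of $u_\e$; this leaves exactly the second term of \eqref{Gellprime}. The local $W^{1,\infty}$ regularity of $G$ then follows because both right-hand-side terms are locally bounded on $[0,T_\e^*)$, using $\partial_t z_\e \in L^\infty_{loc}(H^2)$, $\rho\in L^1_a(L^\infty)$, and $|\partial_a\rho|\in L^1_a(L^\infty)$, the latter being a direct consequence of \eqref{eq.kernel0}.

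The main obstacle lies in the careful treatment of two points: first, $\partial_t z_\e$ exists only almost everywhere and $\tilde z_\e$ has a potential jump at $\tau=0$, so dominated convergence must replace a naive differentiation under the integral sign; second, the vanishing of the boundary term at $a=\infty$ in the integration by parts requires the decay $\rho(s,a)\to 0$, supplied by the monotonicity and integrability embedded in \eqref{eq.kernel0}. Both are comfortably handled by the standing hypotheses.
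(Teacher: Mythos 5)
Your chain-rule identity $\partial_t(u_\e^\ell) + \e^{-1}\partial_a(u_\e^\ell) = \ell u_\e^{\ell-1}\partial_t z_\e(s,t)$ is correct pointwise away from the set $\{a=t/\e\}$, and your identification of the boundary terms at $a=0$ and $a=\infty$ is sound. However, there is a genuine gap in the differentiation-under-the-integral step, and it is precisely the point you flagged — but your proposed cure does not cure it.

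The extended function $\tilde z_\e$ is Lipschitz from $[0,T]$ to $H^2$ and from $(-\infty,0]$ to $H^2$, but it is \emph{not} Lipschitz across $\tau=0$: as Remark~\ref{rem-mainth}(iii) and Proposition~\ref{lem.discont} emphasize, $z_\e$ generally has a jump at $t=0^+$. Consequently, for fixed $a$, the map $t\mapsto u_\e^\ell(s,t,a)$ has a jump at $t=\e a$, and the claim that ``$u_\e^\ell$ is Lipschitz in $t$ with values in $L^\infty(I)$ uniformly in $a\in(0,\infty)$'' is false. Dominated convergence does not save this: for $a$ in the interval of length $h/\e$ between $t/\e$ and $(t+h)/\e$, the difference quotient $h^{-1}(u_\e^\ell(s,t+h,a)-u_\e^\ell(s,t,a))$ is of size $O(1/h)$, so the difference quotients are not dominated. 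The correct Leibniz-type formula produces, beyond $\int_0^\infty\partial_t(u_\e^\ell)\rho\,da$, an extra term
\[
\frac{1}{\e}\Bigl[\bigl(z_\e(s,t)-z_\e(s,0^+)\bigr)^\ell-\bigl(z_\e(s,t)-z_p(s,0)\bigr)^\ell\Bigr]\rho\bigl(s,t/\e\bigr),
\]
coming from the moving boundary at $a=t/\e$. Symmetrically, your integration by parts in $a$ must be done on $(0,t/\e)$ and $(t/\e,\infty)$ separately, and it produces an internal jump contribution at $a=t/\e$ of the \emph{same} magnitude with the opposite sign. These two contributions cancel, so \eqref{Gellprime} is indeed correct; but your write-up omits both, treats the jump as absorbed by ``dominated convergence,'' and thus proves the right formula by two compensating oversights rather than by a valid argument.

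The paper avoids this entirely by changing variables to $b=t-\e a$, writing $G(t)=\e^{-1}\int_I\int_{-\infty}^t\tilde u^\ell(s,t,b)\rho(s,(t-b)/\e)v\,db\,ds$ with $\tilde u(s,t,b)=z(s,t)-z(s,b)$. In the $b$-variable the discontinuity of $z(s,\cdot)$ sits at the \emph{fixed} point $b=0$ and no longer moves with $t$; the only $t$-dependence left in $\tilde u$ is through $z(s,t)$ (Lipschitz for $t>0$), and the upper-limit boundary term vanishes because $\tilde u(\cdot,t,t)=0$. This reparametrization is the step you are missing. If you want to keep the $a$-variable, you would need to split the $a$-integral at $a=t/\e$, carry both the moving-boundary term and the internal-jump term from the integration by parts, and show explicitly that they cancel.
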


\begin{lem}\label{lem.dt.z.unif2}
	For a.e.~$t\in(0,T_\e^*)$ and all $v\in H^2(I)$, the time derivative $\dt z$ satisfies
	\be{eq.dt.z}
	\int_I \mu(s) \dt z_\e(s,t) v(s)ds + \e \int_I \bigl( \dt z''_\e v'' +F''(z'_\e) \dt z'_\e v'\bigr)(s,t)ds
	+\frac1\e\int_I \intrp  u_\e(s,t,a) v(s) \partial_a \rho(s,a) da ds =0. 
	\ee	
\end{lem}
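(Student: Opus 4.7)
The plan is to obtain \eqref{eq.dt.z} by differentiating the weak formulation \eqref{mainweak2} in time and then combining with Lemma~\ref{lem.dt.z.unif}. Set
$$\Phi(t) := \frac{1}{\e}\int_I \int_0^\infty u_\e(s,t,a)\rho(s,a) v(s)\, da\, ds + \int_I z''_\e(t) v''\, ds + \int_I F'(z'_\e(t)) v'\, ds.$$
By \eqref{mainweak2}, $\Phi(t) = 0$ for a.e.~$t \in (0,T^*_\e)$. The strategy is to show that $\Phi$ admits a Lipschitz representative on every compact subinterval of $[0,T^*_\e)$, so that $\Phi \equiv 0$ pointwise and hence $\Phi'(t) = 0$ for a.e.~$t$, then to evaluate $\Phi'$ term by term.

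For the Lipschitz regularity of $\Phi$, the first (delay) term is handled directly by Lemma~\ref{lem.dt.z.unif} with $\ell = 1$ and $G(t) = \e\cdot(\hbox{first term})$. For the second and third terms, I would invoke Proposition~\ref{thmregtime}, which gives $z_\e \in W^{1,\infty}_{loc}([0,T^*_\e); H^2(I))$; in particular $t \mapsto z''_\e(t) \in L^2(I)$ is Lipschitz, so $t \mapsto \int_I z''_\e(t) v''\, ds$ is Lipschitz for each fixed $v \in H^2(I)$. Similarly, since $z'_\e \in W^{1,\infty}_{loc}([0,T^*_\e); H^1(I))$ with $H^1(I) \hookrightarrow L^\infty(I)$, and $F' \in C^\infty(\R)$ is locally Lipschitz, the composition $t \mapsto F'(z'_\e(t))$ inherits Lipschitz continuity into $L^\infty(I)$, making the last term Lipschitz in $t$.

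To compute $\Phi'(t)$ at a point of differentiability, Lemma~\ref{lem.dt.z.unif} with $\ell = 1$ yields
$$\frac{d}{dt}\!\left[\frac{1}{\e}\int_I \int_0^\infty u_\e \rho\, v\, da\, ds\right] = \frac{1}{\e}\!\int_I \mu(s)\partial_t z_\e v\, ds + \frac{1}{\e^2}\!\int_I\int_0^\infty u_\e\, \partial_a\rho\, v\, da\, ds,$$
after observing $\int_0^\infty \rho(s,a)\, da = \mu(s)$. The second term of $\Phi$ differentiates directly to $\int_I \partial_t z''_\e\, v''\, ds$. For the third term, the chain rule (valid pointwise in $s$ and then integrated, using $F'' \in L^\infty$ on bounded sets together with $\|z'_\e(t)\|_\infty$ bounded locally in $t$) gives $\int_I F''(z'_\e)\partial_t z'_\e\, v'\, ds$. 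Setting $\Phi'(t) = 0$ and multiplying through by $\e$ yields exactly \eqref{eq.dt.z}.

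The main obstacle is rigorously justifying the chain rule for $t \mapsto F'(z'_\e(t))$ in a way compatible with the integration against $v' \in L^2(I)$. I would argue as follows: for a.e.~$t$, writing $F'(z'_\e(t+h)) - F'(z'_\e(t)) = \bigl(\int_0^1 F''(\sigma z'_\e(t+h) + (1-\sigma)z'_\e(t))\, d\sigma\bigr)\,(z'_\e(t+h) - z'_\e(t))$ and using the uniform $L^\infty$ bound on $z'_\e$ on $[0,T]$ together with the $L^2$-Lipschitz regularity of $z'_\e$ from Proposition~\ref{thmregtime}, the difference quotients converge in $L^2(I)$ to $F''(z'_\e(t))\partial_t z'_\e(t)$ at every Lebesgue point of $\partial_t z'_\e$ in $L^2(I)$, which covers a.e.~$t$. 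This justifies all differentiations under the integral sign and completes the proof.
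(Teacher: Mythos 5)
Your proof is correct and follows essentially the same route as the paper's: differentiate the weak formulation \eqref{mainweak2} in time, applying Lemma~\ref{lem.dt.z.unif} with $\ell=1$ to the delay term and the $W^{1,\infty}_{loc}([0,T^*);H^2(I))$ regularity from Proposition~\ref{thmregtime} to the bending and potential terms. The only difference is that you spell out the Lipschitz‐continuity and chain‐rule justifications that the paper's proof leaves implicit.
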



	\begin{rem}
		Setting $w(s):=\dt z(s,t)$ and assuming that the quantity $\Xi := \frac1\e\intrp  u_\e(s,t,a) \partial_a \rho(s,a) da $ is known,  Lemma~\ref{lem.dt.z.unif2} means that $w$  is a weak solution of
		the (closed) elliptic equation
		$$ \mu w+ \e  \bigl( w''''-(F''(z'_\e) w')'\bigr)
		=-\Xi ,\quad s\in I,$$
		along with the natural boundary conditions.
	\end{rem}

\begin{proof}[Proof of Lemma~\ref{lem.dt.z.unif}]
	Setting $\tilde u(s,t,b)=z(s,t)-z(s,b)$ and changing variables by $b=t-\e a$, we first rewrite
	$G(t)= \e^{-1}\int_I \int_{-\infty}^t  \tilde u^\ell(s,t,b)\rho(s,\ts\frac{t-b}{\e}) v(s)db ds$.
	It follows that,  for all $t,h$ with $t,t+h\in(0,T^*_\e)$,
	$$\begin{aligned}
		& \e(G(t+h)-G(t))=\ds\int_I\int_{-\infty}^t \bigl(\tilde u^\ell(s,t+h,b)-\tilde u^\ell(s,t,b)\bigr)\rho(s,\ts\frac{t+h-b}{\e})v db ds\\
		&+\ds\int_I\int_{-\infty}^t \tilde u^\ell(s,t,b)\bigl(\rho(s,\ts\frac{t+h-b}{\e})-\rho(s,\ts\frac{t-b}{\e})\bigr) v db ds 
		+ \ds\int_I\int_0^h \tilde u^\ell(s,t+h,t+\tau)\rho(s,\ts\frac{h-\tau}{\e}) v d\tau ds.
	\end{aligned}$$
	Using assumption \eqref{eq.kernel0} 
	and the fact that $z\in L^\infty((-\infty,T_0)\times I)\cap W^{1,\infty}(0,T_0;L^\infty(I))$
	for any $T_0<T^*$ (cf.~Proposition~\ref{thmregtime}), we deduce that $G\in W^{1,\infty}_{loc}([0,T^*))$ and,
	taking into account that $\tilde u(\cdot,t,t)=0$, 
	we obtain
	$$G'(t)
	= \e^{-1}\ds\int_I\int_{-\infty}^t \partial_t\tilde u^\ell(s,t,b)\rho(s,\ts\frac{t-b}{\e})v db ds
	+\e^{-2}\ds\int_I\int_{-\infty}^t \tilde u^\ell(s,t,b)\partial_a\rho(s,\ts\frac{t-b}{\e})v db ds,$$
	for a.e.~$t\in (0,T^*)$. The result follows by going back to the variable $a$.
\end{proof}

\begin{proof}[Proof of Lemma~\ref{lem.dt.z.unif2}]
	Fix $v\in H^2(I)$ and let $G$ be given by \eqref{Gelldef} with $\ell=1$.
	 Since $z\in W^{1,\infty}_{loc}([0,T^*);$ $H^2(I))$ by Proposition~\ref{thmregtime}, we may differentiate \eqref{mainweak} in time which, after using \eqref{Gellprime}, gives
	$$\begin{aligned}
		-\e & \ddt{} \int_I\ \bigl(z''v'' + F'(z') v'\bigr)ds =G'(t)=\int_I\intrp \dt z\rho  vdads 
		+\e^{-1}\int_I \intrp  u \partial_a \rho v da ds,
	\end{aligned}$$
	hence \eqref{eq.dt.z}.
\end{proof}

\begin{proof}[Proof of Proposition~\ref{thm.nrj}]
	
		(i) Set $E_1(t)= \frac12 \int_I |z''|^2 ds +  \int_I F(z')ds$. 
		Since $z\in W^{1,\infty}_{loc}([0,T^*);$ $H^2(I))$, we have
		$E_1\in W^{1,\infty}_{loc}([0,T^*))$ and,
		using $z_t(\cdot,t)$ as test-function in the weak formulation \eqref{mainweak}, we get, for a.e.~$t\in(0,T^*)$,
		\be{E1prime}
		E'_1(t)
		= \int_I z''\dt z'' ds + \int_i F'(z')\dt z'ds=  -\frac{1}{\e}\int_I\intrp u\dt z \rho dads.
		\ee
		Let now $G$ be given by \eqref{Gelldef} with $\ell=2$ and $v\equiv 1$.
		Since $E_\e(t)=E_1(t)+\frac{1}{2\e}G(t)$, we obtain
		\eqref{eq.dissipation0} by combining \eqref{Gellprime} and \eqref{E1prime}.
		\smallskip
		
	 (ii) Property \eqref{eq.dissipation} readily follows from \eqref{eq.dissipation0}.
	Set $$\cS(s,t) := \intrp u^2(s,t,a)|\partial_a\rho(s,a)|da.$$ Note that, by Cauchy-Schwarz and \eqref{eq.kernel0},
	$$
	\Bigl(\intrp u(s,t,a)|\partial_a\rho(s,a)| da\Bigr)^2\le \cS(s,t)\intrp |\partial_a\rho| da \le C\cS(s,t).$$
	For any $\lambda>0$, applying \eqref{eq.dt.z}  with $v=\dt z(\cdot,t)$ and using the fact that $F''\ge -4$, we deduce that,
	for  a.e.~$t\in(0,T^*)$,
	$$\begin{aligned}
		\int_I \bigl(\mu |\dt z|^2 + \e |\dt z''|^2 \bigr)ds
		&=-\e \int_I F''(z') |\dt z'|^2-\e^{-1}\int_I \dt z\left(\intrp  u  \partial_a \rho da\right) ds\\
		&\le 	4\e \int_I  |\dt z'|^2ds+\lambda\int_I |\dt z|^2ds+C\lambda^{-1}\e^{-2}\int_I \cS(s,t) ds.
	\end{aligned}$$
	Next using
	$\int_I  |\dt z'|^2 \le \frac{1}{8}\int_I  |\dt z''|^2+ C_2\int_I  |\dt z|^2$
	 with $C_2=C_2(L)>0$ (cf.~\eqref{interpol-eta}), it follows that
	$$\begin{aligned}
		(\mu_{min}-\lambda-4C_2\e)\int_I |\dt z|^2 ds + \frac{\e}{2} \int_I |\dt z''|^2 ds
		&\le 	C\lambda^{-1}\e^{-2}\int_I  \intrp u^2  |\partial_a\rho| da ds.
	\end{aligned}$$
	Up to replacing $C_0$ in 
	\eqref{mapping1}, \eqref{mapping2} by $\max(C_0,(C_2/\mu_{min})^{1/4})$, we may assume that
	$\e\le\e_1\le 4^{-8}C_0^{-4}\le\mu_{min}/(8C_2)$.
	Choosing $\lambda=\mu_{min}/4$, we get
	\be{choicelambda}
	A(t):=\int_I |\dt z|^2 ds +\e \int_I |\dt z''|^2 ds\le C\e^{-2}\int_I  \intrp u^2  |\partial_a\rho| da ds.
	\ee
	 Integrating this in time and using \eqref{eq.dissipation} yields \eqref{globalboundz0step}.
		
		Let us finally check \eqref{globalboundz1step}.
	By \eqref{interpol-eta} with $\eta=\e^{1/2}$, we have
	\be{dtzprimeL2}
	\|\partial_t z'\|^2_{L^2}\le \e^{1/2}\|\partial_t z''\|^2_{L^2}+C\e^{-1/2}\|\partial_t z\|^2_{L^2}
	\le C\e^{-1/2} A(t).
	\ee
	Using the Sobolev inequality
	\be{Sobolev-eta}
	\|v\|_{L^\infty}\le \eta\|v'\|_{L^2}+ C(1+\eta^{-1})\|v\|_{L^2}, \quad v\in H^1(I),
	\ee
	with $\eta=\e^{1/4}$, we deduce from \eqref{dtzprimeL2} that
	\be{dtzprimeLinfty}
	\|\partial_t z'\|^2_{L^\infty}
	\le  \eta\|\partial_t z''\|^2_{L^2}+C \eta^{-1}\e^{-1/2} A(t)
	\le C\e^{-3/4} A(t)
	\ee
	and
	$$\|\partial_t z\|^2_{L^\infty}\le \eta\|\partial_t z'\|^2_{L^2}+ C\eta^{-1}\|\partial_t z\|^2_{L^2}
	\le C \bigl( \eta\e^{-1/2} A(t)+ \eta^{-1}\|\partial_t z\|^2_{L^2}\bigr)
	\le C  \e^{-1/4} A(t),$$
	 and \eqref{globalboundz1step} follows from  \eqref{globalboundz0step}.
\end{proof}

We next prove Proposition~\ref{lem.discont} as a consequence of Proposition~\ref{thm.nrj}.

\begin{proof}[Proof of Proposition~\ref{lem.discont}]
	 Property \eqref{zeps-init} follows from $z\in W^{1,\infty}_{loc}([0,T^*);H^2(I))$ 
		(cf.~Proposition~\ref{thmregtime}).
	
	\smallskip
	
	We next prove \eqref{zeps-discont}.
	By Theorem~\ref{thmloc}(i), since $0<\e\le \e_1=c_2R^{-8}$,
	we have
	$T^*\ge T_\e:= c_3\e$
	and
	\be{H2R}
	\|z_\e(t)\|_{H^2}\le  CR,\quad -\infty<t\le T_\e.
	\ee
	Let now $t\in(0,T_\e)$. For all $s\in I$, we write
	\be{H2R1}
	\mu(s)(z(s,t)-z_p(s,0))=\e \bigl(-z''''+\partial_s(F'(z'))\bigr)+\int_0^\infty (z(s,t-\e a)-z_p(s,0))\rho(s,a) da.
	\ee
	To estimate the first term on the right hand side, we use \eqref{pbmloc}, \eqref{res1.bis} with $j=4$, \eqref{res2.bis} with $j=2$, 
	the fact that $\|fg\|_{H^2}\le C\|f\|_{H^2}\|g\|_{H^2}$, and \eqref{H2R} to get
	$$\begin{aligned}
		\|z(t)\|_{H^4} 
		&\le \|A_{1,\e}F'(z'(t))\|_{H^4}+\Big\|A_{2,\e}\int_0^\infty z(t-\e a)\rho(a) da \Bigr\|_{H^4}\\
		&\le C\e^{-1/4} \|F'(z'(t))\|_{L^2}  +C\|F''(z'(t))z''(t)\|_{L^2} +C\e^{-1/2}\Big\|\int_0^\infty z(t-\e a)\rho(a) da \Bigr\|_{H^2}\\
		&\le C(1+\|z'(t)\|^2_\infty)(\e^{-1/4}  \|z'(t)\|_\infty +\|z''(t)\|_2)
		+CM_1\e^{-1/2}\sup_{\sigma<t}\|z(\sigma)\|_{H^2}\\
		&\le CR^3\e^{-1/4}+CM_1R\e^{-1/2}\le CR\e^{-1/2},
	\end{aligned}$$
	hence
	\be{H2R2}
	\bigl\|-z''''(t)+(F'(z'(t))'\bigr\|_2
	\le CR\e^{-1/2}+CR^3\le CR\e^{-1/2}.
	\ee
	On the other hand,  by dominated convergence, \eqref{hyppbm1locz0} and  \eqref{condrhoa}, we have
	$$\begin{aligned}
		\lim_{t\to 0^+}\int_0^\infty \|z(t-\e a)-z_p(0)\|_\infty\|\rho(\cdot,a)\|_2 da 
		&=\int_0^\infty \|z_p(-\e a)-z_p(0)\|_\infty \|\rho(\cdot,a)\|_2 da\le \hat R\e \int_0^\infty a\|\rho(\cdot,a)\|_2 da=C\hat R\e.
	\end{aligned}$$
	Combining this with \eqref{H2R1} and \eqref{H2R2}, it follows that
	$$\mu_{min}\limsup_{t\to 0^+}\|(z(t)-z_p(0)\|_2\le CR\e^{1/2}+C\hat R\e, 
	$$
	hence \eqref{zeps-discont}.
\end{proof}

We finally prove Proposition~\ref{thm.dt.z.unif} as a consequence of Proposition~\ref{lem.discont}.

\begin{proof}[Proof of Proposition~\ref{thm.dt.z.unif}]
	By dominated convergence, \eqref{hyppbm1locz0} and \eqref{zeps-init}, we have
	$$
	\lim_{t\to 0^+} \int_I \intrp u_\e^2(s,a,t)  \rho(s,a) da ds
	=\int_I \intrp |z(s,0^+)-z_p(s,-\e a)|^2  \rho(s,a) da ds.$$
	Moreover, for each $a>0$, we have
	$$\begin{aligned}
		\int_I & |z(s,0^+)-z_p(s,-\e a)|^2  ds \le 2\int_I |z(s,0^+)-z_p(s,0)|^2  ds+2\int_I |z_p(s,0)-z_p(s,-\e a)|^2ds \\
		&\le C(R+\hat R)^2
		\e+ C\hat R^2\min(1,\e^2a^2)  
		\le C(R+\hat R)^2\e(1+a).
	\end{aligned}$$
	Therefore
	$$
\begin{aligned}
		\lim_{t\to 0^+} \int_I \intrp u_\e^2(s,a,t)  \rho(s,a) da ds & 
	\le C(R+\hat R)^2\e
	\int_I \intrp (1+a)\rho(s,a) da ds \lesssim C(R+\hat R)^2\e.
\end{aligned}
	$$ 
	This, along with \eqref{H2R} and the definition of $E_\e$ guarantees that $\lim_{t\to 0^+} E_\e(t)\le C\bar R$
	and \eqref{Energy-init2} follows from the monotonicity \eqref{eq.dissipation0}.
	Property \eqref{globalboundz1} then follows from \eqref{Energy-init2} using the inequality 
	$\|v\|_\infty^3\le C(\|v^3\|_1+\|(v^3)'\|_1)\le C(1+\|v\|_4^4+\|v'\|_2^2)$.
\end{proof}

\section{Uniform $L^\infty$ bounds and global existence (proof of Theorem~\ref{thmglob}{\rm (i)})}  \label{SecUnif}

To prove global existence we cannot solely rely on 
the energy estimates from the previous section, since  they give 
bounds up to $T^*$ for $z'_\e$ (and $z''_\e$) but not for $z_\e$ itself.
The latter are provided by the following proposition,
which immediately implies Theorem~\ref{thmglob}(i).

\begin{prop} \label{thmunifbounds}
	Assume \eqref{hyppbm1loc}--\eqref{hyp10}, \eqref{hyppbm1locz0},
	and let $\e_1, \bar R$ be respectively given by Theorem~\ref{thmloc} and Proposition~\ref{thm.dt.z.unif}.
	There exists $c_0>0$ depending only on $\rho, L$ such that,
	for each $\e\in(0,\e_0]$ with $\e_0=c_0\bar R^{-4}\le\e_1$, we have
	\be{claimbound}
	\|z_\e(t)\|_{L^\infty(I)}\le C\bar R^{1/2},\quad -\infty< t<T^*_\e.
	\ee
	As a consequence, we have 
	$$T^*_\e=\infty,$$ 
	\be{claimboundH2}
	\|z_\e(t)\|_{H^2(I)}\le C\bar R^{1/2},\quad t\in\R
	\ee
	\be{globalboundz0stepGlobal}
	\int_0^\infty \bigl\{\|\partial_t z_\e(\tau)\|_{L^2(I)}^2 + \e \|\partial_t z_\e''(\tau)\|_{L^2(I)}^2\bigr\} d\tau \le
		C\bar R
	\ee
	and
	\be{globalboundz1stepGlobal}
	\int_0^\infty \big\{ \e^{1/4}\|\partial_t z_\e(\tau)\|_{L^\infty(I)}^2
		+\e^{3/4} \|\partial_t z_\e'(\tau)\|_{L^\infty(I)}^2\bigr\} d\tau \le C\bar R.
	\ee
	Furthermore, for each $\e\in(0,\e_0]$, we have
	\be{claimbound2}
	\sup_{t>0} \|z_\e(t)\|_{W^{4,\infty}(I)}<\infty. 
	\ee
\end{prop}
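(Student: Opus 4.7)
\medskip

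\noindent\textbf{Plan of proof.} The heart of the matter is \eqref{claimbound}, the uniform $L^\infty$ control on $z_\e$ itself, since Proposition~\ref{thm.dt.z.unif} only delivers $L^2$-based control on the \emph{derivatives} $z'_\e$, $z''_\e$. Once this is established, all the remaining statements will follow relatively quickly from the blow-up alternative of Theorem~\ref{thmloc}(iii), from Proposition~\ref{thm.nrj}(ii) passed to the limit $t\to\infty$, and from the resolvent estimates of Section~\ref{SecRes} applied to \eqref{pbmloc2}.

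The starting point is the nonlocal conserved quantity mentioned in the introduction. I define
$$\Phi(t):=\int_0^L\int_0^\infty z_\e(s,t-\e a)\,\rhobar(s,a)\,da\,ds,\qquad t\ge 0,$$
and using $\partial_a\rhobar=-\rho$, $\rhobar(s,0)=\mu(s)$, $\rhobar(s,\infty)=0$, together with an integration by parts in the $a$-variable, I would compute
$$\Phi'(t)=\int_0^L \cL_\e[z_\e](s,t)\,ds,$$
which vanishes by testing the weak formulation \eqref{mainweak} against $v\equiv 1$ (the bending and nonlinear contributions disappear thanks to the natural boundary conditions \eqref{mainstrong2}). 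Since $\Phi(0)=\kappa_\e$ by \eqref{defkappae}, this gives $\Phi(t)\equiv\kappa_\e$. Rewriting $z_\e(s,t-\e a)=z_\e(s,t)-u_\e(s,t,a)$ and using $\int_0^\infty\rhobar\,da=\rhostar$ yields the identity
$$\int_0^L z_\e(s,t)\,\rhostar(s)\,ds=\kappa_\e+\int_0^L\!\int_0^\infty u_\e(s,t,a)\,\rhobar(s,a)\,da\,ds. \qquad (*)$$

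The main obstacle is now to bound the $u_\e$-term in $(*)$: there is no direct $\rho$-weighted pairing with $\rhobar$, so the energy control $\int u_\e^2\rho\le C\e\bar R$ cannot be used via a Cauchy--Schwarz argument against $\rhobar$ without an extra coercivity hypothesis such as \eqref{hyprhoa}. My strategy is to bypass this by exploiting time regularity. Writing $u_\e(s,t,a)=\int_{t-\e a}^{t}\partial_\tau z_\e(s,\tau)\,d\tau$ (using \eqref{zLip0}) and applying Fubini, the double integral becomes $\int_0^L\int_{-\infty}^t\partial_\tau z_\e(s,\tau)\,\Psi(s,(t-\tau)/\e)\,d\tau\,ds$, where $\Psi(s,\sigma):=\int_\sigma^\infty\rhobar(s,a)\,da$ satisfies $\Psi(s,0)=\rhostar$. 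I split $\tau\in(0,t)$ versus $\tau<0$: for the present contribution, Cauchy--Schwarz in $\tau$ combined with \eqref{globalboundz0stepGlobal}-type control $\int_0^\infty\|\partial_\tau z_\e\|_2^2\le C\bar R$ and a change of variable $\sigma=(t-\tau)/\e$ extracts an $\e^{1/2}$ factor, yielding a contribution of order $C\bar R^{1/2}\e^{1/2}$; for the past contribution, $|\partial_\tau z_p|\le\hat R$ together with a similar change of variable and the moment bound following from \eqref{hyp10} give a contribution of order $C\hat R\,\e$. Combining, $|\int u_\e\rhobar\,da\,ds|\le C\bar R^{1/2}$ for $\e\le 1$, and since $|\kappa_\e|\le CR\le C\bar R^{1/2}$, the identity $(*)$ delivers $|\int z_\e(\cdot,t)\rhostar\,ds|\le C\bar R^{1/2}$. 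Since $\rhostar\ge c>0$ (from \eqref{hyppbmloc10} and the monotonicity \eqref{eq.kernel0}), this forces the existence of $s_0\in I$ with $|z_\e(s_0,t)|\le C\bar R^{1/2}$, and then $\|z_\e(\cdot,t)\|_\infty\le|z_\e(s_0,t)|+L\|z_\e'(\cdot,t)\|_\infty\le C\bar R^{1/2}$ by \eqref{globalboundz1}; the case $t<0$ reduces to $\|z_p\|_\infty\le R\le\bar R^{1/2}$. This proves \eqref{claimbound}.

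The remaining conclusions follow mechanically. Choosing $c_0\le c_2$ small enough that $C\bar R^{1/2}<c_5\e^{-1/8}$ whenever $\e\le c_0\bar R^{-4}$, the blow-up alternative \eqref{BUalt2b} cannot trigger, hence $T_\e^*=\infty$; then \eqref{claimboundH2} combines \eqref{claimbound} with \eqref{globalboundz1}, and \eqref{globalboundz0stepGlobal}--\eqref{globalboundz1stepGlobal} are obtained from \eqref{globalboundz0step}--\eqref{globalboundz1step} with $t_0\to 0^+$ (via \eqref{Energy-init2}) and $t\to\infty$. Finally, for the $W^{4,\infty}$ bound \eqref{claimbound2}, I would apply the resolvent representation \eqref{pbmloc2}: the estimates \eqref{res1.bis} with $(j,q)=(4,2)$ applied to $f=F'(z_\e')$ (whose $H^1$ norm is controlled by $\|z_\e\|_{H^2}$) together with \eqref{res2.bis} with $j=2$ applied to $g=\int z_\e(t-\e a)\rho\,da\in H^2$ yield a uniform-in-$t$ bound $\|z_\e(t)\|_{H^4}\le C(\e)$; Sobolev embedding gives $z_\e\in L^\infty(0,\infty;W^{3,\infty}(I))$, and a final bootstrap inverting the equation as $z_\e''''=(F'(z_\e'))'-\cL_\e[z_\e]$, with the RHS now in $L^\infty$ uniformly in $t$, upgrades this to $W^{4,\infty}$.
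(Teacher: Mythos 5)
Your overall strategy coincides with the paper's: you introduce the conserved quantity $\Phi(t)=\Theta_\e(t)\equiv\kappa_\e$, convert this to the identity $(*)$ relating $\int_I z_\e\rhostar\,ds$ to $\kappa_\e$ plus the error $\int u_\e\rhobar$, and then bound that error to extract a point $s_0$ where $|z_\e(s_0,t)|$ is controlled before invoking \eqref{globalboundz1}. That is precisely Lemma~\ref{lemunifbounds} in the paper. The closing arguments (blow-up alternative, passage to the limit $t_0\to 0^+$ in \eqref{globalboundz0step}--\eqref{globalboundz1step}, resolvent bootstrap for $W^{4,\infty}$) also match. However, your estimate of $\int u_\e\rhobar$ via Fubini and the fundamental theorem of calculus has two genuine gaps.

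First, the representation $u_\e(s,t,a)=\int_{t-\e a}^t\partial_\tau z_\e(s,\tau)\,d\tau$ is \emph{false} whenever $t-\e a<0<t$, because $z_\e$ has a jump discontinuity at $\tau=0$ (cf. Remark~\ref{rem-mainth}(iii)). The correct decomposition must include the term $\bigl(z_\e(0^+)-z_p(0)\bigr)$ weighted by $\Psi(s,t/\e)$; this jump is quantitatively controlled by Proposition~\ref{lem.discont}, which your argument never invokes. Without it, the splitting into $\tau\in(0,t)$ and $\tau<0$ that you perform silently discards a contribution of order $\e^{1/2}(R+\hat R)$, and the argument does not close. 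The paper avoids the issue by estimating $\|z(t)-z(t-\e\tau)\|_2$ directly (its \eqref{boundphipsi2}), with the jump estimate built in.

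Second, your treatment of the past contribution relies exclusively on the Lipschitz bound $|\partial_\tau z_p|\le\hat R$, which forces you to integrate $\Psi(s,\sigma)=\int_\sigma^\infty\rhobar\,da$ over all $\sigma$. But $\int_0^\infty\Psi\,d\sigma=\tfrac12\int_0^\infty a^2\rho\,da$, a \emph{second} moment of $\rho$, whereas \eqref{hyp10} only guarantees the $a^{3/2}$-moment (in the $L^2_s$ sense). So the quantity you want to be finite need not be. The paper circumvents this by combining the Lipschitz estimate near $\tau=0$ with the uniform $L^\infty$ bound on $z_p$ for large $|\tau|$, namely $\|z_p(0)-z_p(\tau)\|_2\le C\hat R\min(1,|\tau|)\le C\hat R|\tau|^{1/2}$, which produces only square-root growth in $\tau$ and is compatible with the available $\int(1+\tau^{1/2})\|\rhobar(\cdot,\tau)\|_2\,d\tau<\infty$ (see \eqref{boundphipsi00}--\eqref{boundphipsi001}). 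You should replace the Lipschitz-only estimate by this hybrid one, and more generally replace your Cauchy--Schwarz-in-$\tau$ step by the pointwise-in-$\tau$ bound $\|z(t)-z(t-\e\tau)\|_2\lesssim\bar R^{1/2}\e^{1/2}(1+\tau^{1/2})$ integrated against $\|\rhobar(\cdot,\tau)\|_2$: this is both simpler and matched exactly to the hypotheses.
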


The key to the uniform estimate \eqref{claimbound} of $z_\e$ is the existence of a conserved quantity, given by the following past time average of $z$:
\be{Sndef}
\Theta_\e(t)=\int_{\tau=0}^\infty \int_Iz_\e(s,t-\e \tau)\rhobar(s,\tau)ds d\tau,\quad
\hbox{ where }\rhobar(s,\tau)=\int_\tau^\infty \rho(s,a)da,
\ee
for $t\in (0,T^*)$.
The function $\Theta_\e$ will be used in connection with the scalar product $(z(t),\rhostar)$,
which turns out to correspond with $\Theta_\e(t)$ {\it without time shift}.
The following lemma provides the crucial properties of $\Theta_\e$.
 We recall that the constants $\kappa_\e$ are defined in \eqref{defkappae}.

\begin{lem}  \label{lemunifbounds}
	Let the assumptions of Proposition~\ref{thmunifbounds} be in force.
	\smallskip
	
	(i) The function $\Theta_\e(t)$ is constant, namely
	\be{contphi0}
	\Theta_\e(t)=\kappa_\e,\quad t\in (0,T_\e^*),
	\ee
		
	(ii) We have
	\be{boundphipsi}
	\sup_{t\in(0,T_\e^*)} \bigl|(z_\e(t),\rhostar)-\kappa_\e\bigr| \le C(\bar R\e)^{1/2}. 
	\ee
\end{lem}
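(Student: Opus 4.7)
For (i), the plan is to prove $\Theta_\e'(t) \equiv 0$. By the regularity $z_\e \in W^{1,\infty}_{loc}([0,T^*_\e); H^2(I))$ from Proposition~\ref{thmregtime}, I differentiate $\Theta_\e$ under the integral sign. Rewriting $\partial_t z_\e(s, t-\e\tau) = -\e^{-1}\partial_\tau[z_\e(s, t-\e\tau)]$ and integrating by parts in $\tau$, using $\partial_\tau\rhobar = -\rho$, $\rhobar(s,0) = \mu(s)$, and $\rhobar(s,\infty) = 0$ (the vanishing at $\tau = \infty$ being ensured by the uniform $L^\infty$ bound on $z_\e$ combined with $\rho\in L^1_a$), I expect
\begin{equation*}
\Theta_\e'(t) = \frac{1}{\e}\int_I \int_0^\infty \bigl[z_\e(s,t) - z_\e(s, t-\e\tau)\bigr]\rho(s,\tau)\,d\tau\, ds = \int_I \cL_\e[z_\e](s, t)\, ds.
\end{equation*}
The weak formulation \eqref{mainweak} with the constant test function $v \equiv 1$ annihilates the $z''$ and $F'(z')$ contributions and forces $\int_I \cL_\e[z_\e]\, ds = 0$, hence $\Theta_\e' \equiv 0$ a.e. Combined with $\Theta_\e(0^+) = \kappa_\e$, obtained by dominated convergence (for $\tau > 0$ the integrand converges pointwise to $z_p(s, -\e\tau)\rhobar(s,\tau)$ as $t \to 0^+$), this establishes \eqref{contphi0}.

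For (ii), I combine (i) with the identity $\rhostar(s) = \int_0^\infty \rhobar(s,\tau)\, d\tau$ to rewrite
\begin{equation*}
(z_\e(t), \rhostar) - \kappa_\e = (z_\e(t), \rhostar) - \Theta_\e(t) = \int_I \int_0^\infty u_\e(s,t,\tau)\rhobar(s,\tau)\, d\tau\, ds,
\end{equation*}
with $u_\e$ defined by \eqref{defusta}. I then apply Cauchy-Schwarz in $(s,\tau)$ weighted by $\rho$:
\begin{equation*}
|(z_\e(t), \rhostar) - \kappa_\e| \le \left(\int_I \int_0^\infty u_\e^2 \rho\, d\tau\, ds\right)^{1/2}\left(\int_I \int_0^\infty \rhobar^2/\rho\, d\tau\, ds\right)^{1/2}.
\end{equation*}
The first factor is $\le \sqrt{2\e E_\e(t)} \le C\sqrt{\e\bar R}$, by the definition \eqref{defEeps} and the uniform energy estimate $E_\e(t) \le C\bar R$ of \eqref{Energy-init2}, which yields the desired $\sqrt{\e}$ scaling.

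The main obstacle is to verify that the second factor is finite, i.e., that the purely $\rho$-dependent integral $\int_I \int_0^\infty \rhobar^2/\rho\, d\tau\, ds$ is controlled by the assumptions \eqref{hyppbm1loc}--\eqref{hyp10}, exploiting the monotonicity $\partial_a\rho \le 0$ together with the weighted moment bounds. Should this direct Cauchy-Schwarz prove too crude, I would instead integrate by parts in $\tau$ using the primitive $\Psi(s,\tau') := \int_{\tau'}^\infty \rhobar(s,\tau)\, d\tau$, change variable $r = t-\e\tau$, and split at $r = 0$: the $r \in (0,t)$ contribution is handled by Cauchy-Schwarz with the $L^2_{s,t}$ bound \eqref{globalboundz0step} on $\partial_t z_\e$; the past-data piece $r < 0$ uses $\|\partial_t z_p\|_\infty \le \hat R$ together with $\hat R\e \le \bar R^{1/2}\sqrt{\e}$; and the residual initial-jump term $\int_I J(s)\Psi(s, t/\e)\, ds$, where $J(s) = z_\e(s,0^+) - z_p(s,0)$, is controlled by Proposition~\ref{lem.discont}, which provides $\|J\|_2 \le C(R+\hat R)\sqrt{\e}$.
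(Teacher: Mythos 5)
Part (i) is essentially correct and in the same spirit as the paper: both reduce to the conservation property $\int_I \cL_\e[z_\e]\,ds=0$ obtained from \eqref{mainweak} with $v\equiv 1$. The paper computes $\Theta_\e(t')-\Theta_\e(t)$ by a change of variables and Fubini, while you differentiate $\Theta_\e$ directly. Your sketch glosses over one delicate point: $z_\e$ has a jump at $t=0$ (Proposition~\ref{lem.discont}), so for $t>0$ fixed, the map $\tau\mapsto z_\e(\cdot,t-\e\tau)$ is discontinuous at $\tau=t/\e$, and a rigorous integration by parts in $\tau$ must split at $\tau=t/\e$ and track the boundary terms together with the Leibniz terms. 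They do cancel, so your conclusion is sound, but this should be made explicit; the paper's non-differential argument avoids the issue entirely.

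Part (ii) has a genuine gap. Your primary weighted Cauchy--Schwarz estimate requires $\int_I\int_0^\infty \rhobar^2(s,\tau)\rho^{-1}(s,\tau)\,d\tau\,ds<\infty$, and this is \emph{not} implied by \eqref{hyppbm1loc}--\eqref{hyp10}: if $\rho(s,a)\sim (1+a)^{-m}$ with $\frac{5}{2}<m<3$ (a regime explicitly covered by the paper, see the remark after \eqref{hyprhoa}), then $\rhobar\sim(1+\tau)^{-(m-1)}$ and $\rhobar^2/\rho\sim(1+\tau)^{2-m}$, which is not integrable since $2-m>-1$. Your fallback has the same structural problem, just relocated: the past-data contribution $\int_I\int_{-\infty}^0 \partial_t z_p(s,r)\,\Psi(s,(t-r)/\e)\,dr\,ds$, after using $\|\partial_t z_p\|_\infty\le\hat R$ and changing variables, is controlled by $\hat R\,\e\int_0^\infty\|\Psi(\cdot,\tau)\|_1\,d\tau$, and by Fubini $\int_0^\infty\|\Psi(\cdot,\tau)\|_1\,d\tau=\frac12\int_I\int_0^\infty a^2\rho(s,a)\,da\,ds$, which again diverges for $m\in(\frac52,3)$ (the integrand scales like $a^{2-m}$). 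In both variants you are, in effect, paying for an $L^2_\tau$-type integrability on a kernel-derived quantity where the hypotheses only grant $L^1_\tau$ with the weight $1+\tau^{1/2}$.

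The paper avoids this by \emph{not} applying Cauchy--Schwarz in the $\tau$ direction. It writes $(z_\e(t),\rhostar)-\kappa_\e=\int_0^\infty\int_I\bigl(z_\e(t)-z_\e(t-\e\tau)\bigr)\rhobar(s,\tau)\,ds\,d\tau$, applies Cauchy--Schwarz only in $s$, and then uses the \emph{pointwise-in-}$\tau$ bound $\|z_\e(t)-z_\e(t-\e\tau)\|_2\le C(\bar R\e)^{1/2}(1+\tau^{1/2})$, coming from the $L^2_tL^2_s$ bound on $\partial_t z_\e$ (\eqref{globalboundz0stepGlobal}) together with the jump estimate \eqref{zeps-discont} and the Lipschitz bound on $z_p$. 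The factor $(1+\tau^{1/2})\|\rhobar(\cdot,\tau)\|_2$ is $L^1_\tau$ precisely by the first part of \eqref{hyp10}, via $\int_0^\infty(1+\tau^{1/2})\|\rhobar(\cdot,\tau)\|_2\,d\tau\le C\int_0^\infty(1+a^{3/2})\|\rho(\cdot,a)\|_2\,da$. To fix your argument you should keep the Cauchy--Schwarz in $s$ only and integrate the resulting $L^1_\tau$ quantity, rather than insisting on a global $(s,\tau)$ Cauchy--Schwarz.
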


\begin{proof} (i) Integrating the equation 
	in space (i.e.,~taking $v=1$ as test-function in \eqref{mainweak}),  
	we obtain
	\be{ConservPpty}
	\int_I\int_0^\infty (z(s,t)-z(s,t-\e a))\rho(s,a)dads=0,\quad 0<t<T_\e^*.
	\ee
	Fix any $0<t<t'<T_\e^*$ and set $\ell=\e^{-1}(t'-t)$.  
	We denote $z(t)\cdot\rhobar(\tau)=\int_Iz(s,t)\rhobar(s,\tau)ds$ for conciseness.
	Putting $\tau=y+\ell$ in the definition of $\Theta(t')$, 
	 and denoting $y_+=\max(y,0)$, we obtain
	$$\begin{aligned}
		\Theta(t')-&\Theta(t)
		=\int_{-\ell}^\infty z(t-\e y)\cdot\rhobar(y+\ell)dy
		-\int_0^\infty z(t-\e \tau)\cdot\rhobar(\tau)d\tau\\
		&=\int_{-\ell}^\infty z(t-\e y)\cdot\bigl(\rhobar(y+\ell)-\rhobar(y_+)\bigr)dy
		+\int_{-\ell}^0 z(t-\e y)\cdot\rhobar(0)dy\\
		&=\int_{-\ell}^0\int_0^\infty z(t-\e y)\cdot\rho(a)dady-\int_{-\ell}^\infty\int_{y_+}^{y+\ell} z(t-\e y)\cdot\rho(a)dady
		\equiv I_1-I_2.
	\end{aligned}$$
	Noting that $I_2=\int_0^\infty\int_{a-\ell}^a z(t-\e y)\cdot\rho(a)dyda=\int_{-\ell}^0\int_0^\infty z(t-\e y-\e a)\cdot\rho(a)dady$
	by Fubini, and putting $\sigma=t-\e y$, it follows that
	$$\begin{aligned}
		\Theta(t')-\Theta(t)
		&= \int_{-\ell}^0\int_0^\infty \bigl(z(t-\e y)-z(t-\e y-\e a)\bigr)\cdot\rho(a)dady=\e^{-1}\int_t^{t'}
		\left(\int_0^\infty (z(\sigma)-z(\sigma-\e a))\cdot\rho(a)da\right)d\sigma,
	\end{aligned}$$
	hence $\Theta(t')=\Theta(t)$ 
	in view of \eqref{ConservPpty}.
	
	On the other hand, for each $s\in I$, $\tau>0$, we have $z(s,t'-\e \tau)\to z_p(s,-\e \tau)$ as $t'\to 0^+$,
	owing to  \eqref{hyppbm1locz0}.
	Moreover, for $t'\in(0, T_\e)$, we have
	$|z(s,t'-\e \tau)|\rho(s,a)\le\bigl(\sup_{\tau\in(-\infty, T_\e)}\|z(\tau)\|_\infty\bigr)\rho(s,a)\in L^1(I\times(0,\infty))$.
	Property \eqref{contphi0} follows by dominated convergence.
	\smallskip
	
	(ii) By \eqref{globalboundz0stepGlobal}
	we have
	\be{boundphipsi3}
	\begin{aligned}
		\bigl \|z(t)-z(t_0)\bigr\|_2
		&\le\int_{t_0}^t \|z_t(\sigma)\|_{2} d\sigma
		\le (t-t_0)^{1/2}\Bigl(\int_{t_0}^t \|z_t(\sigma)\|^2_2 d\sigma\Bigr)^{1/2}\le C(t-t_0)^{1/2}\bar R^{1/2},\quad 0<t_0<t<T_\e^*.
	\end{aligned}
	\ee
	Next, for $ -\infty<t_0<0<t<T_\e^*$, we may write
	$$\begin{aligned}
		\bigl \|z(t)-z(t_0)\bigr\|_2
		&\le \|z(t)-z(0^+)\|_2+\|z(0^+)-z_p(0)\|_2+\|z_p(0)-z_p(t_0)\|_2\le C\bar R^{1/2}t^{1/2}+C(R+\hat R)\e^{1/2}+C\hat R \,{\rm min}(1,|t_0|),
	\end{aligned}$$	
	owing to
	\eqref{boundphipsi3}, \eqref{zeps-discont} and  \eqref{hyppbm1locz0}.
	 Using ${\rm min}(1,|t_0|)\le |t_0|^{1/2}$, we obtain
	\be{boundphipsi2}
	\begin{aligned}
		\bigl \|z(t)-z(t_0)\bigr\|_2 
		&\le C\bar R^{1/2}((t-t_0)^{1/2}+\e^{1/2}),\quad -\infty<t_0<0<t<T_\e^*.
	\end{aligned}
	\ee
	Now, by the definition of $\rhobar, \rhostar$, the first part of assumption \eqref{hyp10}
	and  \eqref{contphi0}, we have 
		\be{boundphipsi00}
		\begin{aligned}
			\int_0^\infty\bigl(1+\tau^{1/2}\bigr)  \|\rhobar(\cdot, \tau) \|_2d\tau
			&\le\int_0^\infty\int_\tau^\infty \bigl(1+\tau^{1/2}\bigr)  \| \rho (\cdot,a) \|_2 dad\tau\le C \int_0^\infty \bigl(1+a^{3/2}\bigr)	 \| \rho (\cdot,a) \|_2 da<\infty
		\end{aligned}
		\ee
	
	and
	$$(z(t),\rhostar)-\kappa_\e=\int_0^\infty \int_I \bigl(z(s,t)-z(s,t-\e\tau)\bigr)\rhobar(s,\tau)dsd\tau,
	\quad  0<t<T_\e^*.$$ 
	Using 
	\eqref{boundphipsi3}-\eqref{boundphipsi00}, we obtain 
		\be{boundphipsi001}
		\begin{aligned}
			\bigl|(z(t),\rhostar)-\kappa_\e\bigr|& \leq \int_0^\infty \| z(t)-z(t-\e \tau) \|_2 \| {\varphi}(\cdot, \tau) \|_2 d\tau  \leq C(\bar R\e)^{1/2}\int_0^\infty \bigl(1+\tau^{1/2}\bigr) \|{\varphi}(\cdot, \tau) \|_2 d\tau,
			\quad   0<t<T_\e^*,
		\end{aligned}
		\ee
	hence \eqref{boundphipsi}.
\end{proof}

\begin{proof}[Proof of Proposition~\ref{thmunifbounds}]
	We claim that
	\be{zstn}
	\min_{s\in(0, L)}|z(s,t)|\le CR+C(\bar R\e)^{1/2}, \quad 0<t<T_\e^*.
	\ee
	If $z(s,t)=0$ for some $s\in (0, L)$, then there is nothing to prove.
	We may thus assume that $z(s,t)\ne 0$ for all $s\in(0, L)$.
	By continuity we may assume $z(\cdot,t)>0$ in $(0,L)$ (the case $z<0$ is similar).
	 Setting
		$$L_1:=\int_I\rhostar(s)ds=\int_I\int_{\tau=0}^\infty\rhobar(s,\tau) d\tau
		=\int_I\int_{a=0}^\infty a\rho(s,a) dads>0,$$
		and using \eqref{boundphipsi}, we obtain
		$$L_1\min_{s\in(0,L)}z(s,t)\le \int_Iz(s,t)\rhostar(s)ds\le |\kappa_\e|+C(\bar R\e)^{1/2},$$
		hence \eqref{zstn}.
		
		\smallskip
		
		It follows from \eqref{globalboundz1} and \eqref{zstn} that
		$$\|z(t)\|_\infty\le CR+C\bar R^{1/3}+C(\bar R\e)^{1/2}\le  C_3\bar R^{1/2}, \quad  0<t<T_\e^*,$$
		for some $C_3=C_3(\rho,L)>0$.
		This proves \eqref{claimbound}, hence \eqref{claimboundH2} 
		owing to estimates \eqref{globalboundz1} in Proposition~\ref{thm.dt.z.unif}.
	
	Now, we may choose $c_0>0$ depending only on $\rho, L$ such that
	$\e_0:=c_0\bar R^{-4}\le\e_1$ and $c_5\e_0^{-1/8}> C_3\bar R^{1/2}$. As a direct consequence of Theorem~\ref{thmloc}(iii),
	we deduce that $T_\e^*=\infty$.
	Properties \eqref{globalboundz0stepGlobal}, \eqref{globalboundz1stepGlobal} then follow from 
		\eqref{globalboundz0step}-\eqref{Energy-init2}.
	
	\smallskip
	
	Finally, using \eqref{globalboundz1}, \eqref{claimbound} and the differential equation \eqref{mainstrong1},
	we get 
	$$
	\sup_{t>0} \|z_\e(t)\|_{H^4(I)}<\infty,
	$$
	hence in particular 
	$\sup_{t>0} \|z_\e(t)\|_{W^{2,\infty}(I)}<\infty$. 
	Going back to \eqref{mainstrong1} we obtain \eqref{claimbound2}.
\end{proof}

\section{H\"older continuity with respect to $\e$ (proof of Theorem~\ref{thmglob}{\rm (ii)})} \label{SecCont}

It is a direct consequence of:

\begin{prop} \label{prop-conteps}
	Assume \eqref{hyppbm1loc}--\eqref{hyp10}, \eqref{hyppbm1locz0},
	and let $\e_0$ be given by Proposition~\ref{thmunifbounds}.
	We have
	\be{conteps0}
	\|(z_\e-z_{\bar\e})(t)\|_{H^2} \le C\bar\e^{-1/4} \bar R({\bar\e}-\e)^{1/4}\exp\bigl\{\bar\e^{-1}\|\rho\|_\infty t\bigr\}. 
	\quad 0<\e<\bar\e<\e_0,
	\ee
\end{prop}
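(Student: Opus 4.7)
The plan is to derive an integro-differential inequality for $w := z_\e - z_{\bar\e}$ and apply Gronwall's lemma. First I dispose of the regime $\e < \bar\e/2$: the a priori bound \eqref{claimboundH2} yields $\|w(t)\|_{H^2}\le C\bar R^{1/2}$, while $\bar\e^{-1/4}(\bar\e-\e)^{1/4}\ge 2^{-1/4}$ in this range, so the claim follows from the uniform bound. Hence I may assume $\bar\e/2\le \e<\bar\e$.

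Writing \eqref{pbmloc} for both $z_\e$ and $z_{\bar\e}$, adding $(\bar\e-\e) z_\e''''$ to the first equation, subtracting, and extracting $\bar\e$ from the $F'$-difference yields
\begin{equation*}
\mu w + \bar\e w'''' = \bar\e[F'(z_\e')-F'(z_{\bar\e}')]' + g_1 + g_{2a} + g_{2b},
\end{equation*}
with $w''=0$ and $w'''=F'(z_\e')-F'(z_{\bar\e}')$ on $\partial I$, where $g_1 := (\bar\e-\e)[z_\e''''-(F'(z_\e'))']$, $g_{2a} := \int_0^\infty[z_\e(t-\e a)-z_\e(t-\bar\e a)]\rho(\cdot,a)\,da$, and $g_{2b} := \int_0^{t/\bar\e}w(t-\bar\e a)\rho(\cdot,a)\,da$ (the upper limit is $t/\bar\e$ because $w\equiv 0$ for negative times). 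This identifies $w = A_{\bar\e}(F'(z_\e')-F'(z_{\bar\e}'),\,g_1+g_{2a}+g_{2b})$ with the resolvent of Section~\ref{SecRes}.

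I then apply Proposition~\ref{lemresolvent.bis}, exploiting assertion~(iii) with $j=0$ (applicable since $\mu\in H^2(I)$ by \eqref{hyppbm1loc}--\eqref{hyppbmloc10}), which gives $\|A_{2,\bar\e}h\|_{H^2}\le \tilde K\|h\|_{H^2}$ without a negative power of $\bar\e$. The $A_{1,\bar\e}$-contribution from $f:=F'(z_\e')-F'(z_{\bar\e}')$ is bounded by $K\bar\e^{1/4}C\bar R^{2/3}\|w\|_{H^2}$ using \eqref{globalboundz1}; for $\bar\e\le c_0\bar R^{-4}$ with $c_0$ small, this term is absorbed into the left-hand side. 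Substituting \eqref{pbmloc} into $g_1$ gives the key identity $g_1 = (\bar\e-\e)\e^{-1}[\int z_\e\rho\,da - \mu z_\e]$, which lies in $H^2$ with norm $\le C\bar R^{1/2}(\bar\e-\e)/\e\le 2C\bar R^{1/2}(\bar\e-\e)/\bar\e\le 2C\bar R^{1/2}\bar\e^{-1/4}(\bar\e-\e)^{1/4}$, the last step using $(\bar\e-\e)/\bar\e\le 1$ and $x\le x^{1/4}$ on $[0,1]$. For $g_{2b}$, the $H^2$ algebra property and the change of variable $\tau=t-\bar\e a$ yield $\|g_{2b}\|_{H^2}\le C\bar\e^{-1}\|\rho\|_{L^\infty_a(H^2)}\int_0^t\|w(\tau)\|_{H^2}\,d\tau$, which is precisely the Gronwall kernel. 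For $g_{2a}$, I use the $H^2$-in-time regularity of $z_\e$ derived from the energy bound \eqref{globalboundz0stepGlobal}, namely $\|z_\e(t_1)-z_\e(t_2)\|_{H^2}\le C\bar R^{1/2}\bar\e^{-1/2}|t_1-t_2|^{1/2}$ for $t_1,t_2\ge 0$, combined with the jump estimate of Proposition~\ref{lem.discont} at $t=0$ and the Lipschitz bound on $z_p$ from \eqref{hyppbm1locz0}; the moment conditions \eqref{hyp10} then allow the integration against $\rho(\cdot,a)$ to stay within the target budget $C\bar\e^{-1/4}\bar R(\bar\e-\e)^{1/4}$.

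Assembling everything yields
\begin{equation*}
\|w(t)\|_{H^2}\le C\bar\e^{-1/4}\bar R(\bar\e-\e)^{1/4} + C\bar\e^{-1}\|\rho\|_\infty\int_0^t \|w(\tau)\|_{H^2}\,d\tau,
\end{equation*}
and Gronwall delivers the stated bound with exponential factor $\exp(\bar\e^{-1}\|\rho\|_\infty t)$. The main obstacle is the estimate for $g_{2a}$: the $H^2$-time regularity of $z_\e$ breaks down across the jump at $t=0$ (Proposition~\ref{lem.discont} only controls the $L^2$-jump), forcing one to analyse the singular interval $a\in[t/\bar\e,t/\e]$ of measure $O(t(\bar\e-\e)/\bar\e^2)$ delicately — likely via interpolation between the $L^2$-jump bound of Proposition~\ref{lem.discont} and the uniform $H^2$-bound \eqref{claimboundH2} on $z_\e$ — with any residual polynomial factors in $\bar\e^{-1}$ absorbed by the Gronwall exponential thanks to the decay of $\rho$ at infinity encoded in the moment conditions \eqref{hyp10}.
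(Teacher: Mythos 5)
Your approach is genuinely different from the paper's: you cast the difference $w=z_\e-z_{\bar\e}$ as a resolvent problem $w=A_{\bar\e}(f,g_1+g_{2a}+g_{2b})$ and run a Gronwall argument on $\|w\|_{H^2}$, whereas the paper simply tests the difference of the weak formulations against $v=w$, which produces an identity for the \emph{weighted} quantity $\Lambda_{\bar\e,\e}(t)=\bar\e\|w''(t)\|_2^2+\|w(t)\|_2^2$ and then Gronwall is applied to $\Lambda$ (the final $\bar\e^{-1/4}(\bar\e-\e)^{1/4}$ factor drops out automatically when one reads off $\|w''\|_2$ from $\Lambda^{1/2}/\bar\e^{1/2}$). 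Your handling of $g_1$, of $g_{2b}$, and of the nonlinear $A_{1,\bar\e}$-term is correct and matches the budget, and the reduction to $\bar\e/2\le\e<\bar\e$ is a clean preliminary step.

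However, there is a genuine gap in the treatment of $g_{2a}$, and your closing remarks about "interpolation" and "absorption into the Gronwall exponential" do not resolve it. Since you estimate $\|A_{2,\bar\e}g_{2a}\|_{H^2}$ through $\|g_{2a}\|_{H^2}$ (via Proposition~\ref{lemresolvent.bis}(iii)), you need the time-increment $\|z_\e(t_1)-z_\e(t_2)\|_{H^2}$ to be small across the jump at $t=0$; but Proposition~\ref{lem.discont} controls $\|z_\e(0^+)-z_p(0)\|_{2}$ only in $L^2$, and interpolation against the uniform $H^2$ bound \eqref{claimboundH2} yields at best an $H^1$ smallness, never $H^2$. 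Moreover, absorption of a residual $\bar\e^{-\alpha}$ prefactor into $\exp(\bar\e^{-1}\|\rho\|_\infty t)$ is false: for $t\lesssim\bar\e$ the exponential is $O(1)$ while $\bar\e^{-\alpha}\to\infty$, so such a factor cannot be swallowed. The paper avoids the problem entirely because testing against $w$ only asks for an $L^2(\rho\,da\,ds)$ bound of the shift $z_\e(t-\e a)-z_\e(t-\bar\e a)$ (this is the paper's \eqref{conteps2}, proved via the $L^2$-Hölder estimate \eqref{boundphipsi3} and the $L^2$-jump bound \eqref{boundphipsi2}). Your route \emph{can} be salvaged: estimate $\|A_{2,\bar\e}g_{2a}\|_{H^2}\le K\bar\e^{-1/2}\|g_{2a}\|_{2}$ using Proposition~\ref{lemresolvent.bis}(ii) with $j=2$, $q=2$, and combine this with an $L^2$-bound $\|g_{2a}\|_2\le C\bar R^{1/2}(\bar\e(\bar\e-\e))^{1/4}$ deduced from \eqref{conteps2} via Cauchy--Schwarz in $a$; the product is $C\bar R^{1/2}\bar\e^{-1/4}(\bar\e-\e)^{1/4}$, within budget, while you keep the $H^2$-to-$H^2$ estimate for $g_{2b}$ so the Gronwall rate stays $O(\bar\e^{-1})$. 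But as written, with $g_{2a}$ pushed through the $H^2$-to-$H^2$ resolvent bound, the argument does not close.
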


\begin{proof}[Proof of Proposition~\ref{prop-conteps}]
		{\bf Step 1.} {\it First estimate of the difference.}
		Let $0<\e<\bar\e<\e_0$ and $t>0$. We claim that 
		\be{conteps1}
		\begin{aligned}
			{\bar\e} \int_I &(z''_\e-z''_{\bar\e})^2(t) ds+\int_I (z_\e-z_{\bar\e})^2(t) ds \le C\bar R^2({\bar\e}-\e)+C\int_I \int_0^\infty (z_\e-z_{\bar\e})^2(t-\bar\e a)\rho(s,a)dads \\
			&+C\int_I \int_0^\infty (z_\e(t-\e a)- z_\e(t-{\bar\e} a))^2\rho(s,a) dads.
		\end{aligned}
		\ee
		
		To this end, subtracting the equation \eqref{mainweak} for $z_\e$ and for $z_{\bar\e}$
		and choosing $v=(z_\e-z_{\bar\e})(t)$, we obtain
		(omitting the variable $s$ without risk of confusion)
		\be{conteps1a}
		\begin{aligned}
			&{\bar\e} \int_I (z''_\e-z''_{\bar\e})^2(t) ds+ \int_I {\mu} (z_\e-z_{\bar\e})^2(t) ds 
			={\bar\e}
			\int_I (F'(z'_\e)-F'(z'_{\bar\e}))(t) (z_\e-z_{\bar\e})'(t) ds \\
			&\quad+({\bar\e}-\e) \left\{\int_I z''_\e(t) (z_\e-z_{\bar\e})''(t)ds-\int_I (F'(z'_{\e}(t)))(z_\e-z_{\bar\e})'(t)\right\}ds\\
			&\quad+\int_I \int_0^\infty (z_\e-z_{\bar\e})(t-\bar\e a)\rho(s,a) (z_\e-z_{\bar\e})(t) dads +\int_I \int_0^\infty (z_\e(t-\e a)- z_\e(t-{\bar\e} a))\rho(s,a) (z_\e-z_{\bar\e})(t) dads.
		\end{aligned}
		\ee
		To estimate the first term on the right hand side of \eqref{conteps1a}, we use \eqref{globalboundz1},
		\eqref{claimboundH2} and the Sobolev embedding to write
		$$
		\left| F'(z'_\e)-F'(z'_{\bar\e})\right| \leq 4\left\{ \left| z'_\e - z_{\bar\e}' \right| + 
		\left| (z'_\e)^3 - (z_{\bar\e}')^3\right|\right\} \leq C\bar R\left| z'_\e - z_{\bar\e}'\right| .
		$$
		Next,  using \eqref{interpol-eta}, one writes as above:
		$$
			 \left| \int_I (F'(z'_\e)-F'(z'_{\bar\e})) (z'_\e-z'_{\bar\e}) ds\right| 
			  \leq 
			C\bar R
			\int_I \left| z'_\e(s,t) - z_{\bar\e}'(s,t)\right|^2 ds 
			\leq  C_4\bar R
			\left\{\delta \| z_\e''(t) - z_{\bar\e}''(t)\|_2^2+ (1+\delta^{-1})  \| z_\e(t) - z_{\bar\e}(t)\|_2^2 \right\}
		$$
		with $C_4=C_4(\rho,L)>0$.
		Choosing $\delta=(2C_4\bar R)^{-1}$, using $\bar R^2\bar\e\le c_0\bar R^{-2}\le c_0$
			and taking $c_0=c_0(\rho,L)>0$ smaller in Proposition~\ref{thmunifbounds} if necessary, we obtain
		$$\begin{aligned}
			{\bar\e}& \left| \int_I (F'(z'_\e(t))-F'(z'_{\bar\e}(t)))(z_\e-z_{\bar\e})'(t)ds\right|\le C\bar R^2\bar\e \int_I (z_\e-z_{\bar\e})^2(t) ds+\frac{{\bar\e}}{2}\int_I (z''_\e-z''_{\bar\e})^2(t) ds\\
			&\qquad\le \frac{\mu_{min}}{2} \int_I (z_\e-z_{\bar\e})^2(t) ds+\frac{{\bar\e}}{2}\int_I (z''_\e-z''_{\bar\e})^2(t) ds.
		\end{aligned}$$
		Next using 
		\eqref{claimboundH2}, we can estimate the third term on the right hand side of \eqref{conteps1a} by
		$$
		({\bar\e}-\e) \int_I \bigl|z''_\e(t) (z_\e-z_{\bar\e})''(t)\bigr|ds+\int_I \bigl|F'(z'_{\e}(t))(z_\e'-z_{\bar\e}')(t)\bigr|ds 
		\le C\bar R^2({\bar\e}-\e).$$
		Estimating the remaining two terms by Young's inequality, we get \eqref{conteps1}.
		
		\smallskip
		{\bf Step 2.} {\it Control of the time shift term in the right hand side of \eqref{conteps1}.}
		We claim that
		\be{conteps2}
		\int_I \int_0^\infty \bigl|z_\e(t-\e a)- z_\e(t-{\bar\e} a)\bigr|^2\rho(s,a) dads
		\le C\bar R\bigl(\bar\e({\bar\e}-\e)\bigr)^{1/2}.
		\ee
		To this end, recalling that $z_\e(\tau)=z_{\bar\e}(\tau)=z_p$ for $\tau<0$, we split the left hand side of \eqref{conteps2} as
		$$\begin{aligned}
			\int_I &\int_0^\infty  \bigl|z_\e(t-\e a)- z_\e(t-{\bar\e} a)\bigr|^2\rho(s,a) dads 	=\int_I \int_0^{t/{\bar\e}}  \bigl|z_\e(t-\e a)- z_\e(t-{\bar\e} a)\bigr|^2\rho(s,a) dads \\
			& \qquad+\int_I \int_{t/{\bar\e}}^{t/\e}  \bigl|z_\e(t-\e a)- z_\e(t-{\bar\e} a)\bigr|^2\rho(s,a) dads.
		\end{aligned}$$
		By \eqref{boundphipsi3}, we have
		\be{conteps2a}
		\int_I \int_0^{t/{\bar\e}}  \bigl|z_\e(t-\e a)- z_\e(t-{\bar\e} a)\bigr|^2\rho(s,a) dads
		\le C({\bar\e}-\e)\bar R\int_I \int_0^\infty a \rho(s,a) dads.
		\ee
		On the other hand, by \eqref{boundphipsi2}, we have
		$$
		\int_I \bigl|z_\e(t-\e a)- z_\e(t-{\bar\e} a)\bigr|^2 ds\le C\bar R\bigl((\bar\e-\e)a+\e\bigr),\quad t/\bar\e<a<t/\e.$$
		Since, using the second part of assumption \eqref{hyp10},
		$$\begin{aligned}
			\int_{t/{\bar\e}}^{t/\e}\|\rho(\cdot,a)\|_\infty da
			&\le C\min\left\{\frac{\bar\e}{t} \int_0^\infty a \|\rho(\cdot,a)\|_\infty da,
			\frac{t({\bar\e}-\e)}{\e{\bar\e}}\|\rho\|_\infty \right\} \le C\min\left\{\frac{\bar\e}{t},\frac{t({\bar\e}-\e)}{\e{\bar\e}}\right\}
			\le C\Bigl(\frac{{\bar\e}-\e}{\e}\Bigr)^{1/2},
		\end{aligned}$$
		it follows that
		$$\begin{aligned}
			\int_I &\int_{t/{\bar\e}}^{t/\e}  \bigl|z_\e(t-\e a)- z_\e(t-{\bar\e} a)\bigr|^2\rho(s,a) dads\le  \int_{t/{\bar\e}}^{t/\e} C\bar R\bigl((\bar\e-\e)a+\e\bigr) \|\rho(\cdot,a)\|_\infty da\\
			&\le  C\bar R(\bar\e-\e)\int_0^\infty a \|\rho(\cdot,a)\|_\infty da
			+ C\bar R\e\int_{t/{\bar\e}}^{t/\e}  \|\rho(\cdot,a)\|_\infty da\le  C\bar R(\bar\e-\e)
			+ C\bar R\e\Bigl(\frac{{\bar\e}-\e}{\e}\Bigr)^{1/2}\le C\bar R\bigl(\bar\e({\bar\e}-\e)\bigr)^{1/2}.
		\end{aligned}$$
		Claim \eqref{conteps2} follows by adding this with \eqref{conteps2a}.

	\smallskip
	{\bf Step 3.} {\it Conclusion.}
	Letting $\delta=C\bar R^{2}\bigl(\bar\e({\bar\e}-\e)\bigr)^{1/2}$,
	$$\Lambda_{\bar\e,\e}(t)={\bar\e} \int_I (z''_\e-z''_{\bar\e})^2(t) ds+\int_I (z_\e-z_{\bar\e})^2(t) ds,$$
	combining \eqref{conteps1}, \eqref{conteps2}, 
	and using that $z_\e(\tau)=z_{\bar\e}(\tau)=z_p$ for $\tau<0$, 
	we obtain
	\be{conteps3}
	\begin{aligned}
		\Lambda_{\bar\e,\e}(t)
		&\le \delta+C\int_0^{t/\bar\e} \Lambda_{\bar\e,\e}(t-\bar\e a)\|\rho\|_\infty da
		\le \delta+C\bar\e^{-1}\int_0^t \Lambda_{\bar\e,\e}(\tau)\|\rho\|_\infty d\tau.
	\end{aligned}
	\ee
	It follows from Gronwall's lemma that
	$\Lambda_{\bar\e,\e}(t)\le \delta\exp\bigl\{\bar\e^{-1}\|\rho\|_\infty t\bigr\}$,
	hence \eqref{conteps0}.
\end{proof}

\section{Proof of convergence as $t\to\infty$ (Theorem~\ref{thmglob}{\rm (iii)})}\label{SecConv}

We denote by $\mathcal{S}$  the set of steady states, i.e.~solutions of \eqref{eqstatz}.
The $\omega$-limit set of $z_\e$ is defined by
\be{defomegaze}
\omega(z_\e):=\left\{Z\in H^2(I);\, \exists t_n\to\infty,\ 
\lim_n \|z_\e(t_n)-Z\|_{H^2(I)}=0\right\}.
\ee
Also, recalling \eqref{Sndef2}, 
we define
$$\mathcal{S}_K:=\bigl\{Z\in \mathcal{S};\ (Z,\rhostar)=K\bigr\},\quad K\in\R.$$
For fixed $\e\in(0,\e_0]$, the convergence as $t\to\infty$ will be a direct consequence
of the following quasiconvergence property, along with the structure of steady states.

\begin{lem} \label{prop-quasistat}
	Let the assumptions of Theorem~\ref{thmglob} be in force and let $\e\in(0,\e_0]$.
	\smallskip
	
	(i) The set $\omega(z_\e)$ is a nonempty compact connected subset of $H^2(I)$.
	
	\smallskip
	
	(ii) We have $\omega(z_\e)\subset \mathcal{S}_{\kappa_\e}$.
\end{lem}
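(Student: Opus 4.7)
My plan is to combine three ingredients: the uniform regularity bound \eqref{claimbound2}, the time-derivative integrability \eqref{globalboundz0stepGlobal}, and the conservation identity $\Theta_\e\equiv\kappa_\e$ furnished by Lemma~\ref{lemunifbounds}(i).

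For (i), the bound \eqref{claimbound2} together with the compact embedding $W^{4,\infty}(I)\hookrightarrow\hookrightarrow H^2(I)$ makes the orbit $\{z_\e(t)\}_{t\ge 0}$ relatively compact in $H^2(I)$, so $\omega(z_\e)$ is nonempty and compact. Since \eqref{regulhigher} provides $z_\e\in C([0,\infty);H^2(I))$, connectedness follows from the standard fact that the $\omega$-limit set of a relatively compact, continuous trajectory is connected.

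For (ii), fix $Z\in\omega(z_\e)$ and a sequence $t_n\to\infty$ with $z_\e(t_n)\to Z$ in $H^2(I)$. The crucial preliminary step is the pointwise convergence
\begin{equation}\label{keyconv-plan}
z_\e(t_n-\sigma)\to Z\quad\text{in $L^2(I)$, for every fixed $\sigma\ge 0$,}
\end{equation}
obtained by Cauchy--Schwarz and \eqref{globalboundz0stepGlobal}:
\[
\|z_\e(t_n-\sigma)-z_\e(t_n)\|_{L^2}\le \sigma^{1/2}\Bigl(\int_{t_n-\sigma}^{t_n}\|\partial_t z_\e(\tau)\|_{L^2}^2\,d\tau\Bigr)^{1/2}\to 0.
\]
Equipped with this, I pass to the limit in \eqref{mainweak} evaluated at $t=t_n$ (pointwise by \eqref{regulhigher}). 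For any $v\in H^2(I)$, the terms $(z''_\e(t_n),v'')$ and $(F'(z'_\e(t_n)),v')$ converge to the analogous $Z$-expressions by the $H^2$-convergence and the Sobolev embedding. For the delay term, for each $a>0$ the integrand $(z_\e(s,t_n)-z_\e(s,t_n-\e a))\rho(s,a)v(s)$ tends to zero in $L^1_s(I)$ by \eqref{keyconv-plan}, with uniform dominant $2C\bar R^{1/2}\rho(s,a)|v(s)|$ integrable on $I\times(0,\infty)$ by \eqref{claimbound} and \eqref{hyp10}. Dominated convergence then gives $(\cL_\e[z_\e](t_n),v)\to 0$, so $(Z'',v'')+(F'(Z'),v')=0$ for all $v\in H^2(I)$, and the discussion at the opening of Section~\ref{SecSS} promotes $Z$ to a classical solution of \eqref{eqstatz}.

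It remains to verify $(Z,\rhostar)=\kappa_\e$. I apply the same pointwise convergence together with dominated convergence to the conservation identity $\Theta_\e(t_n)=\kappa_\e$: the integrand $z_\e(s,t_n-\e\tau)\rhobar(s,\tau)$ is bounded in modulus by $C\bar R^{1/2}\rhobar(s,\tau)$, integrable on $I\times(0,\infty)$ since $\int_I\rhostar\,ds<\infty$ by \eqref{hyp10}, while \eqref{keyconv-plan} yields pointwise-in-$\tau$ convergence to $Z(s)\rhobar(s,\tau)$. Letting $n\to\infty$ delivers $\kappa_\e=(Z,\rhostar)$. The main technical obstacle throughout is the nonlocal delay term: both its vanishing in the first passage to the limit and the legitimacy of dominated convergence in the second rest on \eqref{keyconv-plan}, which itself exploits the dissipation estimate \eqref{globalboundz0stepGlobal} originating from the energy monotonicity together with the nontrivial invariant $\Theta_\e$.
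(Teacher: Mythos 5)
Your proof is correct and takes essentially the same approach as the paper: the key ingredients in both are the time-derivative integrability from the energy dissipation, the conservation identity $\Theta_\e\equiv\kappa_\e$, and dominated convergence. The only organizational difference is that the paper first factors out a separate lemma (Lemma~\ref{lem-quasistat}) establishing the decay $\|\cL_\e[z_\e](t)\|_{L^p}\to 0$ and the convergence $(z_\e(t),\rhostar)\to\kappa_\e$ as $t\to\infty$ (i.e.~along the full limit, not merely along the subsequence $t_n$), and uses the $L^\infty$ variant \eqref{globalboundz1stepGlobal} rather than \eqref{globalboundz0stepGlobal} in bounding the delay increment; you work directly along $t_n$ and pair the $L^2$ increment estimate with the test function via Cauchy--Schwarz, which is a slightly more economical route to the same conclusion.
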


Whereas the proof of Lemma~\ref{prop-quasistat}(i) is standard, the proof of assertion (ii) relies on two ingredients: the decay of the delay term $\cL_\e [z_\e]$ as $t\to\infty$
and the good properties of the inner product of $z_\e(t)$ with $\rhostar$.

\begin{lem} \label{lem-quasistat}
	Let the assumptions of Theorem~\ref{thmglob} be in force.
	\begin{itemize}
		\item[(i)]Let $p\in[1,\infty)$. The function $\cL_\e [z_\e]$ satisfies the decay property
		\be{quasistat1}
		\lim_{t\to\infty}\|\cL_\e [z_\e](\cdot,t)\|_{L^p(I)}=0,\quad\hbox{for each $\e\in(0,\e_0]$.}
		\ee
		
		\item[(ii)]We have the convergence
		\be{quasistat1b}
		\lim_{t\to\infty}(z_\e(t), \rhostar)=\kappa_\e,\quad\hbox{for each $\e\in(0,\e_0]$.}
		\ee
	\end{itemize}
\end{lem}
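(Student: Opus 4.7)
The plan for both assertions is to base the argument on a time-asymptotic compactness procedure combining the uniform-in-time bound $\sup_{t>0}\|z_\e(t)\|_{W^{4,\infty}(I)}<\infty$ from \eqref{claimbound2} with the global estimate $\int_0^\infty\|\partial_t z_\e\|_2^2\,dt<\infty$ from \eqref{globalboundz0stepGlobal}. Given an arbitrary sequence $t_n\to\infty$, I would introduce the shifted profiles $w_n(\sigma):=z_\e(t_n+\sigma)$ and fix $T>0$. By Cauchy--Schwarz,
\[
\sup_{\sigma\in[-T,T]} \|w_n(\sigma)-w_n(0)\|_2 \le (2T)^{1/2}\Bigl(\int_{t_n-T}^{t_n+T}\|\partial_t z_\e\|_2^2\,dt\Bigr)^{1/2} \longrightarrow 0 \quad (n\to\infty),
\]
while the $W^{4,\infty}$ bound together with the compact embedding $W^{4,\infty}(I)\hookrightarrow C^3(\bar I)$ allows, after a diagonal extraction, to assume $w_n(0)\to \tilde Z$ in $C^3(\bar I)$ for some $\tilde Z\in W^{4,\infty}(I)$. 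Interpolating between uniform $L^2$-convergence in $\sigma$ and the uniform $W^{4,\infty}$ bound then yields $w_n(\sigma)\to \tilde Z$ in $C^3(\bar I)$ uniformly for $\sigma$ in any bounded interval.

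For assertion (i), fix $\eta>0$. Setting $M:=\sup_t\|z_\e(t)\|_\infty<\infty$ and using the second part of \eqref{hyp10}, I would choose $T>0$ so large that $(2M/\e)\int_{T/\e}^\infty \|\rho(\cdot,a)\|_\infty\,da\le \eta/2$. Splitting
\[
\cL_\e[z_\e](t_n)=\frac{1}{\e}\int_0^{T/\e}(w_n(0)-w_n(-\e a))\rho(\cdot,a)\,da+\frac{1}{\e}\int_{T/\e}^\infty(w_n(0)-w_n(-\e a))\rho(\cdot,a)\,da,
\]
the second term is bounded pointwise by $\eta/2$, while the first tends to $0$ in $L^\infty(I)$ as $n\to\infty$ thanks to the uniform convergence $w_n\to\tilde Z$ on $[-T,0]$. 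Hence $\|\cL_\e[z_\e](\cdot,t_n)\|_\infty\le \eta$ for $n$ large. Since the initial sequence was arbitrary, a standard subsequence argument gives $\|\cL_\e[z_\e](\cdot,t)\|_\infty\to 0$ as $t\to\infty$, and \eqref{quasistat1} for general $p\in[1,\infty)$ follows since $|I|<\infty$.

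For assertion (ii), I would combine Fubini with the conservation identity $\Theta_\e(t)=\kappa_\e$ (see \eqref{contphi0}) and the relation $\rhostar=\int_0^\infty\bar\rho\,d\tau$ from \eqref{Sndef2} to obtain
\[
(z_\e(t),\rhostar)-\kappa_\e=\int_0^\infty\int_I(z_\e(s,t)-z_\e(s,t-\e\tau))\bar\rho(s,\tau)\,ds\,d\tau,
\]
which is then estimated exactly as in (i), splitting at $\tau=T$: the tail $\tau>T$ is controlled by $2M\int_T^\infty\|\bar\rho(\cdot,\tau)\|_1\,d\tau$, which is small for large $T$ because $\int_0^\infty\|\bar\rho(\cdot,\tau)\|_1\,d\tau=\int_I\rhostar(s)\,ds<\infty$ in view of \eqref{hyp10}; the piece $\tau\in[0,T]$ tends to $0$ by the uniform convergence of $w_n$ on $[-\e T,0]$. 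The subsequence argument again delivers \eqref{quasistat1b}.

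The main subtle point is that $\partial_t z_\e\in L^2(0,\infty;L^2(I))$ by itself gives no pointwise-in-time decay of $z_\e$. The essential input is the uniform-in-time $W^{4,\infty}$ bound \eqref{claimbound2}, whose spatial compactness is what converts $L^2$-time integrability of $\partial_t z_\e$ into the strong equicontinuity required to identify a $\sigma$-independent shift limit and pass to the limit in the memory integrals.
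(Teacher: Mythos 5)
Your argument is correct, but it takes a genuinely different route from the paper's. The paper's proof is a direct dominated-convergence argument in the age variable: for each fixed $(s,a)$, one has
\[
\e^{-1}|z(s,t)-z(s,t-\e a)|\rho(s,a)\le \sqrt{a/\e}\,\rho(s,a)\Bigl(\int_{t-\e a}^t \|\partial_tz_\e(\tau)\|_\infty^2\,d\tau\Bigr)^{1/2}\longrightarrow 0
\]
as $t\to\infty$ (by the $L^2$-in-time integrability from \eqref{globalboundz1stepGlobal}), together with a $t$-independent dominant $C\bar R^{1/2}\e^{-5/8}\sqrt{a}\,\rho(s,a)\in L^1$, which yields $L^1(I)$ decay of $\cL_\e[z_\e]$; interpolation with the uniform $L^\infty$ bound then gives all finite $p$. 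Assertion (ii) is handled analogously with $\|\cdot\|_2$ and the weight $\rhobar$. By contrast you replace dominated convergence by a compactness argument: shifted profiles $w_n(\sigma)=z_\e(t_n+\sigma)$, extraction of a limit in $C^3(\bar I)$ using the fixed-$\e$ a priori bound $\sup_t\|z_\e(t)\|_{W^{4,\infty}}<\infty$ from \eqref{claimbound2}, interpolation, and a subsequence/contradiction closure. Both proofs are sound. Yours buys a slightly stronger conclusion — decay of $\cL_\e[z_\e]$ in $L^\infty(I)$, not just $L^p$ for finite $p$ — at the cost of invoking the $W^{4,\infty}$ bound, which the paper's argument does not need at this stage. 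One could also remark that your use of the $W^{4,\infty}$ bound is actually avoidable here: the uniform-in-$a$ $L^\infty$ estimate $\|w_n(0)-w_n(-\e a)\|_\infty\le(\e a)^{1/2}(\int_{t_n-\e a}^{t_n}\|\partial_tz_\e\|_\infty^2)^{1/2}$ already gives, directly from \eqref{globalboundz1stepGlobal}, the convergence of the truncated memory integral without any extraction of a limit profile, which essentially collapses your argument back to the paper's dominated-convergence strategy.
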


\begin{proof}
	(i) We have
	$$\int_I |\cL_\e z(s,t)|ds\le \int_{a=0}^\infty \int_I H_\e(t,s,a) ds da,\quad\hbox{ where } H_\e(t,s,a):=\e^{-1}|z(s,t)-z(s,t-\e a)|\rho(s,a).$$
	For each fixed $(s,a)\in I\times(0,\infty)$, 
	inequality \eqref{globalboundz1stepGlobal} guarantees
	that, for all $\e\in(0,\e_0]$ and $t>\e a$, 
	\be{estimHeps}
	H_\e(t,s,a)\le \sqrt{\frac{a}{\e}} \,\rho(s,a)\Bigl(\int_{t-\e a}^t \|z_t(\tau)\|^2_\infty d\tau\Bigr)^{1/2}
	\le C\bar R^{1/2}\e^{-5/8}\sqrt{a}\rho(s,a).
	\ee
	Moreover, the first inequality in \eqref{estimHeps} implies $\lim_{t\to\infty}H_\e(t,s,a)=0$ for each fixed $\e,s,a$.
	Since $\sqrt{a}\rho(s,a)\in L^1(I\times(0,\infty))$, 
	by dominated convergence, we obtain \eqref{quasistat1} for $p=1$. 
	Since, on the other hand,
	$\|\cL_\e z(\cdot,t)\|_\infty\le C\bar R^{1/2}\e^{-1}\|\rho\|_{L^1(I\times(0,\infty))}$ by \eqref{claimbound},
	we deduce \eqref{quasistat1} for all finite $p$ by interpolation.
	
	\smallskip

	(ii) Fix $\e\in(0,\e_0]$.
	 Recalling \eqref{boundphipsi001} we have
		$$\bigl|(z(t),\rhostar)-\kappa_\e\bigr| \leq \int_0^\infty \|z(t)-z(t-\e \tau) \|_2 \| {\varphi}(\cdot, \tau) \|_2 d\tau$$
		and, for each fixed $\tau>0$,  \eqref{globalboundz1stepGlobal} guarantees that
		$$\|z(t)-z(t-\e \tau) \|_2\le \sqrt{\e\tau} \Bigl(\int_{t-\e\tau}^t \|z_t(\sigma)\|^2_2d\sigma\Bigr)^{1/2}\to 0,\, \text{as}\,\to\infty.$$
		On the other hand, we have  $\|z(t)-z(t-\e \tau) \|_2 \| \varphi(\cdot, \tau) \|_2 \le 
		C\bar R \|\varphi(\cdot, \tau) \|_2\in L^1_\tau(0,\infty)$, owing to \eqref{claimbound} and \eqref{boundphipsi00}.
	Property \eqref{quasistat1b} then follows by dominated convergence.
\end{proof}

\begin{proof}[Proof of Lemma~\ref{prop-quasistat}]
	(i) It is easy to verify that
	$$\omega(z_\e)= {\mathop{\capsize{\cap}}_{n\in\N^*}}
	\overline{K_n},\quad\hbox{where } K_n=\{z_\e(t);\ t\ge n\}.$$
	On the other hand, it follows from \eqref{claimbound2} that each $\overline{K_n}$ is compact
	(for the $H^2(I)$ topology). Moreover, since $z\in C((0,\infty);H^2(I))$ as a consequence of \eqref{zLip0}, 
	$K_n$ hence $\overline{K_n}$ is connected.
	We conclude that $\omega(z_\e)$ is a nonempty compact connected set, 
	as the intersection of a nonincreasing sequence of such sets.

	\smallskip
	
	(ii) Let $\e\in(0,\e_0]$. For any $v\in H^2(I)$ and $t>0$, we have
	\be{weaksolL2}
	\begin{aligned}
		\Bigl|\int_I \bigl(\zeps'' (s,t) v'' (s) + F'(\zeps'(s,t)) v'(s)\bigr)ds\Bigr| 
		&= \Bigl|\int_I\cL_\e z_\e (s,t)   v(s) ds\Bigr|\le \|\cL_\e  z_\e (\cdot,t)\|_{L^2(I)}\|v\|_{L^2(I)}.
	\end{aligned}
	\ee
	Let $Z\in\omega(z_\e)$.
	There exists a sequence $t_n\to\infty$ such that
	$\lim_n \|z_\e(t_n)-Z\|_{H^2(I)}=0$. 
	Passing to the limit in \eqref{weaksolL2} for $t=t_n$
	with help of \eqref{quasistat1} with $p=2$, it follows that
	$$\int_I \bigl(Z'' (s) v'' (s)  + F'(Z'(s)) v'(s)\bigr)ds=0.$$
	Consequently, $Z\in\mathcal{S}$.
	Taking $\kappa_\e$ given by Lemma~\ref{lem-quasistat}, property \eqref{quasistat1b} then guarantees that 
	 $(Z,\rhostar)=\kappa_\e$, hence $Z\in\mathcal{S}_{\kappa_\e}$.
\end{proof}

\begin{proof}[Proof of Theorem~\ref{thmglob}(iii)]
	The set $\omega(z_\e)$ is nonempty and connected by Lemma~\ref{prop-quasistat} (i).
	On the other hand, it is finite since $\omega(z_\e)\subset\mathcal{S}_{\kappa_\e}$ by 
	Lemma~\ref{prop-quasistat}(ii) and 
	$\mathcal{S}_{\kappa_\e}$ is finite by Proposition~\ref{propeqstatw2}.
	The set $\omega(z_\e)$ is thus a singleton, which proves the theorem.
\end{proof}

\section{Convergence when $\e$ goes to $0$ (proof of first part of Theorem~\ref{thm-cvE}{\rm (i)})}
\label{SecConvEps}

 The result will be a consequence of the following two lemmas. 

\begin{lem}\label{lem.eps.zero} 
	Assume \eqref{hyppbm1loc}--\eqref{hyp10}, \eqref{hyppbm1locz0} 
	and pick any sequence $\e_j\to 0^+$.
	\smallskip
	
	(i) There exists a subsequence $\e'_j$ and
	\be{class.zz.weak}
	\tilde z_0\in L^\infty([0,\infty); H^2(I))
	\cap\, C^\nu_{loc}([0,\infty); C^1(\overline I))
	\cap\, H^1_{loc}([0,\infty); L^2(I))
	\ee
	for all $\nu\in(0,1/8)$, such that, $\lim_j z_{\e'_j} = \tilde z_0$
	where, for each $T>0$, the convergence is strong in $C([0,T];$ $C^1(\overline I)$, weak in $H^1((0,T);L^2(I))$, 
	and weak-* in $L^\infty(0,T;H^2(I))$.
	
	\smallskip
	
	(ii) The function $\tilde z_0$ is a global solution of \eqref{mainstrong1-0}
	(cf.~Definition~\ref{def-auxilf}) with $b=\rhostar$ and $\tilde z(0) = \zp(0)$.
\end{lem}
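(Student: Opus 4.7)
The plan is to exploit the uniform estimates of Section~\ref{SecUnif} to extract a convergent subsequence by compactness, and then pass to the limit in the weak formulation.

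First I would collect the key $\e$-uniform bounds provided by Propositions~\ref{thmunifbounds} and \ref{thm.nrj}: for all $\e\in(0,\e_0]$,
\[
\|z_\e\|_{L^\infty(\R;H^2(I))}\le C\bar R^{1/2},\qquad \|\partial_t z_\e\|_{L^2(0,\infty;L^2(I))}\le C\bar R^{1/2}.
\]
By Banach--Alaoglu there is a subsequence along which $z_{\e'_j}\rightharpoonup^* \tilde z_0$ in $L^\infty(0,T;H^2(I))$ and $\partial_t z_{\e'_j}\rightharpoonup \partial_t \tilde z_0$ in $L^2(0,T;L^2(I))$, for each $T>0$. Invoking the Aubin--Lions--Simon lemma along the compact chain $H^2(I)\hookrightarrow\hookrightarrow C^1(\overline I)\hookrightarrow L^2(I)$, the same subsequence converges strongly in $C([0,T];C^1(\overline I))$. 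A diagonal extraction over $T_n\to\infty$ yields a single subsequence valid on every compact time interval. For the Hölder regularity, the bound $\|z_\e(t)-z_\e(s)\|_{L^2}\le C|t-s|^{1/2}$ combined with the $H^2$ bound and the interpolation $\|u\|_{H^\sigma}\le C\|u\|_{L^2}^{1-\sigma/2}\|u\|_{H^2}^{\sigma/2}$ gives, for $\sigma\in(3/2,2)$ and using $H^\sigma(I)\hookrightarrow C^1(\overline I)$,
\[
\|z_\e(t)-z_\e(s)\|_{C^1(\overline I)}\le C|t-s|^{(2-\sigma)/4},
\]
which yields any exponent $\nu<1/8$ upon choosing $\sigma$ close to $3/2$; the bound passes to $\tilde z_0$ by lower semicontinuity.

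To identify $\tilde z_0$ as a solution of \eqref{mainstrong1-0}, I would test \eqref{mainweak} against products $v(s)\phi(t)$ with $v\in H^2(I)$ and $\phi\in C^\infty_c((0,T))$ and pass to the limit term by term. The bending term converges by weak-$*$ convergence in $L^\infty(H^2)$; the nonlinearity $F'(z_{\e'_j}')$ converges uniformly on $[0,T]\times\overline I$ thanks to the strong $C^1$ convergence and the polynomial nature of $F'$. For the delay term the key identity is
\[
\cL_\e[z](s,t)=\int_0^\infty \partial_t z(s,t-\e\tau)\,\rhobar(s,\tau)\,d\tau,
\]
obtained by writing $z(s,t)-z(s,t-\e a)=\int_0^{\e a}\partial_t z(s,t-\theta)d\theta$ and applying Fubini together with the change of variable $\tau=\theta/\e$. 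Pairing with $v\phi$ and exchanging integrals,
\[
\int_0^T\!\!\int_I v\phi\,\cL_\e z_\e\, ds\,dt=\int_0^\infty\!\!\int_I v(s)\rhobar(s,\tau)\Bigl(\int_0^T\phi(t)\partial_t z_\e(s,t-\e\tau)\,dt\Bigr)ds\,d\tau.
\]
Since $\phi$ has compact support strictly inside $(0,T)$, for each fixed $\tau$ the inner integral—after translating $\phi$ by $\e\tau$—is the pairing of $\partial_t z_\e$ against a nearby $L^2$ test function, hence converges to $\int_0^T\phi(t)\partial_t\tilde z_0(s,t)dt$ by the weak $L^2$ convergence of $\partial_t z_\e$ and continuity of translations in $L^2$. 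Dominated convergence in $\tau$—justified by the uniform bound $\|\partial_t z_\e\|_{L^2(L^2)}\le C$ and the integrability $\int_0^\infty(1+\tau^{1/2})\|\rhobar(\cdot,\tau)\|_2 d\tau<\infty$ (already used in \eqref{boundphipsi00})—then delivers the limit $\int_0^T\!\!\int_I v\phi\,b\,\partial_t\tilde z_0\,ds\,dt$, in view of $\int_0^\infty\rhobar(s,\tau)d\tau=\rhostar(s)=b(s)$. Finally, the initial condition $\tilde z_0(0)=z_p(0)$ follows by combining Proposition~\ref{lem.discont} (which gives $z_\e(0^+)\to z_p(0)$ in $L^2(I)$ at rate $\e^{1/2}$) with the $C([0,T];C^1)$-continuity of the limit at $t=0$.

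The main obstacle is the convergence of the delay term, and within it the control of the past-data contribution arising when $t-\e\tau<0$ for large $\tau$. This region has measure $O(\e\tau)$ in $t$, but $\tau$ ranges over the full half-line, so the tail in $\tau$ must be killed uniformly in $\e$; this is what the decay assumption \eqref{hyp10} on $\rho$ provides, while the $W^{1,\infty}$ past-data bound \eqref{hyppbm1locz0} controls the pointwise integrand on the past window. Once the tail $\{\tau>M\}$ is made arbitrarily small uniformly in $\e$, the standard weak-convergence argument sketched above handles the truncated integral.
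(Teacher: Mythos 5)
Your proof of assertion~(i) is correct and follows essentially the same compactness route as the paper: uniform $L^\infty_t H^2_s$ and $L^2_t L^2_s$-bounds on $z_\e$ and $\partial_t z_\e$, an Aubin--Lions--Simon type extraction, and interpolation to obtain Hölder regularity in time with values in $C^1(\overline I)$. The paper packages the last step via fractional Sobolev--Slobodecki interpolation rather than the pointwise Hölder estimate you use, but the content is identical.

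For assertion~(ii), however, your argument contains a genuine gap. The identity
\[
\cL_\e[z](s,t)=\int_0^\infty \partial_t z(s,t-\e\tau)\,\rhobar(s,\tau)\,d\tau
\]
is false for the solutions at hand: $z_\e$ generically has a jump discontinuity at $t=0$ (cf.~Remark~\ref{rem-mainth}(iii) and Proposition~\ref{lem.discont}), so the fundamental theorem of calculus you invoke, $z(s,t)-z(s,t-\e a)=\int_0^{\e a}\partial_t z(s,t-\theta)\,d\theta$, fails whenever $t-\e a<0<t$. The corrected identity carries the extra term $\e^{-1}\bigl(z_\e(s,0^+)-z_p(s,0)\bigr)\rhobar(s,t/\e)$, and your subsequent weak-$L^2$ pairing, which treats $\partial_t z_\e$ as an honest $L^2((0,T)\times I)$ function, silently drops this contribution. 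The omission is repairable --- after testing with $v\phi$ the jump term is $O(\e^{1/2})$, since $\|z_\e(0^+)-z_p(0)\|_2\le C\e^{1/2}$ by Proposition~\ref{lem.discont} and $\int_0^T\e^{-1}\rhobar(s,t/\e)\,dt\le\rhostar(s)$ --- but it must be addressed explicitly. For comparison, the paper never differentiates $z_\e$ in time at the $\e>0$ level: it decomposes $\mathcal{D}_\e$ into $\mathcal{D}^1_\e-\mathcal{D}^2_\e-\mathcal{D}^3_\e$, transfers the time shift onto the smooth test function $\psi$ by a discrete integration by parts in $\mathcal{D}^2_\e$, isolates the past-data contribution as a separate boundary-like term $\mathcal{D}^3_\e$, and only integrates by parts in time \emph{after} passing to the limit, when the limit $\tilde z_0\in H^1_{loc}(L^2)$ no longer has a jump. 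That route sidesteps the issue that trips up your derivation.
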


\begin{proof}
	(i)  As a consequence of estimates \eqref{claimboundH2}, \eqref{globalboundz0stepGlobal},
		similar to \cite[Lemma 5, p.976]{OeSch}, we see that $\{z_\e\}$ is bounded in $L^\infty([0,\infty); H^2(I))\cap H^1_{loc}([0,\infty); L^2(I))$
		hence, by interpolation, it is bounded in
		$W^{\theta,2/\theta}_{loc}([0,\infty);H^{2(1-\theta)}(I))\hookrightarrow $ $ C^{(\theta-\eta)/2}_{loc} $ $([0,\infty);$ $ C^1(\overline I))$
		for all $\theta\in(0,1/4)$ and $\eta>0$.
		The conclusion then follows from standard compactness properties.
	
	\smallskip
	
	(ii) 
	Let $\e:=\e'_j$ be as in assertion (i) and fix $\psi \in C^1([0,T];H^2(I))$. 
		Taking $v=\psi(\cdot,t)$ in \eqref{mainweak} and integrating in time, we obtain
		\be{testzepsi}
		{\mathcal{D}}_{\varepsilon}(\rho,{z}_{\varepsilon},\psi)+\int_0^T\int_I \bigl(\zeps''\psi''+ F'(\zeps') \psi' \bigr)ds dt =0,
		\ee
		where
	$$
	{\mathcal{D}}_{\varepsilon}(\rho,{z}_{\varepsilon},\psi):={\frac{1}{\varepsilon}}\int_{I} 
	\int_{0}^{T}
	 \int_0^\infty
	\rho(s,a)\left({z}_{\varepsilon}(s,t)-{z}_{\varepsilon}(s,t-\varepsilon a)\right)d a\psi(s,t)d t d s.
	$$
	With the decomposition  
	$$
	\begin{aligned}
		{\mathcal{D}}_{\varepsilon}& (\rho,{z}_{\varepsilon},\psi) ={\frac{1}{\varepsilon}}\int_{I}\int_{0}^{T}\int_{\frac{T-t}{\varepsilon}}^{\infty}\rho(s,a){z}_{\varepsilon}(s,t)\psi(s,t)d a d t d s 
	 -\;\frac{1}{\varepsilon}\int_{I}\int_{0}^{T}\int_{0}^{\frac{T-t}{\varepsilon}}{z}_{\varepsilon}(s,t)(\psi(s,t+\varepsilon a)-\psi(s,t))\,\rho(s,a)d a d t d s \\
		& -\;\frac{1}{\varepsilon}\int_{I}\int_{0}^{T}\int_{\frac{t}{\varepsilon}}^{\infty}\rho(s,a)z_{p}(s,t-\varepsilon a)\psi(s,t)d a d t d s 
		=:  {\mathcal{D}}^1_\e- {\mathcal{D}}^2_\e-{\mathcal{D}}^3_\e,
	\end{aligned}
	$$
	we use an argument similar to \cite[Propositions~A.1 and A.2.~pp.41-42]{Mi.5}. 
	 Namely, using the change of variables $(t,a)=(T-\e h,a)$ (resp.,  $(t,a)=(\e(a-\tau),a)$), 
		the convergence $z_\e\to \tilde z_0$ in $L^\infty(I\times(0,T))$, the regularity of $\psi$
		and assumptions \eqref{hyp10}, \eqref{hyppbm1locz0}, it follows by dominated convergence that
		$$	\begin{aligned}
			{\mathcal{D}}^1_\e&=\int_{I}\int_{0}^{T/\e}\int_h^{\infty}\rho(s,a)(z_\e \psi)(s,T-\e h)dadhds \\
			& \xrightarrow[j\to\infty]{} \int_{I}\int\int_{\{0<h<a<\infty\}}\rho(s,a)(\tilde\zz \psi)(s,T)dadhds=\int_I (\tilde \zz\psi)(s,T)\rhostar(s) ds,\\
			{\mathcal{D}}^3_\e&=\int_{I}\int_{0}^\infty \int_0^a \rho(s,a)z_p(s,-\e\tau) \psi(s,\e(a-\tau))d\tau dads \\
			&\xrightarrow[j\to\infty]{}\int_{I}\int\int_{\{0<\tau<a<\infty\}}\rho(s,a)(z_p \psi)(s,0)d\tau dads=\int_I (z_p \psi)(s,0)\rhostar(s) ds,\\
			{\mathcal{D}}^2_\e& \xrightarrow[j\to\infty]{} \int_I\int_0^T \int_0^\infty z_0(s,t)  \dt  \psi(s,t)a \rho(s,a) da dt ds
			=\int_0^T \int_I  \tilde z_0 \dt \psi \rhostar  ds dt.
		\end{aligned}$$
		Therefore,
	$$
	\begin{aligned}
		{\mathcal{D}}_{\varepsilon}(\rho,{z}_{\varepsilon},\psi) \xrightarrow[j\to\infty]{} &\int_I\tilde \zz(T)\psi(T)\rhostar ds
		- \int_0^T \int_I   \tilde z_0 \dt \psi \rhostar  ds dt - \int_I z_p(0) \psi(0) \rhostar ds \\
		& = \int_0^T \int_I \dt \tilde{z}_0 \psi \rhostar ds dt
		+ \int_I (\tilde{z}_0-z_p)(0) \psi(0) \rhostar ds ,
	\end{aligned}
	$$
	where  the	 integration by parts in time is allowed since
	$\tilde{z}_0 \in H^1(0,T;L^2(I))$. 
	On the other hand, thanks to the weak and strong convergence  properties in assertion (i),
		we may pass to the limit in the integral term in \eqref{testzepsi}, and we get 
		$$ \int_0^T \int_I \dt \tilde{z}_0 \psi \rhostar ds dt
		+ \int_I (\tilde{z}_0-z_p)(0) \psi(0) \rhostar ds+\int_0^T\int_I \bigl(\tilde \zz''\psi'' + F'(\tilde \zz') \psi' \bigr)ds dt=0.$$
		By density, this remains true for all $\psi \in L^2(0,T);H^2(I))$.
		Now, for any $\theta\in H^2(I)$, by choosing $\psi_n(s,t)=(1-nt)_+\theta(s)$ and letting $n\to\infty$, it follows that $\int_I (\tilde{z}_0-z_p)(0) \theta \rhostar ds=0$,
		hence $\tilde{z}_0(0)=z_p(0)$ and  $\tilde z_0$ is a global solution of \eqref{mainstrong1-0}.
\end{proof}

\begin{lem}\label{lem.unique-weak}
	 Under the assumptions of Proposition~\ref{prop-pbm0},
	problem \eqref{mainstrong1-0} has at most one solution in
	the class $X:=L^\infty_{loc}([0,\infty);H^2(I))\cap H^1_{loc}([0,\infty);L^2(I))$.
\end{lem}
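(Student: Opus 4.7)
The plan is a standard energy/Gronwall argument, with minor care for the nonlinear boundary term and the cubic nonlinearity. Let $z_1, z_2 \in X$ be two solutions of \eqref{mainstrong1-0} with the same data $\phi$, $b=\rhostar$, and set $w:=z_1-z_2$. Since $z_i\in L^\infty_{loc}([0,\infty);H^2(I))$ and $H^2(I)\hookrightarrow C^1(\overline I)$ in one space dimension, for any fixed $T>0$ there is a constant $M_T$ such that
$$\|z_i(t)\|_{C^1(\overline I)}\le M_T,\quad t\in[0,T],\ i=1,2.$$

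First, I subtract the weak formulations satisfied by $z_1$ and $z_2$. Testing against $v=w(\cdot,t)\in H^2(I)$ (which is admissible for a.e.~$t$), integrating by parts, and using the natural boundary conditions $w''=0$ and $w'''=F'(z_1')-F'(z_2')$ on $\partial I$ (so that all boundary contributions cancel), I obtain for a.e.~$t\in(0,T)$,
$$\int_I b \, w\, \partial_t w\,ds+\int_I (w'')^2\,ds+\int_I \bigl(F'(z_1')-F'(z_2')\bigr)w'\,ds=0.$$
The chain rule $\int_I b\, w\,\partial_t w\,ds=\tfrac12\tfrac{d}{dt}\int_I b\, w^2\,ds$ is justified since $w\in L^\infty(0,T;H^2)\cap H^1(0,T;L^2)$ and $b\in H^2(I)\hookrightarrow L^\infty(I)$.

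Next, since $F'(\xi)=4\xi^3-4\xi$, I expand
$$F'(z_1')-F'(z_2')=4w'\bigl(z_1'^2+z_1'z_2'+z_2'^2-1\bigr),$$
and use the $L^\infty$ bound on $z_1', z_2'$ to estimate
$$\Bigl|\int_I \bigl(F'(z_1')-F'(z_2')\bigr)w'\,ds\Bigr|\le C(1+M_T^2)\|w'\|_2^2.$$
I then apply the interpolation inequality \eqref{interpol-eta} with small $\eta$ to absorb $\|w'\|_2^2$:
$$\|w'\|_2^2\le \eta\|w''\|_2^2+C_\eta\|w\|_2^2.$$
Choosing $\eta$ so that $C(1+M_T^2)\eta\le \tfrac12$ and using $b\ge b_{\min}>0$, I arrive at
$$\frac{d}{dt}\int_I b\,w^2\,ds+\int_I (w'')^2\,ds\le C'_T\int_I b\, w^2\,ds,\quad \text{a.e. }t\in(0,T).$$
Since $w(0)=0$, Gronwall's lemma yields $\int_I b\,w^2\,ds\equiv 0$ on $[0,T]$, hence $w\equiv 0$ on $[0,T]$. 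As $T>0$ was arbitrary, uniqueness on $[0,\infty)$ follows.

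The only mildly delicate point is ensuring that the pointwise boundary relation $w'''=F'(z_1')-F'(z_2')$ is available in the strong sense for use in the integration by parts; this follows by the same bootstrap as in Section~\ref{SecSS} (the equation $w''''=-b\partial_t w+(F'(z_1')-F'(z_2'))'$ belongs to $L^2_{loc}(0,T;H^{-1}(I))$ once $z_i\in X$, and the variational identity transfers the boundary condition in the distributional sense). The cubic nonlinearity is the other possible source of trouble, but thanks to the one-dimensional Sobolev embedding $H^2\hookrightarrow C^1$, all nonlinear terms are in fact Lipschitz on $X$-bounded sets, so no difficulty arises beyond the standard Gronwall closure.
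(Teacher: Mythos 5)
Your argument is correct and follows essentially the same route as the paper's proof: subtract the two equations, test with $w=z_1-z_2$, use the local Lipschitz bound $|F'(z_1')-F'(z_2')|\le C(T)|w'|$ obtained from the $H^2\hookrightarrow C^1$ embedding, absorb $\|w'\|_2^2$ via the interpolation inequality \eqref{interpol-eta}, and close with Gronwall using $w(0)=0$.

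One small remark on presentation rather than substance: you needn't worry about integration by parts or about the pointwise boundary relation $w'''=F'(z_1')-F'(z_2')$ being available "in the strong sense." The notion of solution in Definition~\ref{def-auxilf} is already variational, with the boundary conditions encoded by taking test functions in $H^2(I)$; subtracting the two weak identities and choosing $\psi=w$ immediately gives
$$\int_0^T\!\!\int_I \bigl(b\,w\,\partial_t w + (w'')^2 + (F'(z_1')-F'(z_2'))\,w'\bigr)\,ds\,dt=0,$$
with no boundary terms appearing at any stage. The "bootstrap" you sketch in the last paragraph is thus unnecessary — which is precisely why the paper works directly in the weak setting of Definition~\ref{def-auxilf}.
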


\begin{proof}
	Let $\tilde z_0,  \bar z_0\in X$ be two solutions (cf.~Definition~\ref{def-auxilf}). Then, for each $T>0$, subtracting the equations,
	using $|F'(\tilde z_0')-F'(\bar z_0')|\le C(T)|w'|$ and taking $\psi=w:=\tilde z_0-\bar z_0$, we see that $w$ satisfies
	$$		\int_0^t \int_I  b(s) w\dt w ds d\tau 
	\le C(T)\int_0^t \int_I (w')^2ds d\tau-\int_0^t \int_I (w'')^2ds d\tau,\quad 0<t<T.$$
	Using the interpolation inequality \eqref{interpol-eta} and $w(0)=0$,
	we obtain,
	for all $t\in(0,T)$,
	$$	
	\begin{aligned}
			 \int_I  b(s) & w^2(t) ds = 2\int_0^t \int_I  b(s) w\dt w ds d\tau
		\le C_1(T)\int_0^t \int_I w^2 ds d\tau\le C_2(T)\int_0^t \int_I  b(s)w^2 ds d\tau.
	\end{aligned}
	$$
	By Gronwall's Lemma, we deduce that $w\equiv 0$.
\end{proof}

\begin{proof}[Proof of first part of Theorem~\ref{thm-cvE}(i)]
	Let $\tilde z_0$, in the class \eqref{class.zz.weak}, be any cluster point of the family $\{z_\e,\ \e\in(0,\e_0]\}$ 
	for the notion of convergence in Lemma \ref{lem.eps.zero}.
	By that lemma, there exists at least one, and $\tilde z_0$ is a 
	global solution of \eqref{mainstrong1-0} with $b=\rhostar$.
	
	On the other hand, by \eqref{hyppbmloc10}, setting $a_0=(2\|\rho\|_\infty)^{-1}\mu_{\min}$, we get
		$$ \rhostar(s)=\int_0^\infty a\rho(s,a)da\ge a_0\int_{a_0}^\infty \rho(s,a)da
		\ge a_0\bigl(\mu_{\min}-a_0\|\rho\|_\infty\bigr)\ge \frac{\mu_{\min}^2}{4\|\rho\|_\infty}>0,
		\ \ s\in I.$$
		It follows from Lemma~\ref{lem.unique-weak} that   \eqref{mainstrong1-0} 
		with $b=\rhostar$ has at most one global weak solution,
		hence the cluster point is unique, which implies the desired convergence as $\e\to 0$.
\end{proof}

\section{Proof of Theorem~\ref{thmglob}{\rm (iv)} and end of proof of Theorem~\ref{thm-cvE}: 
	strong $H^2$ convergence and stablity of affine steady states with respect to $\e$}

\label{SecStab}

The energy associated with the solution $z_0$ of \eqref{mainstrong1-0} is given by
$$
E_0(t) :=\frac12\int_{I} (z_0''(t))^2 ds+ \int_{I} F(z_0'(t)) ds.
$$
We shall use the following properties (see~\ \ref{Secz0} for the proof).

	\begin{prop}\label{prop-pbm0bEn}
		Let the assumptions of Proposition~\ref{prop-pbm0} be in force.
		
		\vskip 1mm
		
		(i) For all $t_2>t_1\ge 0$, we have 
		$$
		E_0(t_2)-E_0(t_1)=-\int_{t_1}^{t_2} \int_I b(\partial_tz_0)^2dsdt \le 0.
		$$
		In particular, $E_0\in W^{1,p}_{loc}([0,\infty))$ for all $p\in(1,\infty)$ and 
		$$E'_0(t)=-\int_I b(\partial_tz_0)^2ds,\quad a.e.~t>0.$$
		
		(ii) For all $t>0$, we have $(z_0(t),b)=K_0:=(\phi,b)$.
	\end{prop}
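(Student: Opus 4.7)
Both parts are derived from the weak formulation of \eqref{mainstrong1-0} (Definition~\ref{def-auxilf}) by appropriate test function choices, exploiting the regularity in Proposition~\ref{prop-pbm0}(i). Part~(ii) is immediate: taking $v \equiv 1 \in H^2(I)$ gives $v' = v'' = 0$, so the weak formulation collapses to $\int_I b\, \partial_t z_0(t)\, ds = 0$ for a.e.~$t>0$; combined with $z_0 \in H^1_{loc}([0,\infty); L^2(I))$, the function $t\mapsto (z_0(t), b)$ is absolutely continuous with vanishing derivative, whence $(z_0(t), b) = (\phi, b) = K_0$ for all $t\ge 0$.

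For~(i), the plan is to pair the strong form of \eqref{mainstrong1-0} with $\partial_t z_0$ in $L^2(I)$. The regularity $z_0 \in L^2_{loc}(H^4) \cap H^1_{loc}(L^2)$ guarantees that $b\, \partial_t z_0 + z_0'''' - (F'(z_0'))' = 0$ holds in $L^2(I)$ for a.e.~$t$. Taking the $L^2$ inner product with $\partial_t z_0(t)$ and integrating on $(t_1,t_2)$ yields
\begin{equation*}
\int_{t_1}^{t_2} \int_I b(\partial_t z_0)^2 \, ds\, dt \;+\; \int_{t_1}^{t_2} \bigl(z_0''''(t) - (F'(z_0'(t)))',\, \partial_t z_0(t)\bigr)_{L^2(I)}\, dt \;=\; 0,
\end{equation*}
so the claim reduces to the chain rule identity
\begin{equation*}
E_0(t_2) - E_0(t_1) \;=\; \int_{t_1}^{t_2} \bigl(z_0''''(t) - (F'(z_0'(t)))',\, \partial_t z_0(t)\bigr)_{L^2(I)}\, dt.
\end{equation*}
Formally this is just integration by parts in space for the functional $\Phi(u) := \tfrac12 \|u''\|_2^2 + \int_I F(u')\, ds$: all boundary contributions in $(u''''-(F'(u'))', \partial_t u)_{L^2} = (u'',\partial_t u'')_{L^2} + (F'(u'),\partial_t u')_{L^2}$ cancel exactly thanks to the natural boundary conditions $z_0''|_{\partial I} = 0$ and $(z_0''' - F'(z_0'))|_{\partial I} = 0$.

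The main obstacle is that $\partial_t z_0(t) \in L^2(I)$ has no boundary trace, so this integration by parts cannot be performed directly on $z_0$. The plan is to regularize in time: setting $u_\eta := \rho_\eta \ast_t z_0$ after extending $z_0$ for $t < 0$ (e.g.~by $\phi$), the function $u_\eta$ is smooth in $t$, preserves the linear boundary condition $u_\eta''|_{\partial I} = 0$, and, crucially, satisfies the ``mollified nonlinear boundary condition'' $u_\eta'''|_{\partial I} = \rho_\eta \ast_t F'(z_0'|_{\partial I})$ exactly (since linear convolution in $t$ commutes with spatial trace and with the identity $z_0'''|_{\partial I} = F'(z_0'|_{\partial I})$). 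Pairing the mollified equation $b\,\partial_t u_\eta + u_\eta'''' = \rho_\eta \ast_t (F'(z_0'))'$ with $\partial_t u_\eta \in H^2(I)$ and integrating by parts in space, the boundary contributions cancel and one arrives at
\begin{equation*}
\int_{t_1}^{t_2} \int_I b (\partial_t u_\eta)^2 \, ds \, dt \;+\; \Phi(u_\eta(t_2)) - \Phi(u_\eta(t_1)) \;=\; R_\eta,
\end{equation*}
where $R_\eta$ is a commutator remainder between $\rho_\eta \ast_t$ and the nonlinearity $F'$. As $\eta \to 0$, $u_\eta \to z_0$ strongly in $L^2_{loc}(H^4)$ and $\partial_t u_\eta \to \partial_t z_0$ strongly in $L^2_{loc}(L^2)$, and the $C([0,\infty);H^2(I))$-continuity of $z_0$ forces $\Phi(u_\eta(t_i)) \to E_0(t_i)$. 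The technical crux is then to prove $R_\eta \to 0$ via a standard commutator/compactness argument based on the local Lipschitz character of $F'$ and on the uniform $L^\infty$-bound $z_0' \in L^\infty_{loc}(C(\overline I))$ coming from the embedding $H^2(I) \hookrightarrow C^1(\overline I)$. This yields the chain rule, and hence~(i).
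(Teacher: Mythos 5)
Part~(ii) of your proposal is correct and is essentially what the paper does (the paper dismisses it in one line: ``integrating the equation in space'').

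Part~(i) takes a genuinely different route. The paper uses Steklov-type averages: it integrates the weak formulation against the finite-difference quotient $\psi=h^{-1}(z_0(\tau+h)-z_0(\tau))$, sums the identity at times $\tau$ and $\tau+h$, rewrites the energy increment as a telescoping difference, and passes $h\to 0$. You instead mollify in time and work with the strong form. Both routes are morally equivalent and both face the same essential difficulty: the nonlinear term produces a remainder (your $R_\eta$, the paper's $J_2$) that is a product of something small times something that blows up like $\eta^{-1}$ (resp.~$h^{-1}$).

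Here is the concrete gap: you assert that $R_\eta\to 0$ follows from ``the local Lipschitz character of $F'$ and the uniform $L^\infty$-bound $z_0'\in L^\infty_{loc}(C(\overline I))$.'' These two facts are not enough. They give only that the commutator $\rho_\eta\ast_t F'(z_0')-F'(u'_\eta)$ is $O(1)$ in $L^\infty_tL^2_s$, and, since $\partial_t u'_\eta=\rho'_\eta\ast_t z_0'$ has $L^\infty_tL^2_s$-norm of size $\eta^{-1}$ in general, the product has no reason to vanish. What actually closes the argument is the \emph{time}-regularity of $z_0'$ beyond $1/2$-H\"older: from $z_0\in H^1_{loc}(0,T;L^2)\cap L^2_{loc}(0,T;H^4)$ one gets, by interpolation and Sobolev embedding, $z_0'\in C^\alpha_{loc}([0,T];L^2(I))$ for every $\alpha\in(0,3/4)$. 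This is precisely what the paper invokes to kill $J_2$. With this, the $L^\infty_tL^2_s$ norm of the commutator is $O(\eta^{\alpha})$ (using the pointwise Lipschitz bound on $F'$ over the uniform $L^\infty$-ball together with the H\"older modulus), while $\|\partial_t u'_\eta\|_{L^\infty_tL^2_s}=O(\eta^{\alpha-1})$ (since $\rho'_\eta\ast z_0'=\rho'_\eta\ast(z_0'-z_0'(t))$), so $R_\eta=O(\eta^{2\alpha-1})\to 0$ for $\alpha>1/2$. So the route is salvageable, but the regularity you cite is the wrong one, and without the $\alpha>1/2$ H\"older estimate the last step would fail. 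You should also be slightly careful near $t=0$: the mollification requires an extension of $z_0$ for $t<0$, and the extension by $\phi$ does not satisfy the equation, so one should first prove the identity for $0<t_1<t_2$ and then let $t_1\to 0^+$ using $E_0\in C([0,\infty))$, which follows from $z_0\in C([0,\infty);H^2)$.
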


Our next result, 
which requires the additional coercivity assumption \eqref{hyprhoa} on the kernel,
yields the strong $L^2_t(H^2_s)$ convergence statement in Theorem~\ref{thm-cvE}(i).
It also provides the energy convergence property which will be the key to the proof of Theorem~\ref{thm-cvE}(ii).
We recall that $E_\e$ and $u_\e$ are defined in \eqref{defusta}-\eqref{defEeps}.

\newcommand{\cM}{{\mathcal M}}

\begin{prop}\label{prop-cvE}
	 Assume \eqref{hyppbm1loc}--\eqref{hyppbm1locz0}.
	Then, for all $T>0$, we have
	\be{concl-prop-cvE1}
	\lim_{\e\to 0} z_\e=z_0\ \hbox{ strongly in $L^2(0,T;H^2(I))$}
	\ee
	and
	\be{concl-prop-cvE2}
	\lim_{\e\to 0} E_\e(t)=E_0(t),\quad\hbox{uniformly for $t>0$ bounded.}
	\ee
	Moreover, 
	one has
	\be{concl-prop-cvE3}
	\lim_{\e\to 0} E'_\e(t)\equiv\lim_{\e\to 0} \frac{1}{2\e^2} \int_I \intrp u^2_\e(s,a,t) \da \kernel (s,a) da ds=
	-\int_I  \mu_1(s) \left| \dt \zz (s,t)\right|^2ds \equiv E'_0(t),
	\ee
the convergence taking place weakly in $\cM(0,T)$, {\em i.e.} in the space of finite Radon measures on $(0,T)$.
\end{prop}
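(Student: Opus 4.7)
The plan is to prove the three assertions in order, relying on the weak convergences of Lemma~10.1 (notably strong convergence of $z_\e$ to $z_0$ in $C([0,T];C^1(\overline I))$), the dissipation identity of Proposition~\ref{thm.nrj}, and the Cauchy-Schwarz bound
$$|\mathcal L_\e[z_\e](s,t)|^2 \le Q(s)\Phi_\e(s,t), \quad Q(s):= \int_0^\infty \frac{\rho^2}{|\partial_a\rho|}\, da \in L^1(I),$$
where $\Phi_\e(s,t):=\e^{-2}\int_0^\infty u_\e^2|\partial_a\rho|\,da$ is the density of $-2E'_\e$; this uses the coercivity assumption \eqref{hyprhoa}. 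Combined with $\int_0^T\!\int_I\Phi_\e\, dsdt \le 2 E_\e(0^+)\le C\bar R$ from Proposition~\ref{thm.nrj}, one more application of Cauchy-Schwarz yields a uniform $L^1((0,T)\times I)$-bound on $\mathcal L_\e[z_\e]$.

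For the strong $L^2(0,T;H^2(I))$-convergence, I would set $w_\e := z_\e - z_0$, subtract the equations for $z_\e$ and $z_0$, and test with $w_\e$. Integrating in $t$ yields
$$\int_0^T \|w_\e''\|_2^2\, dt = -\int_0^T\!\int_I \bigl[(\mathcal L_\e[z_\e] - b\,\partial_t z_0)w_\e + (F'(z_\e') - F'(z_0'))w_\e'\bigr]\, ds\,dt.$$
The nonlinear term is bounded by $C\bar R\,\|w_\e'\|_{L^\infty((0,T)\times I)}^2 \to 0$ by the strong $C_tC^1$-convergence. The delay term is bounded by $\bigl(\|\mathcal L_\e[z_\e]\|_{L^1} + T^{1/2}\|b\,\partial_t z_0\|_{L^2}\bigr)\|w_\e\|_{L^\infty} \to 0$, using the uniform $L^1$-bound above and again the strong $C_tC^1$-convergence. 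Combined with the weak $L^2$-convergence of $z_\e''$, this upgrades to strong $L^2(0,T;H^2(I))$-convergence.

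For the energy convergence, decompose $E_\e(t) = M_\e(t) + \tfrac12\|z_\e''(t)\|_2^2 + \int_I F(z_\e'(t))\, ds$, with $M_\e := \tfrac{1}{2\e}\int_I\int_0^\infty u_\e^2 \rho\, da\, ds$. The second and third summands converge (respectively a.e.~in $t$ and uniformly in $t$) by the strong convergences already obtained. The principal task is to show $M_\e(t)\to 0$ uniformly on bounded subsets of $(0,\infty)$. Splitting at $a=t/\e$: the past contribution ($a > t/\e$) is handled by the uniform bound $\|z_\e\|_\infty \le C\bar R^{1/2}$ and the decay $\e^{-1}\|\bar\rho(\cdot,t/\e)\|_\infty \le t^{-1}\int_{t/\e}^\infty \|a\rho(\cdot,a)\|_\infty\, da \to 0$ (from \eqref{hyp10}); the non-past contribution ($a < t/\e$) is bounded, using the representation $u_\e = \e\int_0^a \partial_t z_\e(t-\e\sigma)\, d\sigma$ together with Cauchy-Schwarz and Fubini, by an expression of the form $\int_0^t \|\partial_t z_\e(\tau)\|_2^2\, \eta_\e(t-\tau)\, d\tau$ with $\eta_\e \to 0$ in a suitable sense by \eqref{hyp10}; its smallness follows by combining \eqref{globalboundz0stepGlobal} with the strong convergence from the preceding step. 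Pointwise convergence $E_\e(t)\to E_0(t)$ then upgrades to uniform convergence on bounded subsets of $(0,\infty)$ by monotonicity of $E_\e(\cdot)$ and continuity of $E_0$ (classical Dini-Polya argument).

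The measure convergence is a direct consequence of the preceding. By Propositions~\ref{thm.nrj} and \ref{prop-pbm0bEn}, $-E'_\e\, dt$ and $-E'_0\, dt$ are nonnegative absolutely continuous measures on $(0,T)$, uniformly bounded in total variation by $E_\e(0^+)\le C\bar R$. The energy convergence gives
$$\int_{t_1}^{t_2}(-E'_\e)(\tau)\, d\tau = E_\e(t_1) - E_\e(t_2) \longrightarrow E_0(t_1) - E_0(t_2) = \int_{t_1}^{t_2}(-E'_0)(\tau)\, d\tau$$
for all $0<t_1<t_2<T$, so the cumulative distributions converge pointwise; a standard density argument (approximating $\psi\in C_c((0,T))$ by step functions) yields weak convergence in $\mathcal M(0,T)$. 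The main obstacle throughout is the delicate vanishing of the memory term $M_\e(t)$ under the limited integrability of the past data and of the kernel tail, for which the assumptions \eqref{hyprhoa} and \eqref{hyp10} are essential.
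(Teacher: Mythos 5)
Your plan for the strong $L^2(0,T;H^2)$ convergence is sound, and it differs from the paper's: you obtain it directly by subtracting the two weak formulations, testing with $w_\e=z_\e-z_0$, and using the $L^1((0,T)\times I)$ bound on $\cL_\e[z_\e]$ derived from \eqref{hyprhoa} together with the strong $C_tC^1$ convergence from Lemma~\ref{lem.eps.zero}, whereas the paper instead derives uniform $L^2_tH^3_s$ and $H^{1/6}_tH^{5/2}_s$ bounds (Lemma~\ref{lem.compact0}) and passes to the limit by compactness plus uniqueness of the limit problem. Your testing argument is arguably more elementary; the paper's compactness lemma has the side benefit of being reusable in the appendix proof of Proposition~\ref{prop-pbm0}(ii). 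Your treatment of \eqref{concl-prop-cvE3}, deriving measure convergence from pointwise convergence of the cumulative distributions and a density/approximation step, is also a valid (and essentially equivalent) variant of the paper's argument via $\D'$-convergence and weak-* compactness.

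There is, however, a genuine gap in your treatment of \eqref{concl-prop-cvE2}. You claim that $M_\e(t)=\tfrac12 G_\e(t)$ tends to $0$ uniformly (or at least pointwise) on bounded subsets of $(0,\infty)$. After the representation $u_\e=\e\int_0^a\partial_t z_\e(t-\e\sigma)\,d\sigma$, Cauchy--Schwarz and Fubini, the non-past contribution is bounded by $\int_0^t\|\partial_t z_\e(\tau)\|_2^2\left(\int_{(t-\tau)/\e}^{t/\e}a\|\rho(\cdot,a)\|_\infty\,da\right)d\tau$. Splitting at $\tau=t-\sqrt{\e}$, the part $\tau<t-\sqrt{\e}$ indeed vanishes through \eqref{hyp10} and \eqref{globalboundz0stepGlobal}, but the tail $\int_{t-\sqrt{\e}}^{t}\|\partial_t z_\e(\tau)\|_2^2\,d\tau$ is only bounded, not small: the family $\{\partial_t z_\e\}$ converges merely weakly in $L^2_{t,s}$, and a uniform $L^2$ bound gives no equi-integrability, so the mass of $\|\partial_t z_\e\|_2^2$ could in principle concentrate near $\tau=t$ as $\e\to 0$. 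Invoking \eqref{globalboundz0stepGlobal} ``combined with the strong convergence from the preceding step'' does not resolve this, because the $L^2_tH^2_s$ convergence of $z_\e$ carries no strong $L^2$ information about $\partial_t z_\e$. The paper avoids this obstacle entirely: Lemma~\ref{lem.compact1} proves only that $\int_T^{T+1}G_\e\to 0$ (the time average in $t$ produces the missing $\sqrt\e$ factor via Fubini), and then \eqref{concl-prop-cvE2} is recovered by first showing $\int_{t_1}^{t_2}E_\e\to\int_{t_1}^{t_2}E_0$ and then exploiting the monotonicity of $E_\e$, $E_0$ and the continuity of $E_0$ to sandwich $E_\e(t)$ between $E_0(t_2)$ and $E_0(t_1)$. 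Your Dini--Pólya step is in the right spirit, but it must be applied to the time-averaged convergence, not to a pointwise convergence of $M_\e$ which you have not (and cannot, by this route) establish.
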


\begin{rem}\label{rmk.10.1}
	(i) 
	The equality in \eqref{concl-prop-cvE3} can be explained heuristically as follows:
	when $\e \to 0$, one has formally that
	$$
	\frac{1}{2\e^2} \intrp \da \rho(s,a) \veps^2(s,a,t) da \to \frac{1}{2} \intrp \da \rho(s,a) u_0^2(s,a,t) da,
	\quad\hbox{where $u_0 (s,a,t):= a \dt \zz (s,t)$.}
	$$
	 Then the identity 
	$\partial_a  (a^2\rho) = 2a \rho +  a^2\da \rho$,
	because the integral of the left hand side vanishes, implies that $\mu_1(s)\equiv \intrp a \rho(s,a) da = - \frac12\intrp \da \rho (s,a) a^2 da$, 
	giving  the desired limit. 
	
	\smallskip
	
	(ii) With some additional work, it can be shown that the same result holds in the finite dimensional case \cite{MiOel.2}, as well as for the linear 
	delayed heat equation \cite{MiOel.4} and also in the case of delayed harmonic maps \cite{Mi.5}.
\end{rem}

The proof is based on the following two lemmas.
The first one is a higher order a priori estimate,
uniform for $\e>0$:

\begin{lem}\label{lem.compact0}
	Assume \eqref{hyppbm1loc}--\eqref{hyppbm1locz0}.
	Then, for each $T>0$, we have
	\be{concl-compact00}
	\sup_{\e\in(0,\e_0]} \|z_\e\|_{L^2(0,T;H^3(I))}<\infty
	\ee
	and
	\be{concl-compact0}
	\sup_{\e\in(0,\e_0]} \|z_\e\|_{H^{1/6}(0,T;H^{5/2}(I))}<\infty.
	\ee
	In particular, the family $\{z_\e,\ \e\in(0,\e_0]\}$ is precompact in $L^2(0,T;H^2(I))$.
\end{lem}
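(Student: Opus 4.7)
The plan is to use equation \eqref{mainstrong1}, rewritten as $z_\e''''=(F'(z_\e'))'-\cL_\e z_\e$, to control $z_\e''''$ in $L^2(0,T;L^2(I))$ uniformly in $\e$. The nonlinear term $(F'(z_\e'))'=F''(z_\e')z_\e''$ is readily bounded by $C(1+\|z_\e'\|_\infty^2)\|z_\e''\|_2$, uniformly by \eqref{claimboundH2}--\eqref{globalboundz1}. The crux is therefore the uniform $L^2_tL^2_s$ control of $\cL_\e z_\e$, which is delicate since $\cL_\e$ is nominally of order $\e^{-1}$.

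The key device will be to rewrite $\cL_\e z_\e$ in terms of $\partial_t z_\e$. Writing $z_\e(s,t)-z_\e(s,t-\e a)=\int_{t-\e a}^t \partial_t z_\e(s,\tau)\,d\tau$, where $\partial_t z_\e$ is extended to $(-\infty,0)$ by $\partial_t z_p$ (which is legitimate thanks to \eqref{hyppbm1locz0} and \eqref{zLip0}), a change of variables and Fubini yield the identity
$$\cL_\e z_\e(s,t)=\int_0^\infty \partial_t z_\e(s,t-\e b)\,\rhobar(s,b)\,db.$$
Minkowski's integral inequality in $s$ dominates $\|\cL_\e z_\e(\cdot,t)\|_{L^2_s}$ by $\int_0^\infty \|\partial_t z_\e(\cdot,t-\e b)\|_{L^2}\|\rhobar(\cdot,b)\|_\infty\,db$. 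After substituting $\tau=t-\e b$, this takes the form of a convolution whose $L^1$-kernel has total mass $\int_0^\infty\|\rhobar(\cdot,b)\|_\infty\,db\le\int_0^\infty a\|\rho(\cdot,a)\|_\infty\,da<\infty$ by the second part of \eqref{hyp10}. Splitting the extension of $\tau\mapsto\|\partial_t z_\e(\cdot,\tau)\|_{L^2}$ into its past part (supported on $(-\infty,0)$, bounded in $L^\infty$ by $C\hat R$ via \eqref{hyppbm1locz0}) and its present part (supported on $(0,\infty)$, bounded in $L^2$ by \eqref{globalboundz0stepGlobal}), and applying Young's convolution inequality to each piece, produces the desired estimate $\|\cL_\e z_\e\|_{L^2(0,T;L^2(I))}\le C(T,\bar R,\hat R)$ uniformly in $\e\in(0,\e_0]$.

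Combining this with the control of the nonlinearity and the uniform $L^\infty(0,T;H^2)$ bound \eqref{claimboundH2} produces in fact a uniform $L^2(0,T;H^4(I))$ bound on $\{z_\e\}$, which immediately yields \eqref{concl-compact00}. For the time regularity \eqref{concl-compact0}, I would use \eqref{globalboundz0stepGlobal} once more to derive a uniform $H^1(0,T;L^2(I))$ bound, and then interpolate between $L^2(0,T;H^4(I))$ and $H^1(0,T;L^2(I))$ at parameter $\theta=1/6$ (via standard Hilbertian interpolation, which gives $H^\theta(0,T;H^{4(1-\theta)}(I))$): this places $\{z_\e\}$ uniformly in $H^{1/6}(0,T;H^{10/3}(I))\hookrightarrow H^{1/6}(0,T;H^{5/2}(I))$. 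Precompactness in $L^2(0,T;H^2(I))$ then follows from the Aubin--Lions compactness lemma applied to the triple $H^4(I)\hookrightarrow\hookrightarrow H^2(I)\hookrightarrow L^2(I)$, using the uniform bounds already obtained in $L^2(0,T;H^4)$ and $H^1(0,T;L^2)$.

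The main obstacle is the $\cL_\e z_\e$-estimate of the second paragraph: the representation via $\rhobar$ is precisely the device that converts the apparent $O(\e^{-1})$ behavior of $\cL_\e$ into a convolution structure whose amplitude is controlled by the uniform dissipation estimate \eqref{globalboundz0stepGlobal}, effectively rendering the formal limit $\cL_\e z_\e\to\rhostar\,\partial_t z_0$ quantitative at the level of $L^2_tL^2_s$-norms.
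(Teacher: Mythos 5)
Your convolution representation of $\cL_\e z_\e$ takes a genuinely different route from the paper's. The paper multiplies \eqref{mainstrong1} by $-z''_\e$, integrates by parts, and reduces matters to an $L^2_t L^1_s$ bound on $\cL_\e z_\e$; that bound is obtained via Cauchy--Schwarz against the dissipation $\int u_\e^2|\partial_a\rho|$ and is precisely where the coercivity hypothesis \eqref{hyprhoa} enters, through the factor $\int_I\int_0^\infty\rho^2|\partial_a\rho|^{-1}\,da\,ds$. Your device instead recasts $\cL_\e z_\e$ as a temporal convolution of $\partial_t z_\e$ against $\e^{-1}\rhobar(s,\cdot/\e)$, whose $L^1_t$-mass is $\int_0^\infty\|\rhobar(\cdot,b)\|_\infty\,db\le\int_0^\infty a\|\rho(\cdot,a)\|_\infty\,da$ and is controlled by \eqref{hyp10}; Young's inequality combined with \eqref{globalboundz0stepGlobal} then gives a uniform $L^2_tL^2_s$ bound on $\cL_\e z_\e$ without \eqref{hyprhoa}, and in fact produces a uniform $L^2(0,T;H^4)$ bound rather than the paper's $L^2(0,T;H^3)$.

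There is however a genuine gap in your representation step: the identity $z_\e(s,t)-z_\e(s,t-\e a)=\int_{t-\e a}^t\partial_t z_\e\,d\tau$, with $\partial_t z_\e$ extended by $\partial_t z_p$ on $(-\infty,0)$, fails whenever $t-\e a<0<t$, because $z_\e$ generally has a jump discontinuity at $t=0$ (Remark~\ref{rem-mainth}(iii), Proposition~\ref{lem.discont}): $z_\e$ is not absolutely continuous across $t=0$, and \eqref{zLip0} only provides Lipschitz regularity on $[0,T^*)$, not on $\R$. The corrected identity carries an extra term,
\begin{equation*}
\cL_\e z_\e(s,t)=\int_0^\infty \partial_t z_\e(s,t-\e b)\,\rhobar(s,b)\,db
+\frac{1}{\e}\bigl[z_\e(s,0^+)-z_p(s,0)\bigr]\,\rhobar\!\left(s,\tfrac{t}{\e}\right),
\end{equation*}
which cannot simply be dropped in view of the factor $\e^{-1}$. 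Fortunately it is harmless: by Proposition~\ref{lem.discont}, $\|z_\e(0^+)-z_p(0)\|_2\le C(R+\hat R)\e^{1/2}$, and since $\|\rhobar(\cdot,b)\|_\infty\in L^1(0,\infty)\cap L^\infty(0,\infty)\subset L^2(0,\infty)$ (the $L^1$ part again by \eqref{hyp10}), the change of variable $b=t/\e$ gives
\begin{equation*}
\int_0^T\Bigl\|\tfrac{1}{\e}\bigl[z_\e(0^+)-z_p(0)\bigr]\rhobar\!\left(\cdot,\tfrac{t}{\e}\right)\Bigr\|_2^2\,dt
\le C(R+\hat R)^2\int_0^\infty\|\rhobar(\cdot,b)\|_\infty^2\,db<\infty,
\end{equation*}
uniformly in $\e$. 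Once you add this term and estimate it as above, your argument closes, and the interpolation and Aubin--Lions steps you indicate are standard and match the paper's conclusion; the completed proof would in fact be slightly sharper than the paper's (no \eqref{hyprhoa}, and $H^4$ in place of $H^3$).
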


Our second lemma shows that the first term in the energy $E_\e$ vanishes as $\e\to 0$.

\begin{lem}\label{lem.compact1}
	Assume \eqref{hyppbm1loc}--\eqref{hyp10}, \eqref{hyppbm1locz0} and let
	$$G_\e(t):=\e^{-1}\int_I\int_0^\infty u_\e^2(t,s,a)\rho(s,a) dads.$$
	Then we have
	\be{res.compact1}
	\int_T^{T+1}G_\e(t) dt\le C\bar R\bigl(\eta(\e)+\sqrt{\e}\bigl(1+T^{-1/2}\bigr)\bigr),\quad\hbox{for all } T>0,
	\ee
	where $\eta(\e):=\int_{1/\sqrt{\e}}^\infty a\|\rho(\cdot,a)\|_\infty da\to 0$, as $\e\to 0$.
\end{lem}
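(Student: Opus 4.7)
The plan is to decompose $u_\e(s,t,a)=z_\e(s,t)-z_\e(s,t-\e a)$ via the fundamental theorem of calculus in time, carefully accounting for the possible jump of $z_\e$ at $t=0$, and then to apply Cauchy--Schwarz combined with an appropriate splitting of the $a$-integral. More precisely, first I would write, for $t>0$ and $a>0$,
\begin{equation*}
u_\e(s,t,a)=\int_{(t-\e a)_+}^{t}\partial_\sigma z_\e(s,\sigma)\,d\sigma
+\mathbf{1}_{\{a>t/\e\}}\Bigl\{[z_\e(0^+)-z_p(0)](s)+\int_{t-\e a}^{0}\partial_\sigma z_p(s,\sigma)\,d\sigma\Bigr\},
\end{equation*}
and apply $(A+B+C)^2\le 3(A^2+B^2+C^2)$ together with Cauchy--Schwarz on each integral, which yields a pointwise bound
\begin{equation*}
u_\e^2 \le 3\,\e a\!\int_{(t-\e a)_+}^{t}\!|\partial_\sigma z_\e|^2\,d\sigma
+3\,\mathbf{1}_{\{a>t/\e\}}\bigl[(z_\e(0^+)-z_p(0))^2+(\e a-t)^2\hat R^{\,2}\bigr].
\end{equation*}

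Next I would split the $a$-integral at the threshold $a_*=1/\sqrt\e$ and treat the three resulting contributions separately. For the derivative piece, Fubini gives that for fixed $a$ the set $\{t\in(T,T+1):\sigma\in((t-\e a)_+,t)\}$ has measure at most $\min(\e a,1)$. On $\{a\le a_*\}$ one has $\e a\cdot a\le\sqrt\e\cdot a$, so together with $\int_0^\infty a\|\rho(\cdot,a)\|_\infty\,da<\infty$ from \eqref{hyp10} and the energy bound \eqref{globalboundz0stepGlobal} this contribution is $\le C\bar R\sqrt\e$. On $\{a>a_*\}$ one uses $\min(\e a,1)\le 1$, and the remaining tail integrates to $\eta(\e)$, giving $\le C\bar R\eta(\e)$.

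For the jump contribution I would use Proposition~\ref{lem.discont} in the form $\|z_\e(0^+)-z_p(0)\|_2^2\le C\bar R\,\e$, together with the kernel tail estimate $\|\rhobar(\cdot,r)\|_\infty\le (1/r)\int_r^\infty a\|\rho(\cdot,a)\|_\infty\,da\le C/r$ (a direct consequence of \eqref{hyp10}). After the change of variables $t=\e r$ this yields a contribution $\le C\bar R\,\e\int_{T/\e}^{(T+1)/\e}\|\rhobar(\cdot,r)\|_\infty\,dr\le C\bar R\,\e\log(1+1/T)$, and the elementary inequality $\sqrt T\log(1+1/T)\le C$ on $(0,1]$ then dominates this by $C\bar R\sqrt\e(1+T^{-1/2})$. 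The past-data piece is handled along the same lines, using the geometric-mean bound $|z_p(0)-z_p(t-\e a)|^2\le 2R\hat R(\e a-t)$ together with the integration-by-parts identity $\int_{t/\e}^\infty(a-t/\e)\rho\,da=\int_{t/\e}^\infty\rhobar\,da$ (valid because $a\rhobar(s,a)\to 0$ by \eqref{hyp10}), so that the estimate $\int_I\int_{T/\e}^\infty\rhobar\,da\,ds\le L\int_{T/\e}^\infty b\|\rho(\cdot,b)\|_\infty\,db$ combines with $R\hat R\le\bar R$ to absorb into the claimed bound. Summing the three contributions yields \eqref{res.compact1}.

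The main obstacle is producing the precise $T^{-1/2}$ factor while simultaneously obtaining the sharp $\eta(\e)+\sqrt\e$ scales: this requires combining the $\sqrt\e$ jump estimate with the kernel decay $\|\rhobar(\cdot,r)\|_\infty\le C/r$ and the identity $\sqrt T\log(1+1/T)\le C$ on $(0,1]$, and on the derivative side, balancing the two regions $a\lessgtr 1/\sqrt\e$ so that the small-$a$ factor $\e a\cdot a$ is controlled by $\sqrt\e\cdot a\|\rho\|_\infty$ and the large-$a$ contribution extracts exactly the tail $\eta(\e)$ via $\min(\e a,1)\le 1$.
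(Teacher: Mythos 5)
Your argument is correct and arrives at the stated estimate, but you treat the tail $\{a>t/\e\}$ by a genuinely different route from the paper. The paper's proof splits $G_\e = G_\e^{(1)}+G_\e^{(2)}$ exactly at $a=t/\e$ and, on the tail, simply bounds $u_\e^2\le 4\|z_\e\|_\infty^2\le C\bar R$ via \eqref{claimbound}, then uses the $3/2$-moment in the first part of \eqref{hyp10} to get $G_\e^{(2)}(t)\le C\bar R\e^{1/2}t^{-3/2}$, from which $\int_T^{T+1}t^{-3/2}dt\le 2T^{-1/2}$ produces the $T^{-1/2}$ factor directly; this never invokes Proposition~\ref{lem.discont}. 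You instead run the fundamental theorem of calculus across the jump at $t=0$, so the tail splits into a jump term (controlled by Proposition~\ref{lem.discont}) and a past-data Lipschitz term, and you rely on the first-moment bound from the second part of \eqref{hyp10} through $\|\rhobar(\cdot,r)\|_\infty\le Cr^{-1}$. Both approaches are sound, and your Fubini treatment of the derivative piece with the threshold $a_*=1/\sqrt\e$ is essentially the paper's split at $\sigma=t-\sqrt\e$ in disguise; the paper's tail estimate is, however, shorter and uses fewer ingredients. One wrinkle to fix: the displayed pointwise bound carries the term $(\e a-t)^2\hat R^2$, whose integral against $\e^{-1}\rho(s,a)$ over $\{a>t/\e\}$ would require a second moment of $\rho$, which is not assumed; you correctly replace it in the following paragraph by the interpolated bound $|z_p(0)-z_p(t-\e a)|^2\le CR\hat R\,(\e a-t)$, but the displayed inequality should be stated that way from the start.
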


\begin{proof}[Proof of Lemma~\ref{lem.compact0}]
	 Recalling \eqref{regulhigher}, multiplying  \eqref{mainstrong1} with $-z''$, integrating by parts, using \eqref{globalboundz1} and the Sobolev inequality $\|z''\|_\infty\le C(\|z''\|_2+\|z'''\|_2)$, we get, for all $t>0$,
	$$\begin{aligned}
		\|z'''\|_2^2
		&=-\int_I (F'(z'))'z''+\int_I (\mathcal{L}_\e z)z''\le C\int_I(1+(z')^2)(z'')^2 + \|\mathcal{L}_\e z\|_1\|z''\|_\infty\\
		&\le C\bar R^{5/3} + C\|\mathcal{L}_\e z\|_1(\|z'''\|_2+\bar R^{1/2})
		\le C\bar R^{5/3} + \frac12\|z'''\|_2^2+C\bar R+C\|\mathcal{L}_\e z\|_1^2,
	\end{aligned}$$
	hence
	\be{res.compact3}
	\|z'''\|_2^2\le C\bar R^{5/3} +C\|\mathcal{L}_\e z\|_1^2.
	\ee
	On the other hand, by Cauchy-Schwarz, we have
	$$\begin{aligned}
		\|\mathcal{L}_\e z\|_1^2
		&\le \e^{-2}\left(\int_I\int_0^\infty u(s,t,a)\rho(s,a) dads\right)^2= \e^{-2}\left(\int_I\int_0^\infty \bigl(u(s,t,a)|\rho_a(s,a)|^{1/2})\rho(s,a)|\rho_a(s,a)|^{-1/2}dads\right)^2\\
		&\le \e^{-2}\int_I\int_0^\infty u^2(t,s,a)|\rho_a(s,a)| dads \int_I\int_0^\infty \rho^2(s,a)|\rho_a(s,a)|^{-1} dads
	\end{aligned}$$
	(where we defined $\rho(s,a)|\rho_a(s,a)|^{-1/2}:=0$ if $\rho(s,a)=\rho_a(s,a)=0$).
	Consequently, by~\eqref{hyprhoa}, \eqref{eq.dissipation} and \eqref{Energy-init2},
	$$\int_0^T\|\mathcal{L}_\e z\|_1^2dt\le \e^{-2}\int_0^T \int_I\int_0^\infty u^2(t,s,a)|\rho_a(s,a)| dads\le C\bar R.$$
	From this, \eqref{globalboundz1} and \eqref{res.compact3}, we deduce \eqref{concl-compact00}.
	Since, on the other hand, for each $T>0$, 
	 \eqref{claimboundH2} implies
	$$\sup_{\e\in(0,\e_0]} \|z_\e\|_{H^1(0,T;L^2(I))}<\infty,$$
	it follows by interpolation with \eqref{concl-compact00} that $\sup_{\e\in(0,\e_0]} \|z_\e\|_{H^{1-\nu}(0,T;H^{3\nu}(I))}<\infty$
	 for each $\nu\in(0,1)$,
	hence \eqref{concl-compact0}.
\end{proof}

\begin{proof}[Proof of Lemma~\ref{lem.compact1}]
	We have
	$$\begin{aligned}
		G_\e(t)
		\le& \e^{-1}\int_I\int_0^{t/\e} \left(\int_{t-\e a}^t |z_t(s,\sigma)| d\sigma\right)^2\rho(s,a) dads  +\e^{-1}\int_I\int_{t/\e}^\infty u^2(t,s,a)\rho(s,a) dads
		\equiv G^{(1)}_\e(t)+G^{(2)}_\e(t).
	\end{aligned}$$
	To estimate $ G^{(1)}_\e(t)$, we write
	\be{estimG1a}
	\begin{aligned}
		G^{(1)}_\e(t)
		&\le\int_I\int_0^{t/\e}\left(\int_{t-\e a}^t |z_t(s,\sigma)|^2  d\sigma\right)a\rho(s,a) dads \le\int_0^{t/\e}\int_{t-\e a}^t \|z_t(\sigma)\|_2^2 a\|\rho(\cdot,a)\|_\infty d\sigma da \\
		&\le\int_0^t\|z_t(\sigma)\|_2^2 \left(\int_{(t-\sigma)/\e}^{t/\e} a\|\rho(\cdot,a)\|_\infty da\right)d\sigma \\
		&\le\int_0^{(t-\sqrt{\e})_+}\|z_t(\sigma)\|_2^2 \left(\int_{1/\sqrt{\e}}^\infty a\|\rho(\cdot,a)\|_\infty da\right)d\sigma
		+C\int_{(t-\sqrt{\e})_+}^t\|z_t(\sigma)\|_2^2d\sigma,
	\end{aligned}
	\ee
	where we used the second part of \eqref{hyp10} in the last inequality.
	To handle the last integral, we compute
	$$
	\int_T^{T+1}\int_{(t-\sqrt{\e})_+}^t \hspace{-0.2cm}\|z_t(\sigma)\|_2^2d\sigma dt 
	\le\int_0^{T+1} \|z_t(\sigma)\|_2^2 \biggl(\int_\sigma^{\sigma+\sqrt{\e}} dt\biggr)d\sigma\\
	\le \sqrt{\e}\int_0^{T+1} \|z_t(\sigma)\|_2^2d\sigma.
	$$
	Going back to \eqref{estimG1a} and using 
	 \eqref{globalboundz0stepGlobal},
	we get
	\be{estimG2a}
	\int_T^{T+1}G^{(1)}_\e(t)dt\le 
	C\bigl(\eta(\e)+\sqrt{\e}\bigr) \int_0^{T+1} \|z_t(\sigma)\|_2^2d\sigma\le C\bigl(\eta(\e)+\sqrt{\e}\bigr)\bar R.
	\ee
	To estimate $G^{(2)}_\e(t)$, we use \eqref{claimbound} and the first part of \eqref{hyp10} to write
	$$
	\begin{aligned}
		G^{(2)}_\e(t)& \le  C\bar R\e^{-1} \int_I\int_{t/\e}^\infty \rho(s,a) dads
		\le C\bar R\e^{1/2}t^{-3/2}\int_I\int_{t/\e}^\infty a^{3/2}\rho(s,a) dads \le C\bar R\e^{1/2}t^{-3/2}.
	\end{aligned}
	$$
	Integrating the latter for $t\in(T,T+1)$ and combining with \eqref{estimG2a}, we get \eqref{res.compact1}.
\end{proof}

\begin{proof}[Proof of Proposition~\ref{prop-cvE}]
	By Lemma~\ref{lem.compact0}, for any sequence $\e_j\to 0^+$, there exists a subsequence converging in $L^2_{loc}([0,\infty);H^2(I))$
	to some $\tilde z$, and we moreover have $\tilde z\in H^1_{loc}([0,\infty);$ $L^2(I))$ owing to Lemma \ref{lem.eps.zero}(i).
	By the proof of Lemma \ref{lem.eps.zero}(ii), $\tilde z$ is a weak solution of \eqref{mainstrong1-0}, hence $\tilde z=z_0$ 
	by  Lemma~\ref{lem.unique-weak}. This shows \eqref{concl-prop-cvE1}.
	
	Next write 
	$$E_\e(t)=\hat E_\e(t)+\frac12 G_\e(t),\quad\hbox{ with } \hat E_\e(t) :=\frac12\int_I (z_\e''(t))^2 ds+ \int_I F(z_\e'(t)) ds.$$
	By \eqref{concl-prop-cvE1} and Lemma~\ref{lem.compact1}, for any $t_2>t_1>0$ we have
	$$\lim_{\e\to 0} \int_{t_1}^{t_2}E_\e(\sigma) d\sigma=\lim_{\e\to 0} \int_{t_1}^{t_2}\hat E_\e(\sigma) d\sigma+
	\frac12\lim_{\e\to 0} \int_{t_1}^{t_2}G_\e(\sigma) d\sigma =\int_{t_1}^{t_2}E_0(\sigma) d\sigma.$$
	By the time monotonicity of $E_0$ and $E_\e$, dividing by $t_2-t_1$, it follows that
	\be{liminfsup}
	\liminf_{\e\to 0} E_\e(t_1) \ge E_0(t_2),\quad \limsup_{\e\to 0} E_\e(t_2) \le E_0(t_1).
	\ee
	On the other hand,  by \eqref{regul-0}, we have
	$E_0\in C([0,\infty))$.
	For fixed $t>0$, taking $t_1=t$ (resp., $t_2=t$) and letting $t_2\to t^+$ (resp., $t_1\to t^-$) in the first (resp., second) inequality of 
	\eqref{liminfsup}, we obtain 
	$$\liminf_{\e\to 0} E_\e(t) \ge E_0(t)\ge \limsup_{\e\to 0} E_\e(t),$$
	hence \eqref{concl-prop-cvE2} (the convergence being uniform for bounded $t$, owing to the monotonicity of $t\mapsto E_\e(t)$
	and Dini's theorem).
	
	Finally, by \eqref{eq.dissipation0} and {\eqref{eq.dissipation}, we have} $E'_\e \in L^1(0,T)$ 
	and  $\sup_{\e\in(0,\e_0)} \|E'_\e\|_{L^1(0,T)}<\infty$. 
		For any sequence $\e_i\to 0$, some subsequence of $E'_{\e_i}$ thus converges weakly 
		in the sense of measures to some limit $\mu$. On the other hand,  we know from \eqref{concl-prop-cvE2} that
	$E_\e$ converges to $E_0$ in $\D'((0,T))$  as $\e\to 0$. By uniqueness of 
	limits $\mu = E_0'$ and \eqref{concl-prop-cvE3} follows.
\end{proof}

\begin{proof}[Proof of Theorem~\ref{thm-cvE}(ii)] 
	First note that, since $\mathcal{S}$ is finite up to additive constants, we have
	\be{defeta1}
	\eta_1=\min\Bigl\{\tilde E(W);\ W\in\mathcal{S},\ |W'|\not\equiv 1\Bigr\}>0,
	\quad\hbox{where } \tilde E(W)=\frac12 \int_I (W'')^2 ds+\int_I F(W')ds.
	\ee
	Also, since the imbedding $H^2(I)\subset C^1(\overline I)$ is compact, we have
	\be{defeta2}
	\eta_2=\inf\Bigl\{\tilde E(W);\ W\in H^2(I),\ W'(0)=0\Bigr\}>0.
	\ee
	Set $\eta_0=\min(\eta_1,\eta_2)$.

	Assume that $Z'_0\equiv 1$ (the case $Z'_0\equiv -1$ is similar).
	Since $\lim_{t\to\infty} E_0(t)=\tilde E(Z_0)=0$ and  $\lim_{t\to\infty} \|z'_0(t)-1\|_\infty=0$, there exists $t_0>0$ such that 
	$E_0(t_0)<\eta_0/2$ and $z'_0(0,t_0)>1/2$.
	By the convergence property \eqref{concl-prop-cvE2} of the energy and the fact that $z_\e \to z_0$ in $C([0,t_0];C^1([0,L]))$ (cf.~Lemma \ref{lem.eps.zero}), 
	there exists $\bar\e_0\in(0,\e_0)$ such that, for all $\e\in(0,\bar\e_0)$, $E_\e(t_0)\le\eta_0/2$
	and $z'_\e(0,t_0)\ge 1/2$.
	In particular,
	$$\tilde E(Z_\e)\le E_\e(Z_\e)\le E_\e(t_0)\le \eta_0/2<\eta_1,$$
	hence $Z'_\e\equiv\pm 1$.
	Moreover, for any $\e\in(0,\bar\e_0]$, the case $Z'_\e\equiv -1$ cannot occur, since otherwise, by the continuity of 
	$t\mapsto z_\e(t)$ in $C^1(\bar I)$ 
	(cf.~Proposition~\ref{thmregtime}),
	there would exist $t_1>t_0$ such that $z'_\e(0,t_1)=0$, hence
	$$\eta_2\le\tilde E(z_\e(t_1))\le E_\e(t_1)\le E_\e(t_0)\le \eta_0/2:$$
	a contradiction. We have thus shown that, for all $\e\in(0,\bar\e_0]$, $Z'_\e\equiv 1$. The proof is complete.
\end{proof}

\begin{proof}[Proof of Theorem~\ref{thmglob}{\rm (iv)}]
	The argument is completely similar to that in the proof of Theorem~\ref{thm-cvE}(ii),
	but easier (and without requiring  assumption \eqref{hyprhoa}), 
	since we can just rely on the continuity property in Theorem~\ref{thmglob}{\rm (ii)} instead of Proposition~\ref{prop-cvE}).
	We therefore skip the details.
\end{proof}

\appendix

\def\appendixname{\hskip -1mm Appendix}

\section{Proof of Propositions~\ref{prop-pbm0} and \ref{prop-pbm0bEn}}

\label{Secz0}
\def\appendixname{\hskip -1mm}
	
	Consider the linear inhomogeneous problem
	\be{auxilu1ff}
	b\partial_tu+u''''=f'\ \hbox{in $Q_T$,\ with $u''=u'''-f=0$ on $\partial I$, \ $u(0)=u_0$,}
	\ee
	where $T>0$, $Q_T=(0,T)\times I$, $f\in L^2(0,T;H^1)$ and $u_0\in L^2$.
	
	\begin{defn} \label{def-auxilf}
		(i) A solution of \eqref{auxilu1ff} on $(0,T)$ is a function $u$ such that
		\begin{equation}\label{eq.zz0.weak}
			\left\{ 
			\begin{aligned}
				&u\in H^1(0,T;L^2)\cap L^2(0,T;H^2), \quad u(0)=u_0,\\
				&\int_0^T \int_I \bigl(b(s) \dt u\psi  + u''\psi'' + f\psi'\bigr)ds dt =0, \quad  
				\forall\psi \in L^2(0,T;H^2(I)).	
			\end{aligned}
			\right.
		\end{equation}
		Observe that the initial condition in \eqref{eq.zz0.weak} makes sense owing to $H^1(0,T;L^2)\subset C([0,T];L^2)$.
		\vskip 2pt
		
		(ii) A solution $z_0$ of \eqref{mainstrong1-0} on $(0,T)$ is a solution of \eqref{auxilu1ff} with $f=F'(z_0')$
		and $u_0=\phi$
		(note that $z_0\in L^2(0,T;H^2)$ implies $F'(z_0')\in L^2(0,T;H^1)$).
	\end{defn}

	\begin{rem} \label{rem-auxilf}
		(i) The definition \eqref{auxilu1ff} of (weak) solution is actually equivalent to that of $u$ being a strong solution,~i.e.
		\begin{equation}\label{eq.zz0.strong}
			\left\{ 
			\begin{aligned}
				&u\in H^1(0,T;L^2)\cap L^2(0,T;H^4), \quad u(0)=u_0,\\
				&b\partial_tu+u''''-f'=0 \hbox{ in $L^2(I)$ for a.e.~$t\in(0,T)$,}\\
				& u''=u'''-f=0 \hbox{ on $\partial I$  for a.e.~$t\in(0,T)$.}
			\end{aligned}
			\right.
			\ee
			Indeed, \eqref{eq.zz0.weak} implies $u''''=f' -b\partial_tu$ in the distributional sense in $(0,T)\times I$,
			hence $u\in L^2(0,T;H^4)$, and the other two conditions in \eqref{eq.zz0.strong} are satisfied owing to
			the identity
			$$\int_0^T\int_I \bigl(u''''-f'\bigr)\psi =\int_0^T\int_I \bigl(u''\psi'' + f\psi'\bigr)+\int_0^T[(u'''-f)\psi-u''\psi']_0^L,
			\ \ \psi \in L^2(0,T;H^2).$$
			Also, the converse implication readily follows from this identity.
			
			\smallskip
			
			(ii) Equation \eqref{auxilu1ff} has a most one solution
			(just subtract the equations for $u_1$ and $u_2$ and take $\psi=u_1-u_2$).
		\end{rem}

		We shall use the following linear result for \eqref{auxilu1ff}.
		In what follows, for $1<p<\infty$, we denote
		$$X_{p,T}:=W^{1,p}(0,T;L^2)\cap L^p(0,T;H^4),\quad
		X_{p,T,loc}:=W^{1,p}_{loc}((0,T];L^2)\cap L^p_{loc}((0,T];H^4)$$
and $B:L^2(0,T;H^1)\to L^2(0,T;\R^2)$ is the trace operator (namely, $(Bf)(t) = \{f(t,0),f(t,L)\}$ for a.e. $t \in (0,T)$).
		\begin{lem}\label{lemregulpq}
			Let $T>0$, $p>2$, $u_0\in H^2$, $f\in L^p(0,T;H^1)$ and assume that $Bf\in W^{\theta,p}(0,T;\R^2)$ with $\theta>1/8$.
			Then there exists a strong solution $u\in C([0,T];H^2)\cap X_{2,T}\cap X_{p,T,loc}$ of \eqref{auxilu1ff}
			such that $u(0)=u_0$. Moreover, for each $\eta\in(0,T)$, we have the estimate
			\be{EpTloc}
			\begin{aligned}
				\|\partial_tu\|_{L^p(\eta,T;L^2)}&+\|u''''\|_{L^p(\eta,T;L^2)}\le C(\eta,T)\bigl(\|f'\|_{L^p(0,T;L^2)}+\|B f\|_{W^{\theta,p}(0,T;\R^2)}
				+\|u\|_{L^p(0,T;L^2)}\bigr).
			\end{aligned}	
			\ee
			
		\end{lem}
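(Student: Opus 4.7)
The plan is to invoke the maximal $L^p$-regularity theorem of Denk--Hieber--Pr\"uss \cite{DHP} for inhomogeneous higher-order parabolic initial--boundary value problems. Viewed as a fourth-order parabolic equation with boundary operators of orders $0$ (the homogeneous condition $u''=0$) and $3$ (the inhomogeneous condition $u'''=Bf$), the problem fits that framework, which yields an isomorphism between the data $(f',Bf,u_0)$ and the strong solution $u$ in the appropriate spaces, provided that the boundary datum lies in $W^{1-3/4-1/(4p),p}(0,T;\R^2)=W^{1/4-1/(4p),p}(0,T;\R^2)$ and that $u_0$ belongs to the corresponding real-interpolation trace space (up to the boundary compatibility induced by the homogeneous boundary condition).

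First I would apply DHP at $p=2$. The required boundary exponent is $\kappa_2=1/8$, dominated by the hypothesis $Bf\in W^{\theta,p}(0,T;\R^2)\hookrightarrow W^{\theta,2}(0,T;\R^2)$ with $\theta>1/8$. The $p=2$ trace space reduces to $H^2$ (up to the homogeneous condition $u_0''=0$, which can be absorbed by subtracting a standard lifting of the initial data and treating the correction as an additional source term). Since $f'\in L^2(0,T;L^2)$ is automatic from $f\in L^p(0,T;H^1)$ with $p>2$, DHP furnishes a strong solution $u\in X_{2,T}$ together with the associated maximal-regularity estimate; the standard embedding $X_{2,T}\hookrightarrow C([0,T];H^2)$ yields the claimed continuity, and uniqueness follows from Remark~\ref{rem-auxilf}(ii).

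To upgrade the solution to $X_{p,T,loc}$, I would employ a cutoff/parabolic-smoothing argument. Fix $\eta\in(0,T)$ and pick $\chi\in C^\infty([0,T])$ with $\chi\equiv 0$ on $[0,\eta/2]$ and $\chi\equiv 1$ on $[\eta,T]$. Setting $v:=\chi u$, one checks that $v$ solves the same parabolic problem, but with vanishing initial condition, boundary datum $\chi\cdot Bf$, and source $(\chi f)'-\chi' f+\chi' b u$. Because $v(0)=0$, no initial compatibility at the stricter $L^p$ trace level is required. Each source term lies in $L^p(0,T;L^2)$: the first two directly from the hypothesis $f\in L^p(0,T;H^1)$, the third from the already obtained bound $u\in X_{2,T}\hookrightarrow L^\infty(0,T;H^2)$ combined with $b\in H^2\hookrightarrow L^\infty$. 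Multiplication by the smooth cutoff $\chi$ preserves the $W^{\theta,p}(0,T;\R^2)$ regularity of $Bf$, and the vanishing of $v$ at $t=0$ allows the threshold $\theta>1/8$ to suffice for DHP at the general $p$. This gives $v\in X_{p,T}$, hence $u\in X_{p,T,loc}$, and collecting the maximal-regularity estimate for $v$ with the previous $X_{2,T}$ bound on $u$ yields \eqref{EpTloc}.

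The main obstacle will be the careful checking of the DHP index matching, particularly the fact that $\theta>1/8$ is the sharp threshold accommodating the boundary datum both in the $p=2$ well-posedness step and, after cutoff (where the initial compatibility is trivial), in the $L^p$ upgrade; a subsidiary point is the lifting that removes the possible incompatibility of $u_0''|_{\partial I}$ with the homogeneous boundary condition. Once these points are settled, the rest of the argument is a straightforward bookkeeping of maximal-regularity norms.
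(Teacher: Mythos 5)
Your proposal is essentially correct in outcome but follows a genuinely different path from the paper's in the existence step. The paper does \emph{not} apply DHP at $p=2$ directly; instead it approximates the initial datum $u_0$ by a sequence of smooth functions $u_{0,n}$, applies DHP at the given exponent $p>2$ to each regularized problem (where the initial trace space is $B^{4(1-1/p)}_{2,p}(I)$, strictly smaller than $H^2$, so the regularization is genuinely needed), derives a Cauchy estimate for $(u_n)$ in $C([0,T];H^2)\cap H^1(0,T;L^2)$ via a hands-on energy computation involving difference quotients in time, and passes to the limit; the membership $u\in X_{2,T}$ is then recovered through Remark~\ref{rem-auxilf}(i). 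Your route is to invoke DHP directly at $p=q=2$, for which $B^{4(1-1/p)}_{2,p}(I)=H^2(I)$ is exactly the given initial regularity, obtaining $u\in X_{2,T}$ in one stroke and skipping both the approximation and the Cauchy argument. Both proofs then apply the identical cutoff-in-time device to obtain $X_{p,T,loc}$ and the estimate \eqref{EpTloc}. Your route is in fact somewhat more economical, and the ``lifting'' you mention to handle the compatibility $u_0''|_{\partial I}=0$ at $p=2$ is unnecessary: since $\kappa_2=3/8<1/2=1/p$ and $\kappa_3=1/8<1/p$ when $p=2$, DHP imposes no compatibility condition at all (indeed for $u_0\in H^2$ the trace $u_0''|_{\partial I}$ is not even defined), so this paragraph of your argument can be dropped.

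One small bookkeeping slip: for $v=\chi(t)u$ the correct source, keeping the equation in the form $b\partial_t v+v''''=(\chi f)'+h$ with boundary datum $\chi f$, is $h=b\,u\,\partial_t\chi$, not ``$(\chi f)'-\chi' f+\chi' b u$''; the extra $-\chi' f$ term does not belong (and with the intended meaning of $'$ as $\partial_s$ it would in any case be identically zero since $\chi$ depends on $t$ only). This does not affect the structure of the argument: $h\in L^p(0,T;L^2)$ follows from $u\in L^\infty(0,T;H^2)$ and $b\in L^\infty$, and $\|h\|_{L^p(0,T;L^2)}\le C\|u\|_{L^p(0,T;L^2)}$ as needed for \eqref{EpTloc}. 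With that correction, the rest of your bookkeeping of maximal-regularity norms closes the proof as claimed.
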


		\begin{proof}
			Set $a=1/b \in H^2(I)$.
			We approximate the initial data by a sequence of smooth functions $u_{0,n}\in C^\infty(\bar I)$ such that 
			$\lim_{n\to\infty}\|u_{0,n}-u_0\|_{H^2}=0$ and consider the problem
			\be{auxilu1n}
			\partial_tu_n+au_n''''=af',\quad \hbox{with $u_n''=u_n'''-f=0$ on $\partial I$ and $u_n(0)=u_{0,n}$.}
			\ee
			We claim that, by \cite[Theorem 2.3]{DHP}, \eqref{auxilu1n} admits a (unique) solution
			$u_n\in X_{p,T}$.
			Indeed we check the applicability of that theorem with $m=q=2$ 
			and $\kappa_j =\frac{7-2j}{8}$ for $j=2,3$.
			Since $f'\in L^p(0,T;L^2)$ and since no compatibility conditions for the initial and boundary data are required owing to 
			$\kappa_j\le 1/q$, the result applies provided the boundary trace $Bf$ satisfies
			$B f\in F^\kappa_{p,2}(0,T;\R^2)$ with $\kappa=\kappa_3=1/8$,
			where $F^\kappa_{p,q}$ denotes the Triebel-Lizorkin space.
			To this end it suffices to use the fact that 
			$$W^{\theta,p}(0,T;\R^2)=F^\theta_{p,p}(0,T;\R^2)\subset F^\kappa_{p,2}(0,T;\R^2),\quad \theta>\kappa>0,\ 1<p<\infty$$
			(see~\cite[Chapter~2]{Tr1} and, e.g., \cite[section~1]{Si} for the first part and \cite[Theorem 1.2]{MV} for the second part.
			These are stated there in the whole Euclidean space, but the case of a smooth domain -- here just the interval $(0,T)$ -- follows 			by a standard extension property; see \cite[Chapter~2]{Tr2}).

			Now, by interpolation, we have $u_n\in W^{1/2,p}(0,T;H^2)\subset C([0,T];H^2)$.
			We claim that 
			\be{auxilu1n2}
			\max_{t\in[0,T]}\|(u_n-u_k)(t)\|_{H^2}+\int_0^T\int_I |\partial_t (u_n-u_k)|^2dsdt \le C\|u_{0,n}-u_{0,k}\|_{H^2}.
			\ee
			Indeed, set $w=u_n-u_k$, which is the solution of $\partial_t w+aw''''=0$
			with $w''=w'''=0$ on $\partial I$ and $w(0)=u_{0,n}-u_{0,k}$. For $0<t<T$ and $0<h<\min(t,T-t)$, 
			applying \eqref{eq.zz0.weak} with $u(t)=w(t+h)+w(t)$, $f=0$ and $\psi=h^{-1}(w(\tau+h)-w(\tau))$, we obtain
			$$\begin{aligned}
				h^{-1}\int_t^{t+h}& \int_I (w'')^2-h^{-1}\int_0^h \int_I (w'')^2
				=h^{-1}\int_0^t\int_I (w''(\tau+h))^2-(w''(\tau))^2\\
				&=\int_0^t\int_I (w(\tau+h)+w(\tau))''\frac{(w(\tau+h)-w(\tau))''}{h}=-\int_0^t\int_I b(w(\tau+h)+w(\tau))_t\frac{w(\tau+h)-w(\tau)}{h}.
			\end{aligned}$$
			Letting $h\to 0$ and using $u_n\in C([0,T];H^2)\cap H^1(0,T;L^2)$, we obtain
			$$\int_I (w'')^2(t)+\int_0^t\int_I b |\partial_tw(t)|^2=\int_I (w'')^2(0).$$
			Since also
			$\int_I w^2(t)\le 2\int_I w^2(0)+2T\int_0^T\int_I |\partial_tw|^2$,
			we deduce \eqref{auxilu1n2}.
			
			It follows from \eqref{auxilu1n2} that $(u_n)$ is a Cauchy sequence, hence converges, in $C([0,T];$ $H^2)$ $\cap H^1(0,T;L^2)$.
			Passing to the limit in the integral identity \eqref{eq.zz0.weak} for $u_n$, we obtain a solution $u$ of \eqref{auxilu1ff} in that 				class.
			Now let $\varphi\in C^1([0,T])$, with $\varphi=0$ on $[0,\eta/2]$ and $\varphi=1$ on $[\eta,T]$.
			The function $v:=u\varphi$ satisfies 
			$\partial_tv+av''''=g=af'\varphi+u\partial_t\varphi$ with $v''=0$, $v'''=\tilde f\equiv f\varphi$ on $\partial I$ and $v(0)=0$.
			Since $u\in C([0,T];H^2)\subset L^\infty(Q_T)$, hence $g\in L^p(0,T;L^2)$,
			and $B \tilde f\in W^{\theta,p}(0,T;\R^2)$,
			it follows from \cite[Theorem 2.3]{DHP} and uniqueness of solutions (cf.~Remark~\ref{rem-auxilf}(ii))
			that $v\in X_{p,T}$, hence $u\in X_{p,T,loc}$, with
			$$\|\partial_tv\|_{L^p(0,T;L^2)}+\|v''''\|_{L^p(0,T;L^2)}\le C(T)\bigl(\|g\|_{L^p(0,T;L^2)}+\|B \tilde f\|_{W^{\theta,p}(0,T;\R^2)}\bigr),$$
			which yields \eqref{EpTloc}.
		\end{proof}
			
	We now turn to the proof of Proposition~\ref{prop-pbm0}(i).
	Although one might directly apply a fixed point argument on problem \eqref{mainstrong1-0},
	it will be convenient to prove existence by taking advantage of the convergence result 
	in Lemma \ref{lem.eps.zero} for problem \eqref{mainweak},
	applied to a suitably defined kernel $\rho$ and past data $z_p$.
	We stress that we did {\it not} use Proposition~\ref{prop-pbm0} in the proof of Lemma \ref{lem.eps.zero}, so there is no circular reasoning.

	\begin{proof}[Proof of Proposition~\ref{prop-pbm0}(i)]
		We define
		\be{defrhob}
		\rho(s,a)=e^{-a}b(s) \quad\hbox{ and }\quad z_p(s,t)=\phi(s) \ \hbox{ for all $t\in(-\infty,0]$.}
		\ee
		Then $\rho, z_p$ satisfy all the assumptions of Lemma \ref{lem.eps.zero}
		and $\rhostar(s)=\int_0^\infty ae^{-a}b(s)da=b(s)$.
		It follows that there exists a global solution 
		\be{mainstrong1-0reg}
		z_0  \in C([0,T];C^1(\overline I))\cap L^\infty(0,\infty;H^2(I)),
		\ee
		with
		\be{mainstrong1-0reg2}
		z_0\in H^1(0,T;L^2(I))\cap L^2(0,T;H^4(I)),\quad 0<T<\infty,
		\ee
		of 
		$$
		\left\{\begin{aligned}
			b(s)\partial_t z_0 + z_0'''' &= \left(F'(z_0')\right)' ,&\hbox{ in $Q_\infty$}, \\
			z_0'''=0,\ z_0'''&=F'(z_0'),&s\in\partial I,\ t>0,\\
			z_0(s,0)&=\phi(s),&s\in I, 
		\end{aligned}
		\right.
		$$
		obtained as limit as $\e\to 0$ of solutions $z_\e$ of problems \eqref{mainweak}.
		
		\smallskip
		
		The uniqueness part of Proposition~\ref{prop-pbm0}(i) is a consequence of Lemma~\ref{lem.unique-weak}.
		
		\smallskip
		We next prove the additional regularity properties of the solution.
		Let $T>0$, fix any $p\in(2,\infty)$ and set $f:=F'(z_0')$.
		By \eqref{mainstrong1-0reg}, interpolation and Sobolev embedding, we have $z_0\in H^{1-\nu}(0,T;H^{4\nu})$
		for all $\nu\in(0,1)$, hence $f\in L^\infty(0,\infty;H^1)$, as well as $Bf\in H^k(0,T;\R^2)$ for all $k\in(0,5/8)$,
		with $\sup_{t\ge 1} \|Bf\|_{H^k(t,t+1;\R^2)}<\infty$.
		Therefore, by Sobolev embedding, there exists $\theta_p>1/8$ such that $Bf\in W^{\theta_p,p}(0,T;\R^2)$,
		with moreover
		\be{regul-01aa}
		\sup_{t\ge 1} \ \|Bf\|_{W^{\theta_p,p}(t,t+1;\R^2)}<\infty.
		\ee
		By Lemma~\ref{lemregulpq}, there exists a solution $u\in C([0,T];H^2)\cap X_{2,T}\cap X_{p,T,loc}$ of 
		\eqref{auxilu1ff}. By uniqueness of solutions (cf.~Remark~\ref{rem-auxilf}(ii)), we deduce that $u=z_0$. 
		Moreover, owing to estimate \eqref{EpTloc} in Lemma~\ref{lemregulpq} and the fact that
		$z_0\in L^\infty(0,\infty;H^2(I))$, we deduce \eqref{regul-01}.
	\end{proof}
	
	We next give the proof of the energy identity for problem \eqref{mainstrong1-0}.
	Although, at a formal level, the identity would readily follow by multiplying by $\partial_t z_0$ and integrating by parts,
	more care is needed in view of the available regularity of $z_0$.

	\begin{proof}[Proof of Proposition~\ref{prop-pbm0bEn}]
		(i) Let $t_2>t_1\ge 0$ and $0<h<t_2-t_1$. We write
		$$\begin{aligned}
			&2h^{-1}\int_{t_2}^{t_2+h}E_0(t)dt-2h^{-1}\int_{t_1}^{t_1+h}E_0(t)dt=h^{-1}\int_{t_2}^{t_2+h}\int_I ((z_0'')^2+2F(z_0'))-h^{-1}\int_{t_1}^{t_1+h}\int_I((z_0'')^2+2F(z_0'))\\
			&=h^{-1}\int_{t_1}^{t_2} \int_I \bigl[(z_0''(\tau+h))^2+2F(z_0'(\tau+h))\bigr]-
			\bigl[(z_0''(\tau))^2+2F(z_0'(\tau))\bigr]\\
			&=\int_{t_1}^{t_2} \int_I(z_0(\tau+h)+z_0(\tau))''\frac{(z_0(\tau+h)-z_0(\tau))''}{h}
			+2\frac{F(z_0'(\tau+h))-F(z_0'(\tau))}{h}.
		\end{aligned}$$
		On the other hand, by the weak formulation of \eqref{mainstrong1-0} (cf.~Definition~\ref{def-auxilf}), for all $\psi \in L^\infty(t_1,t_2;H^2(I))$, we have
		$$
		\begin{aligned}
			\int_{t_1}^{t_2} \int_I&\left(z_0(\tau+h)+z_0(\tau)\right)''\psi''
			+\bigl(F'(z_0'(\tau+h))+F'(z_0'(\tau))\bigr)\psi' + b\left(\partial_t z_0(\tau+h)+\partial_t z_0(\tau)\right)\psi =0.
		\end{aligned}
		$$
		Taking $\psi=h^{-1}(z_0(\tau+h)-z_0(\tau))$, we get
		$$\begin{aligned}
			&2h^{-1}\int_{t_2}^{t_2+h}E_0(t)dt-2h^{-1}\int_{t_1}^{t_1+h}E_0(t)dt=-\int_{t_1}^{t_2} \int_Ib\bigl(\partial_t z_0(\tau+h)+\partial_t z_0(\tau)\bigr) \frac{z_0(\tau+h)-z_0(\tau)}{h}\\
			& 
			+\int_{t_1}^{t_2}\int\Bigl[2\frac{F(z_0'(\tau+h))-F(z_0'(\tau))}{z_0'(\tau+h)-z_0'(\tau)}-F'(z_0'(\tau+h))-F'(z_0'(\tau))\Bigr]\frac{z_0'(\tau+h)-z_0'(\tau)}{h}\equiv J_1+J_2.
		\end{aligned}$$
		Since $z_0\in H^1(0,T;L^2)$, we have $\lim_{h\to 0}J_1=-2\int_{t_1}^{t_2} \int_Ib(\partial_tz_0)^2dsdt$.
		On the other hand, letting $M:= \sup_{Q_T}|z'_0|<\infty$ and using that
		$$\Bigl|\frac{F(X)-F(Y)}{X-Y}-F'(X)\Bigr|\le c|X-Y|,\quad |X|, |Y|\le M,$$
		for some constant $c=c(M,F)>0$, we get
		$$|J_2|\le 2c\int_{t_1}^{t_2}\int_I\frac{|z'_0(\tau+h)-z''_0(\tau)|^2}{h}\le 2cT\,\biggl(\,\sup_{\tau\in(t_1,T)} h^{-1/2}\|z'_0(\tau+h)-z'_0(\tau)\|_{L^2}\biggr)^2.$$
		Now, by \eqref{mainstrong1-0reg2} and interpolation, we have $z\in W^{3/4,p}_{loc}(0,T;H^1)$ for all $p\in(2,\infty)$, hence 
		$z'\in C^\alpha_{loc}(0,T;L^2)$ for all $\alpha\in(0,3/4)$, so that $\lim_{h\to 0}J_2=0$.
		Using that $z_0\in C([0,T];H^2)$, hence $E_0\in C([0,T])$, the assertion follows.
		
		\smallskip
		
		(ii) This follows immediately by integrating the equation in space.
	\end{proof}

	We shall finally prove Proposition~\ref{prop-pbm0}(ii).
	The $\omega$-limit set $\omega(z_0)$ is defined~by \eqref{defomegaze} with $\e=0$ and we set
	$$\tilde{\mathcal{S}}_K:=\{Z\in \mathcal{S};\ (Z,b)=K\},\quad K\in\R,$$
	where, as before, $\mathcal{S}$ is the set of steady states, i.e.~solutions of \eqref{eqstatz}.
	We use the following properties of $\omega(z_0)$.

	\begin{prop}\label{prop-pbm0b}
		Let the assumptions of Proposition~\ref{prop-pbm0} be in force.
		
		\vskip 1mm
		(i) The set $\omega(z_0)$ is a nonempty compact connected subset of $H^2(I)$.
		
		\vskip 1mm
		
		(ii) We have $\omega(z_0)\subset \tilde{\mathcal{S}}_{K_0}$, where $K_0:=(\phi,b)$.
	\end{prop}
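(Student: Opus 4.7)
The plan is to closely mirror the strategy of Lemma~\ref{prop-quasistat}, replacing the delay operator estimates by their much simpler local-in-time counterparts available from Proposition~\ref{prop-pbm0}(i) and Proposition~\ref{prop-pbm0bEn}.

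For (i), I would first establish a uniform higher-regularity bound: fixing any $p>4$ in \eqref{regul-01} and interpolating between $W^{1,p}(t,t+1;L^2)$ and $L^p(t,t+1;H^4)$ with weight $\theta=1/4$ yields $z_0\in W^{1/4,p}(t,t+1;H^3)$ uniformly for $t\ge 1$. Since $1/4>1/p$, Sobolev embedding in time gives $\sup_{t\ge 1}\|z_0(t)\|_{H^3}<\infty$, and the compact inclusion $H^3(I)\hookrightarrow H^2(I)$ then makes $\{z_0(t):t\ge 1\}$ precompact in $H^2$. Combined with the continuity of $t\mapsto z_0(t)$ in $H^2$ from \eqref{regul-0}, writing
$$\omega(z_0)=\bigcap_{n\ge 1}\overline{\{z_0(t):t\ge n\}}^{H^2}$$
presents $\omega(z_0)$ as a decreasing intersection of nonempty compact connected sets, hence is itself nonempty, compact and connected.

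For (ii), fix $Z\in\omega(z_0)$ and a sequence $t_n\to\infty$ with $z_0(t_n)\to Z$ in $H^2$. Introduce the time-translates $z_n(s,t):=z_0(s,t_n+t)$ on $[0,1]$. By Proposition~\ref{prop-pbm0}(i) they are uniformly bounded in $L^\infty(0,1;H^2)\cap L^p(0,1;H^4)\cap W^{1,p}(0,1;L^2)$, so the same interpolation together with an Aubin--Lions argument yields, along a subsequence, $z_n\to\bar z$ strongly in $C([0,1];H^2)$ (hence in $C([0,1];C^1(\overline I))$) and weakly in $W^{1,p}(0,1;L^2)$. The crucial dissipation input is Proposition~\ref{prop-pbm0bEn}(i): since $E_0$ is nonincreasing and bounded below by $\inf F=0$, we have $\int_0^\infty\!\int_I b(\partial_t z_0)^2\,ds\,dt<\infty$, which forces $\int_{t_n}^{t_n+1}\|\partial_t z_0\|_{L^2}^2\,dt\to 0$. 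Thus $\partial_t z_n\to 0$ strongly in $L^2((0,1)\times I)$, so $\bar z$ is time-independent; since $\bar z(0)=Z$, we conclude $\bar z\equiv Z$.

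I then test the weak formulation of \eqref{mainstrong1-0} for $z_n$ against an arbitrary $\psi\in H^2(I)$ (viewed as time-independent) integrated over $(0,1)$:
\begin{equation}\label{plantest}
\int_0^1\int_I\bigl(b\,\partial_t z_n\,\psi + z_n''\psi''+F'(z_n')\psi'\bigr)ds\,dt=0.
\end{equation}
The first term vanishes in the limit by the $L^2$-convergence $\partial_t z_n\to 0$; the $z_n''\psi''$ term passes by weak $L^2$-convergence; and the nonlinear term $F'(z_n')$ passes thanks to the strong $C^1$-convergence $z_n\to Z$ and smoothness of $F'$. This yields $\int_I(Z''\psi''+F'(Z')\psi')ds=0$ for every $\psi\in H^2(I)$, i.e.~$Z\in\mathcal{S}$. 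Finally, Proposition~\ref{prop-pbm0bEn}(ii) together with the $L^2$-convergence $z_0(t_n)\to Z$ and $b\in L^2(I)$ gives $(Z,b)=K_0$, so $Z\in\tilde{\mathcal{S}}_{K_0}$. The main obstacle is producing compactness strong enough to pass to the limit in the nonlinear term $F'(z_0')$; this is precisely what the higher-regularity estimate \eqref{regul-01} delivers, by upgrading the $L^\infty_t(H^2_s)$ bound to a uniform $H^3_s$ bound and thereby furnishing strong $C^1$-convergence along subsequences.
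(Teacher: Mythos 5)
Your proof is correct and follows essentially the same path as the paper's: time-translate $z_n(t)=z_0(t_n+t)$, use the finite total dissipation from Proposition~\ref{prop-pbm0bEn}(i) to force $\partial_t z_n\to 0$ in $L^2(Q)$, obtain strong compactness of $\{z_n\}$ in $C([0,1];H^2)$, and pass to the limit in the weak formulation to conclude $Z\in\mathcal{S}$, then use the conserved quantity of Proposition~\ref{prop-pbm0bEn}(ii) to pin down $(Z,b)=K_0$. The one place you genuinely deviate, and in fact improve the exposition, is the source of precompactness: the paper cites the $\e$-uniform bound \eqref{concl-compact0} for $z_\e$ (in (ii)) and says ``same argument as Lemma~\ref{prop-quasistat}(i)'' (in (i)), without spelling out the $z_0$-specific estimate; you instead derive directly from \eqref{regul-01}, by interpolating $W^{1,p}(t,t+1;L^2)\cap L^p(t,t+1;H^4)$ into $W^{1/4,p}(t,t+1;H^3)$ and using the time-Sobolev embedding (valid since $p>4$), the clean uniform bound $\sup_{t\ge 1}\|z_0(t)\|_{H^3}<\infty$, from which both the compactness in (i) and the Aubin--Lions compactness in (ii) follow self-containedly. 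This is a tidier route since it only uses properties of $z_0$ itself. Two small remarks: the interpolation parameter you call ``$\theta=1/4$'' is actually $\theta=3/4$ (giving time order $1-\theta=1/4$ and space order $4\theta=3$); and in step (ii) the extraction of a subsequential limit $\bar z$ and the verification $\bar z\equiv Z$ is a harmless but unnecessary detour—once $z_n\to Z$ strongly in $C([0,1];H^2)$ (which you already have from $z_n(0)\to Z$, $\partial_t z_n\to 0$, and compactness) you can pass to the limit in \eqref{plantest} directly, as the paper does with a single Cauchy--Schwarz application.
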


	\begin{proof}
		\smallskip
		(i) This follows from the same argument as Lemma~\ref{prop-quasistat}(i).
		
		\smallskip
		
		(ii) By Proposition~\ref{prop-pbm0bEn}(i), we have
		\be{intz0cv}
		\int_0^\infty\int_I b(s) |\partial_tz_0|^2 dsdt\le E_0(0)<\infty.
		\ee
		Let $Z\in \omega(z_0)$. There exists a sequence $t_n\to\infty$ such that $z_0(t_n)\to Z$ in $H^2(I)$.
		Set $z_n(s,t)=z_0(s,t_n+t)$ and $Q=(0,1)\times I$
		and identify $Z(s,t)=Z(s)$.
		By \eqref{intz0cv} we have 
		$$\sup_{t\in(0,1)} \|z_n(\cdot,t)-z_n(\cdot,0)\|_{L^2(I))}^2 \le \int\int_Q  |\partial_tz_n|^2 dsdt\le C\int_{t_n}^\infty\int_I b(s) |\partial_tz_0|^2 dsdt\to 0$$
		as $n\to\infty$.
		Consequently,
		$\lim_{n\to\infty}\|z_n-Z\|_{L^\infty(0,1;L^2(I))}=0$.
		Since, on the other hand, \eqref{concl-compact0} guarantees that the sequence $z_n$ is 
		compact in $L^\infty(0,1;H^2(I))$, we deduce that $\lim_{n\to\infty}\|z_n-Z\|_{L^\infty(0,1;H^2(I))}=0$.
		We may thus pass to the limit in the weak formulation of \eqref{mainstrong1-0} to obtain,
		for any $v\in H^2(I)$, 
		$$
		\begin{aligned}
			\Bigl|\int_I  \bigl(Z'' v''+ F'(Z') v'\bigr)ds\Bigr| 
			&=\Bigl|\int\int_Q  \bigl(Z'' v''+ F'(Z') v'\bigr)dsdt\Bigr| 
			=\lim_n\Bigl|\int\int_Q  \bigl(z''_n v''+ F'(z_n') v'\bigr)dsdt\Bigr| \\
			&= \lim_n\Bigl|\int\int_Q  b\partial_tz_n vdsdt\Bigr| 
			\le \lim_n\left(\int\int_Q b(s) |\partial_tz_n|^2 dsdt\right)^{1/2}\|v\|_{L^2(I)}=0.
		\end{aligned}
		$$
		Consequently, $Z\in\mathcal{S}$, hence $Z\in\mathcal{S}_{K_0}$ in view of Proposition~\ref{prop-pbm0bEn}(ii).
	\end{proof}
	
	\begin{proof}[Proof of Proposition~\ref{prop-pbm0}(ii)]
		Based on Proposition~\ref{prop-pbm0b}, 
		this follows from the same argument as Theorem~\ref{thmglob}(iii). \end{proof}
	
	\begin{rem}\label{rem-pbm0stab}
		(i) Let us justify the statement in Remark~\ref{rem-mainth}(i) about the stability of the steady states $W'=\pm 1$.
		If $\phi\in H^2$ and $\|\phi''\|_2+\|\phi'-1\|_\infty$ is sufficiently small
		(the case with $\phi'+1$ is similar), then $\phi'(0)>0$ and $\tilde E(\phi)<\eta_0:=\min(\eta_1,\eta_2)$,
		where $\tilde E, \eta_1, \eta_2$ are defined in \eqref{defeta1}, \eqref{defeta2}. 
		On the other hand, by Proposition~\ref{prop-pbm0}(ii), the solution $z_0(t)$ of \eqref{mainstrong1-0}
		converges in $H^2$ to a steady state~$Z_0$.
		Therefore, $\tilde E(Z_0)=\lim_{t\to\infty} E_0(t)\le \tilde E(\phi)<\eta_1$, 
		hence $Z_0'\equiv 1$ or $-1$. Assume for contradiction that $Z'_0=-1$.
		Then $\lim_{t\to\infty} z'(0,t)=-1$ and, since $z_0\in C([0,\infty);C^1(\bar I))$,
		there would exist $t_0>0$ such that $z'_0(0,t_0)=0$, hence
		$\eta_2\le E_0(t_0)\le  \tilde E(\phi)<\eta_1$: a contradiction. 
		\smallskip
		
		(ii) As mentioned in Remark~\ref{rem-mainth}(i), if $W$ is a steady state such that $W'\not\equiv\pm 1$,
		then $W$ is unstable. More precisely there exist initial data $\phi$ arbitrarily close to $W$ in $H^2$
		such that $z_0$ converges to a steady-state $Z_0$ such that $Z_0'\not\equiv W'$.
		
		Indeed, since $\tilde E(W)>0$, we may choose a sequence $\phi_n\in H^2$ 
		such that $\lim_{n\to\infty}$ $\|\phi_n-W\|_{H^2}=0$ and $0<\tilde E(\phi_n)<\tilde E(W)$.
		Let $z_{0,n}$ be the corresponding solution of \eqref{mainstrong1-0} and $E_{0,n}(t)$ the corresponding energy function.
		By Proposition~\ref{prop-pbm0}(ii), $z_{0,n}(t)$ converges in $H^2$ to a steady state $Z_{0,n}$.
		Therefore, $\tilde E(Z_{0,n})=\lim_{t\to\infty} E_{0,n}(t)\le \tilde E(\phi_n)<\tilde E(W)$,
		hence $Z_{0,n}'\not\equiv W'$.
	\end{rem}

\noindent {\bf Declarations of interest:} none.

\bibliographystyle{abbrvnat}

\bibliography{biblio}

\begin{thebibliography}{33}
\providecommand{\natexlab}[1]{#1}
\providecommand{\url}[1]{\texttt{#1}}
\expandafter\ifx\csname urlstyle\endcsname\relax
  \providecommand{\doi}[1]{doi: #1}\else
  \providecommand{\doi}{doi: \begingroup \urlstyle{rm}\Url}\fi

\bibitem[Alfaro et~al.(2008)Alfaro, Hilhorst, and Matano]{AlHiMa}
M.~Alfaro, D.~Hilhorst, and H.~Matano.
\newblock The singular limit of the {Allen}-{Cahn} equation and the
  {FitzHugh}-{Nagumo} system.
\newblock \emph{J. Differ. Equations}, 245\penalty0 (2):\penalty0 505--565,
  2008.
\newblock ISSN 0022-0396.
\newblock \doi{10.1016/j.jde.2008.01.014}.

\bibitem[Allen and Cahn(1979)]{AC79}
S.~M. Allen and J.~W. Cahn.
\newblock A microscopic theory for antiphase boundary motion and its
  application to antiphase domain coarsening.
\newblock \emph{Acta metallurgica}, 27\penalty0 (6):\penalty0 1085--1095, 1979.

\bibitem[{Ambrosio} et~al.(2008){Ambrosio}, {Gigli}, and {Savar\'e}]{AmGiSa}
L.~{Ambrosio}, N.~{Gigli}, and G.~{Savar\'e}.
\newblock \emph{{Gradient flows in metric spaces and in the space of
  probability measures. 2nd ed.}}
\newblock Basel: Birkh\"auser, 2nd ed. edition, 2008.
\newblock ISBN 978-3-7643-8721-1/pbk.

\bibitem[Balaeff et~al.(2006)Balaeff, Mahadevan, and
  Schulten]{balaeff2006modeling}
A.~Balaeff, L.~Mahadevan, and K.~Schulten.
\newblock Modeling dna loops using the theory of elasticity.
\newblock \emph{Physical Review E}, 73\penalty0 (3):\penalty0 031919, 2006.

\bibitem[Bijani et~al.(2006)Bijani, Radja, Mohammad-Rafiee, and
  Ejtehadi]{bijani2006anisotropic}
G.~Bijani, N.~H. Radja, F.~Mohammad-Rafiee, and M.~Ejtehadi.
\newblock Anisotropic elastic model for short dna loops.
\newblock \emph{arXiv preprint cond-mat/0605086}, 2006.

\bibitem[Bronsard and Kohn(1991)]{BK91}
L.~Bronsard and R.~V. Kohn.
\newblock Motion by mean curvature as the singular limit of {Ginzburg}-{Landau}
  dynamics.
\newblock \emph{J. Differ. Equations}, 90\penalty0 (2):\penalty0 211--237,
  1991.
\newblock ISSN 0022-0396.
\newblock \doi{10.1016/0022-0396(91)90147-2}.

\bibitem[Chen(1992)]{Ch92}
X.~Chen.
\newblock Generation and propagation of interfaces for reaction-diffusion
  equations.
\newblock \emph{J. Differ. Equations}, 96\penalty0 (1):\penalty0 116--141,
  1992.
\newblock ISSN 0022-0396.
\newblock \doi{10.1016/0022-0396(92)90146-E}.

\bibitem[de~Mottoni and Schatzman(1995)]{MS95}
P.~de~Mottoni and M.~Schatzman.
\newblock Geometrical evolution of developed interfaces.
\newblock \emph{Trans. Am. Math. Soc.}, 347\penalty0 (5):\penalty0 1533--1589,
  1995.
\newblock ISSN 0002-9947.
\newblock \doi{10.2307/2154960}.

\bibitem[Denk et~al.(2007)Denk, Hieber, and Pr{\"u}ss]{DHP}
R.~Denk, M.~Hieber, and J.~Pr{\"u}ss.
\newblock Optimal {{\(L^{p}\)}}- {{\(L^{q}\)}}-estimates for parabolic boundary
  value problems with inhomogeneous data.
\newblock \emph{Math. Z.}, 257\penalty0 (1):\penalty0 193--224, 2007.
\newblock ISSN 0025-5874.
\newblock \doi{10.1007/s00209-007-0120-9}.

\bibitem[Dipierro et~al.(2019)Dipierro, Valdinoci, and Vespri]{DiVaVe}
S.~Dipierro, E.~Valdinoci, and V.~Vespri.
\newblock Decay estimates for evolutionary equations with fractional
  time-diffusion.
\newblock \emph{J. Evol. Equ.}, 19\penalty0 (2):\penalty0 435--462, 2019.
\newblock ISSN 1424-3199.
\newblock \doi{10.1007/s00028-019-00482-z}.
\newblock URL
  \url{https://link.springer.com/article/10.1007/s00028-019-00482-z}.

\bibitem[Durdiev and Turdiev(2024)]{DuTu}
D.~K. Durdiev and H.~H. Turdiev.
\newblock Inverse coefficient problem for a time-fractional wave equation with
  initial-boundary and integral type overdetermination conditions.
\newblock \emph{Math. Methods Appl. Sci.}, 47\penalty0 (6):\penalty0
  5329--5340, 2024.
\newblock ISSN 0170-4214.
\newblock \doi{10.1002/mma.9867}.

\bibitem[Gripenberg et~al.(1990)Gripenberg, Londen, and
  Staffans]{Gripenberg_ea}
G.~Gripenberg, S.-O. Londen, and O.~Staffans.
\newblock \emph{Volterra integral and functional equations}, volume~34 of
  \emph{Encyclopedia of Mathematics and its Applications}.
\newblock Cambridge University Press, Cambridge, 1990.
\newblock ISBN 0-521-37289-5.

\bibitem[{Leoni}(2017)]{Leoni.Book}
G.~{Leoni}.
\newblock \emph{{A first course in Sobolev spaces}}, volume 181.
\newblock Providence, RI: American Mathematical Society (AMS), 2017.
\newblock ISBN 978-1-4704-2921-8; 978-1-4704-4226-2.
\newblock \doi{10.1090/gsm/181}.

\bibitem[Manhart et~al.(2015)Manhart, Oelz, Schmeiser, and
  Sfakianakis]{MR3385931}
A.~Manhart, D.~Oelz, C.~Schmeiser, and N.~Sfakianakis.
\newblock An extended {F}ilament {B}ased {L}amellipodium {M}odel produces
  various moving cell shapes in the presence of chemotactic signals.
\newblock \emph{J. Theoret. Biol.}, 382:\penalty0 244--258, 2015.
\newblock ISSN 0022-5193.
\newblock \doi{10.1016/j.jtbi.2015.06.044}.
\newblock URL \url{http://dx.doi.org/10.1016/j.jtbi.2015.06.044}.

\bibitem[Meyries and Veraar(2012)]{MV}
M.~Meyries and M.~Veraar.
\newblock Sharp embedding results for spaces of smooth functions with power
  weights.
\newblock \emph{Stud. Math.}, 208\penalty0 (3):\penalty0 257--293, 2012.
\newblock ISSN 0039-3223.
\newblock \doi{10.4064/sm208-3-5}.

\bibitem[Mili{\v{s}}i{\'c}(2020)]{Mi.5}
V.~Mili{\v{s}}i{\'c}.
\newblock From delayed and constrained minimizing movements to the harmonic map
  heat equation.
\newblock \emph{J. Funct. Anal.}, 279\penalty0 (2):\penalty0 50, 2020.
\newblock ISSN 0022-1236.
\newblock \doi{10.1016/j.jfa.2020.108520}.
\newblock Id/No 108520.

\bibitem[Mili{\v{s}}i{\'c} and Oelz(2011)]{MiOel.1}
V.~Mili{\v{s}}i{\'c} and D.~Oelz.
\newblock On the asymptotic regime of a model for friction mediated by
  transient elastic linkages.
\newblock \emph{J. Math. Pures Appl. (9)}, 96\penalty0 (5):\penalty0 484--501,
  2011.
\newblock ISSN 0021-7824.
\newblock \doi{10.1016/j.matpur.2011.03.005}.

\bibitem[Mili{\v{s}}i{\'c} and Oelz(2015)]{MiOel.2}
V.~Mili{\v{s}}i{\'c} and D.~Oelz.
\newblock On a structured model for the load dependent reaction kinetics of
  transient elastic linkages.
\newblock \emph{SIAM J. Math. Anal.}, 47\penalty0 (3):\penalty0 2104--2121,
  2015.
\newblock ISSN 0036-1410.
\newblock \doi{10.1137/130947052}.
\newblock URL \url{http://dx.doi.org/10.1137/130947052}.

\bibitem[Mili{\v{s}}i{\'c} and Oelz(2016)]{MiOel.3}
V.~Mili{\v{s}}i{\'c} and D.~Oelz.
\newblock Tear-off versus global existence for a structured model of adhesion
  mediated by transient elastic linkages.
\newblock \emph{Commun. Math. Sci.}, 14\penalty0 (5):\penalty0 1353--1372,
  2016.
\newblock ISSN 1539-6746.
\newblock \doi{10.4310/CMS.2016.v14.n5.a7}.
\newblock URL \url{http://dx.doi.org/10.4310/CMS.2016.v14.n5.a7}.

\bibitem[Milišić and Oelz(2018)]{MiOel.4}
V.~Milišić and D.~Oelz.
\newblock Space dependent adhesion forces mediated by transient elastic
  linkages: New convergence and global existence results.
\newblock \emph{Journal of Differential Equations}, 265\penalty0 (12):\penalty0
  6049--6082, 2018.
\newblock ISSN 0022-0396.
\newblock \doi{https://doi.org/10.1016/j.jde.2018.07.007}.
\newblock URL
  \url{https://www.sciencedirect.com/science/article/pii/S0022039618303711}.

\bibitem[Nguyen et~al.(2022)Nguyen, Caraballo, and Tuan]{NguCaTu}
A.~T. Nguyen, T.~Caraballo, and N.~H. Tuan.
\newblock On the initial value problem for a class of nonlinear biharmonic
  equation with time-fractional derivative.
\newblock \emph{Proc. R. Soc. Edinb., Sect. A, Math.}, 152\penalty0
  (4):\penalty0 989--1031, 2022.
\newblock ISSN 0308-2105.
\newblock \doi{10.1017/prm.2021.44}.

\bibitem[Oelz(2011)]{oelz.sema}
D.~Oelz.
\newblock On the curve straightening flow of inextensible, open, planar curves.
\newblock \emph{S\(\vec{\text{e}}\)MA J.}, 54:\penalty0 5--24, 2011.
\newblock ISSN 2254-3902.
\newblock \doi{10.1007/BF03322585}.

\bibitem[Oelz(2014)]{Oelz.CMS}
D.~Oelz.
\newblock Convergence of the penalty method applied to a constrained curve
  straightening flow.
\newblock \emph{Commun. Math. Sci.}, 12\penalty0 (4):\penalty0 601--621, 2014.
\newblock ISSN 1539-6746.
\newblock \doi{10.4310/CMS.2014.v12.n4.a1}.

\bibitem[Oelz and Schmeiser(2010{\natexlab{a}})]{OeSch}
D.~Oelz and C.~Schmeiser.
\newblock Derivation of a model for symmetric lamellipodia with instantaneous
  cross-link turnover.
\newblock \emph{Archive for Rational Mechanics and Analysis}, 198\penalty0
  (3):\penalty0 963--980, 2010{\natexlab{a}}.
\newblock \doi{10.1007/s00205-010-0304-z}.

\bibitem[Oelz and Schmeiser(2010{\natexlab{b}})]{oelz_schmei_book}
D.~Oelz and C.~Schmeiser.
\newblock How do cells move? {Mathematical} modeling of cytoskeleton dynamics
  and cell migration.
\newblock In \emph{Cell mechanics. From single scale-based models to multiscale
  modeling}, pages 133--157. Boca Raton, FL: CRC Press, 2010{\natexlab{b}}.
\newblock ISBN 978-1-4200-9454-1; 978-1-4200-9455-8.

\bibitem[Oelz et~al.(2008)Oelz, Schmeiser, and Small]{OeSchVi}
D.~Oelz, C.~Schmeiser, and V.~Small.
\newblock Modelling of the actin-cytoskeleton in symmetric lamellipodial
  fragments.
\newblock \emph{Cell Adhesion and Migration}, 2:\penalty0 117--126, 2008.

\bibitem[Perthame(2007)]{Perth.Book}
B.~Perthame.
\newblock \emph{Transport equations in biology.}
\newblock Front. Math. Basel: Birkh{\"a}user, 2007.
\newblock ISBN 3-7643-7841-7.

\bibitem[Schwalm(2015)]{Schwalm.Book}
W.~A. Schwalm.
\newblock \emph{Lectures on selected topics in mathematical physics: elliptic
  functions and elliptic integrals}.
\newblock IOP Concise Phys. San Rafael, CA: Morgan \& Claypool Publishers;
  London: IOP Publishing, 2015.
\newblock ISBN 978-1-68174-166-6; 978-1-68174-230-4.
\newblock \doi{10.1088/978-1-6817-4230-4}.

\bibitem[Sickel(1998)]{Si}
W.~Sickel.
\newblock Conditions on composition operators which map a space of
  {Triebel}-{Lizorkin} type in a {Sobolev} space. {The} case {{\(1<s<n/p\)}}.
  {II}.
\newblock \emph{Forum Math.}, 10\penalty0 (2):\penalty0 199--231, 1998.
\newblock ISSN 0933-7741.
\newblock \doi{10.1515/form.10.2.199}.

\bibitem[Triebel(2010)]{Tr1}
H.~Triebel.
\newblock \emph{Theory of function spaces}.
\newblock Mod. Birkh{\"a}user Class. Basel: Birkh{\"a}user, reprint of the 1983
  original edition, 2010.
\newblock ISBN 978-3-0346-0415-4; 978-3-0346-0416-1.
\newblock \doi{10.1007/978-3-0346-0416-1}.

\bibitem[Triebel(2013)]{Tr2}
H.~Triebel.
\newblock \emph{The structure of functions}.
\newblock Mod. Birkh{\"a}user Class. Basel: Birkh{\"a}user, reprint of the 2001
  hardback ed. edition, 2013.
\newblock ISBN 978-3-0348-0568-1; 978-3-0348-0569-8.
\newblock \doi{10.1007/978-3-0348-0569-8}.

\bibitem[van Bockstal et~al.(2023)van Bockstal, Hendy, and Zaky]{BoHeZa}
K.~van Bockstal, A.~S. Hendy, and M.~A. Zaky.
\newblock Space-dependent variable-order time-fractional wave equation:
  existence and uniqueness of its weak solution.
\newblock \emph{Quaest. Math.}, 46\penalty0 (8):\penalty0 1695--1715, 2023.
\newblock ISSN 1607-3606.
\newblock \doi{10.2989/16073606.2022.2110959}.
\newblock URL \url{https://doi.org/10.2989/16073606.2022.2110959}.

\bibitem[Zheng and Wang(2021)]{ZW21}
X.~Zheng and H.~Wang.
\newblock A time-fractional diffusion equation with space-time dependent
  hidden-memory variable order: analysis and approximation.
\newblock \emph{BIT}, 61\penalty0 (4):\penalty0 1453--1481, 2021.
\newblock ISSN 0006-3835.
\newblock \doi{10.1007/s10543-021-00861-4}.

\end{thebibliography}
\end{document}